\newtheorem{theorem}{Theorem}[section]
\newtheorem{lemma}[theorem]{Lemma}
\newtheorem{proposition}[theorem]{Proposition}
\newtheorem{remark}[theorem]{Remark}
\newtheorem{example}[theorem]{Example}
\def\thetheorem{\thesection.\arabic{theorem}}
\def\thesection{\arabic{section}}
\def\theequation {\thesection.\arabic{equation}}
\def\beq{\begin{equation}\displaystyle}
\def\eeq{\end{equation}}
\def\bel{\begin{equation} \displaystyle \begin{array}{l} }
\def\eel{\end{array} \end{equation} }
\def\bell{\begin{equation} \displaystyle \begin{array}{ll}  }
\def\eell{\end{array} \end{equation} }
\def\bea{\begin{eqnarray}}
\def\eea{\end{eqnarray} }
\def\bean{\begin{eqnarray*}}
\def\eean{\end{eqnarray*} }
\def\m1n{M_{1,\Delta x}}
\newenvironment{proof}{\noindent{\bf Proof.~}}
{{\mbox{}\hfill {\small \fbox{}}\\}}
\renewcommand\appendix{\bigskip {\noindent \Large \bf Appendix}
  \setcounter{section}{0}%
  \setcounter{subsection}{0}%
\setcounter{equation}{0}%
\setcounter{theorem}{0}%
\def\thetheorem{A.\arabic{theorem}}
\def\theequation {A.\arabic{equation}}}
\newcommand{\dv}{\mathop{\rm div}\nolimits}
\def\NN{\mathbb{N}}
\def\RR{\mathbb{R}}
\def\ZZ{\mathbb{Z}}
\def\ds{\displaystyle}
\def\bs{\bigskip}
\def\eps{\varepsilon}
\def\pa{\partial}
\def\calM{{\mathcal M}}
\def\calP{{\mathcal P}}
\def\calT{{\mathcal T}}
\def\calV{{\mathcal V}}
\def\smes{{\cal S}_{\cal M}}
\def\achapo{\widehat{a}}
\def\nabWchapo{\widehat{\nabla W}}
\begin{document}

\title{Convergence analysis of upwind type schemes for the aggregation equation with pointy potential}

\author{F. Delarue\thanks{Laboratoire J.-A. Dieudonn\'e, 
UMR CNRS 7351, 
Univ. Nice, Parc Valrose, 06108 Nice Cedex 02, France. Email: \texttt{delarue@unice.fr}}, 
F. Lagouti\`ere\thanks{Univ Lyon,  Universit\'e Claude Bernard Lyon 1,  CNRS UMR 5208,  Institut Camille Jordan,  43 blvd. du 11 novembre 1918, F-69622 Villeurbanne cedex, France, Email: \texttt{lagoutiere@math.univ-lyon1.fr}},
N. Vauchelet\thanks{Universit\'e Paris 13, Sorbonne Paris Cit\'e, CNRS UMR 7539, Laboratoire Analyse G\'eom\'etrie et Applications, 93430 Villetaneuse, France, Email: \texttt{vauchelet@math.univ-paris13.fr}}
}

\maketitle

\begin{abstract}
A numerical analysis of upwind type schemes for the nonlinear
nonlocal aggregation equation is provided.
In this approach, the aggregation equation is interpreted as a conservative transport 
equation driven by a nonlocal nonlinear velocity field with low regularity. 
In particular, we allow the interacting potential to be pointy, 
in which case the velocity field may have discontinuities.
Based on recent results of existence and uniqueness of a Filippov flow for this type of equations, we study
an upwind finite volume numerical scheme 
and we prove that it is convergent at order
$1/2$ in Wasserstein distance. The paper is illustrated by numerical simulations that indicate that this convergence order should be optimal. 
\end{abstract}

\bs

{\bf Keywords: } Aggregation equation, upwind finite volume scheme, convergence order, measure-valued solution.

{\bf 2010 AMS subject classifications: } 35B40, 35D30, 35L60, 35Q92, 49K20.

\bs

\section{Introduction}

This paper is devoted to the numerical approximation of measure valued solutions to the so-called aggregation equation in space dimension $d$. This equation reads
\beq\label{EqInter}
\pa_t\rho = \dv\big((\nabla_x W*\rho) \rho\big) , \qquad t>0,\quad x\in\RR^d,
\eeq
with the initial condition $\rho(0,\cdot)=\rho^{ini}$. Here, $W$ plays the role of an interaction potential whose gradient $\nabla_x W(x-y)$ measures the relative force exerted by a unit mass localized at a point $y$ onto a unit mass located at a point $x$.

This system appears in many applications in physics and population dynamics.
In the framework of granular media, equation \eqref{EqInter} is used to
describe the large time dynamics of inhomogeneous kinetic models, see \cite{benedetto,CCV,Toscani}.
Models of crowd motion with a nonlinear term of the form $\nabla_xW*\rho$
are also addressed in \cite{pieton,pieton2}.
In population dynamics, \eqref{EqInter} provides a biologically meaningful description of aggregative phenomena. 
For instance, the description of the collective migration of cells by swarming
leads to such a kind of PDEs with non-local interaction, see e.g. \cite{morale,okubo,topaz}.
Another example is the modelling of bacterial chemotaxis. In this framework, the quantity $S=W*\rho$ is the chemoattractant concentration, which is a substance emitted by bacteria allowing them to interact with one another. The dynamics can be macroscopically modelled by the Patlak-Keller-Segel system \cite{keller,patlack}. In the kinetic framework, 
the most frequently used model is the Othmer-Dunbar-Alt system,  the hydrodynamic limit of which leads to the aggregation equation \eqref{EqInter}, see \cite{dolschmeis,filblaurpert,NoDEA}. In many of these examples, the potential $W$ is usually mildly singular, i.e. $W$ has a weak singularity at the origin. Because of this low regularity, smooth solutions of such systems may blow-up in finite time, see e.g. \cite{Li,BV,Bertozzi2,Carrillo}. In the latter case, finite time concentration may be regarded as a very simple mathematical way to account for aggregation of individuals, as opposed to diffusion. 

Since finite time blow-up of smooth solutions may occur and since equation \eqref{EqInter} conserves mass, a natural framework to
study the existence of global in time solutions is to work in the space of probability measures. In this regard, two strategies have been proposed in the literature. In \cite{Carrillo}, the aggregation equation is seen as a gradient flow taking values in the Wasserstein space and minimizing the interaction energy. In \cite{NoDEA,GF_dual,CJLV, lava}, this system is considered as a conservative transport equation with velocity field $\nabla_x W*\rho$. Then a unique flow, say $Z=(Z(t,\cdot))_{t \geq 0}$, can be constructed, hence allowing to define the solution as a pushforward measure by the flow, namely $\rho=(\rho(t)=Z(t,\cdot)_\# \rho^{ini})_{t \ge 0}$. When the singularity of the potential is stronger than the mild form described above, 
such a construction 
has been achieved 
in the radially symmetric case  in \cite{Andrea_c_toaa}, but uniqueness is
then lacking. Actually, 
the assumptions on the potential $W$ that are needed to ensure the well-posedness of the equation in the space of measure valued 
solutions require a certain convexity property of the potential
that allows only for a mild singularity at the origin.
More precisely, we assume that the interaction potential $W\,:\,\RR^d\to\RR$ satisfies the following properties:
\begin{itemize}
\item[{\bf (A0)}] $W(x)=W(-x)$ and $W(0)=0$;
\item[{\bf (A1)}] $W$ is $\lambda$-convex for some 
{$\lambda  \in \RR$}, i.e.
  $W(x)-\frac{\lambda}{2}|x|^2$ is convex;
\item[{\bf (A2)}] $W\in C^1(\RR^d\setminus\{0\})$;
\item[{\bf (A3)}] $W$ is Lipschitz-continuous.
\end{itemize}
Such a potential will be referred to as a {\it pointy} potential. 
Typical examples are fully attractive potentials $W(x)=1-e^{-|x|}$, which is $-1$-convex, and $W(x) = |x|$, which is $0$-convex.
Notice that the Lipschitz-continuity of the potential allows to bound the velocity field:
there exists a nonnegative constant $w_\infty$ such that for all $x\neq 0$,
\begin{equation}
\label{borngradW}
|\nabla W(x)| \leq w_\infty.
\end{equation}
Observe also that {\bf (A3)} forces $\lambda$ in \textbf{\bf (A1)} to be non-positive, as otherwise $W$ would be at least of quadratic growth, whilst {\bf (A3)} forces it to be at most of linear growth. 
However, we shall sometimes discard \textbf{(A3)}, when the initial datum is compactly supported. In this case, as $W - \lambda |x|^2/2$ is convex, it is locally Lipschitz-continuous, so that $W$ is locally Lipschitz-continuous, what will be sufficient for compactly supported initial data. 
In that case it perfectly makes sense to assume $\lambda >0$ in \textbf{\bf (A1)}. For numerical analysis, we will assume in this case that the potential is radial, that is to say that $W$ is a function of the sole scalar $|x|$, $W(x) = \mathcal{W}(|x|)$. 

Although very accurate numerical schemes have been developped to study the blow-up profile for smooth solutions, see \cite{Huang1,Huang2},
very few numerical schemes have been proposed to simulate the behavior of solutions to the aggregation equation after blow-up. 
The so-called sticky particles method was shown to be convergent in \cite{Carrillo} 
and used to obtain qualitative properties of the solutions such as the time of  total collapse. However, this method is not so practical to catch the behavior of the solutions after blow-up in dimension $d$ larger than one. In dimension $d=1$, this question has been addressed in \cite{NoDEA}. In higher dimension, particle methods have been recently proposed and studied in \cite{CB,Freda}, 
but only the convergence of smooth solutions, before the blowup time, has been proved. Finite volume schemes have also been developed. In \cite{sinum}, the authors propose a finite volume scheme to approximate the behavior of the solution to the aggregation equation \eqref{EqInter} after blow-up and prove that it is convergent. A finite volume method for a large class of PDEs including in particular \eqref{EqInter} has been also proposed in \cite{CCH}, but no convergence result has been given. Finally, a finite volume scheme of Lax-Friedrichs type for general measures as initial data has been introduced and investigated in \cite{CJLV}.
Numerical simulations of solutions in dimension greater than one have been obtained, allowing to observe the behavior after blow-up. Moreover, convergence towards measure valued solutions has been proved. However, no estimate on the order of convergence has been established so far. In the current work, we provide a precise estimate of the order of convergence in Wasserstein distance for an upwind type scheme. This scheme is based on an idea introduced in \cite{NoDEA} and used later on in \cite{sinum,CJLV}. It consists in discretizing properly the macroscopic velocity so that its product with the measure solution $\rho$ is well-defined. In this paper, we introduce an upwind scheme for which this product is treated accurately, and we prove its convergence at order $1/2$ in Wasserstein distance (the definition of which is recalled below).

For a given velocity field, the study of the order of convergence for the finite volume
upwind scheme for the transport equation has received a lot of attention. This scheme is known to be first order convergent in $L^\infty$ norm for any smooth initial data in $C^2(\RR^d)$ and for well-suited meshes, provided a standard stability condition (Courant-Friedrichs-Lewy condition) holds, see \cite{bouche}. However, this order of convergence falls down to $1/2$ in $L^p$ norm when considering non-smooth initial data or more general meshes. This result has been first proved in the Cartesian framework by Kuznetsov in \cite{Kuznetsov}. In \cite{Despres}, a $1/2$ order estimate in the $L^\infty([0,T],L^2(\RR^d))$ norm for $H^2(\RR^d)$ initial data has been established.  Finally in \cite{MV,caniveau}, a $1/2$ order estimate in $L^1$ has been proved for initial data in $L^1(\RR^d)\cap BV(\RR^d)$, whilst, for Lipschitz-continuous initial data, an estimate of order $1/2-\eps$ in $L^\infty$ for any $\eps>0$ has been obtained in \cite{M,caniveau}. We emphasize that the techniques used in \cite{M,MV} and \cite{caniveau} are totally different.
In the former, the strategy of proof is based on entropy estimates, 
whereas in the latter, the proof
relies on the construction and the analysis of stochastic characteristics for the numerical scheme.
Finally, when the velocity field is only $L^\infty$ and one-sided Lipschtiz-continuous, solutions of the conservative transport equation are defined only in 
the sense of measures. In this regard, Poupaud and Rascle \cite{PoupaudRascle} have proved that
solutions of the conservative transport equation could be defined as the pushforward of the initial condition 
by a flow of characteristics. A stability estimate for such solutions has been stated later in \cite{Bianchini}.
In dimension $d=1$, these solutions, as introduced in 
\cite{PoupaudRascle},
 are equivalent to duality solutions, as defined 
in \cite{bj1}. Numerical investigations may be found in \cite{GJ}.
In such a framework with a low regularity, numerical analysis requires to 
work with a sufficiently weak topology, which is precisely what 
has been done in \cite{DLV}. Therein, the convergence at order $1/2$ of a finite volume upwind scheme 
has been shown 
in Wasserstein distance by means of a
stochastic characteristic method, as done in 
\cite{caniveau}.
Observe also that, recently, 
such an approach has been successfully used in \cite{schlichting} for 
the numerical analysis of the upwind scheme 
for the transport equation with rough coefficients.
In the current work, we adapt the strategy initiated in \cite{DLV} to prove the convergence at order $1/2$ of an upwind scheme for the aggregation equation for which the velocity field depends on the solution in a nonlinear way.
We will strongly use the fact that, as mentioned above, measure valued solutions of \eqref{EqInter} are constructed by pushing forward the initial condition by an $\RR^d$-valued flow.
Noticeably, we entirely reformulate the stochastic approach used in \cite{DLV} by means of analytical tools. In the end, our proof is completely deterministic. Although using analytical instead of probabilistic arguments do not change the final result (neither nor the general philosophy of the proof), it certainly makes the whole more accessible for the reader. As we pointed out, the 
key fact in \cite{DLV} is to represent the scheme through a Markov chain; here, 
the main idea is to use the sole transition kernel of the latter Markov chain to couple 
the measure-valued numerical solution at two consecutive times (and hence to bypass any use of the Markov chain itself). We refer to Remark \ref{comparison} below for more details.

The outline of the paper is the following. 
In the next section, we introduce the notations and recall the theory for the existence 
of a measure solution to \eqref{EqInter}. Then we present the upwind scheme and state the main result: the scheme is convergent at order $1/2$. In case when the potential $W$ is strictly convex and radially symmetric and the initial condition has a bounded support, the rate is claimed to be uniform in time. 
Section \ref{sec:num} is devoted to the properties of the scheme. 
The proof of the main result for a Cartesian grid mesh 
is presented in Section \ref{sec:ordre}.
In Section \ref{sec:unstruct}, we explain briefly how to extend our result to simplicial meshes. Finally, numerical illustrations are given in Section \ref{sec:sim}.
In particular, we show that the order of convergence is optimal and we provide several 
numerical simulations in which we recover the behavior of the solutions after blow-up time.

\section{Notations and main results}

\subsection{Notations}

Throughout the paper, we will make use of the following notations.
We denote by $C_0(\RR^d)$ the space of continuous functions from $\RR^d$ to $\RR$ that tend to $0$ at $\infty$.
We denote by $\calM_b(\RR^d)$ the space of Borel signed measures whose total variation is finite.
For $\rho\in {\cal M}_{b}(\RR^d)$, we call $|\rho|(\RR^d)$ its total variation.
The space $\calM_b(\RR^d)$ is equipped with the weak topology $\sigma({\cal M}_b(\RR^d),C_0(\RR^d))$.
For $T>0$, we let $\smes :=C([0,T];{\cal M}_b(\RR^d)-\sigma({\cal M}_b(\RR^d),C_0(\RR^d)))$.
For $\rho$ a measure in $\calM_b(\RR^d)$ and $Z$ a measurable map, 
we denote $Z_\#\rho$ the pushforward measure of $\rho$ by $Z$; 
it satisfies, for any continuous function $\phi$,
$$
\int_{\RR^d} \phi(x)\, Z_\#\rho(dx) = \int_{\RR^d} \phi(Z(x))\,\rho(dx).
$$
We call $\calP(\RR^d)$ the subset of $\calM_b(\RR^d)$ of probability measures.
We define the space of probability measures with finite
second order moment by
$$
\calP_2(\RR^d) := \left\{\mu \in \calP(\RR^d),\  \int_{\RR^d} |x|^2 \mu(dx) <\infty\right\}.
$$
Here and in the following, $|\cdot|^2$ stands for the square Euclidean norm, and $\langle\cdot,\cdot\rangle$ for the Euclidean inner product.
The space ${\mathcal P}_{2}(\RR^d)$ is equipped with the Wasserstein distance $d_W$ defined by (see e.g. \cite{Ambrosio,Villani1, Villani2, Filippo c touo})
\beq\label{defWp}
d_W(\mu,\nu) := \inf_{\gamma\in \Gamma(\mu,\nu)} \left\{\int_{\RR^d\times \RR^d} |y-x|^2\,\gamma(dx,dy)\right\}^{1/2}
\eeq
where $\Gamma(\mu,\nu)$ is the set of measures on $\RR^d\times\RR^d$ with marginals $\mu$ and $\nu$, i.e.
\begin{align*}
\Gamma(\mu,\nu) = \left\{ \gamma\in \calP_2(\RR^d\times\RR^d); \ \forall\, \xi\in C_0(\RR^d), \right. &
\int \xi(y_1)\gamma(dy_1,dy_2) = \int \xi(y_1) \mu(dy_1), \\
& \left.\int \xi(y_2)\gamma(dy_1,dy_2) = \int \xi(y_2) \nu(dy_2) \right\}.
\end{align*}
By a minimization argument, we know that 
the infimum in the definition of $d_{W}$ is actually a minimum.
A measure that realizes the minimum in the definition \eqref{defWp}
of $d_W$ is called an {\it optimal plan}, the set of which is denoted by $\Gamma_0(\mu,\nu)$.
Then, for all $\gamma_0\in \Gamma_0(\mu,\nu)$, we have
$$
d_W(\mu,\nu)^2= \int_{\RR^d\times \RR^d} |y-x|^2\,\gamma_0(dx,dy).
$$

We will make use of the following properties of the Wasserstein distance.
Given $\mu \in {\mathcal P}_{2}(\RR^d)$ and two $\mu$-square integrable Borel measurable maps $X,Y:\RR^d\to \RR^d$, we have the inequality
\begin{equation*}
d_W(X_\#\mu,Y_\#\mu) \leq \|X-Y\|_{L^2(\mu)}.
\end{equation*}
It holds because $\pi=(X,Y)_\# \mu\in \Gamma(X_\#\mu,Y_\#\mu)$ and 
$\int_{\RR^d\times \RR^d} |x-y|^2\,\pi(dx,dy)=\|X-Y\|_{L^2(\mu)}^2$.

\subsection{Existence of a unique flow}

In this section, we recall the existence and uniqueness result for the aggregation equation \eqref{EqInter} obtained in \cite{CJLV} (and extend it a bit for non-globally Lipschitz-continuous potentials).
For $\rho \in C([0,T];\calP_2(\RR^d))$, we define the velocity field $\widehat{a}_{\rho}$ by
\beq\label{achapo}
\achapo_{\rho}(t,x)
:=
 -\int_{\RR^d} \nabWchapo(x-y) \rho(t,dy)\,,
\eeq
where we have used the notation
$$
\nabWchapo(x) := \left\{
\begin{array}{ll} \nabla W(x), \qquad & \mbox{ for } x\neq 0, 
\\
0, & \mbox{ for } x=0. \end{array}
\right.
$$
Due to the $\lambda$-convexity of $W$, see {\bf (A2)}, we deduce that, for all 
$x$, $y$ in $\RR^d\setminus \{0\}$, 
\beq\label{lambdaconv}
\langle \nabla W(x)-\nabla W(y) , x-y\rangle \geq \lambda |x-y|^2.
\eeq
Moreover, since $W$ is even, $\nabla W$ is odd and by taking $y=-x$ in
\eqref{lambdaconv}, we deduce that inequality \eqref{lambdaconv} still
holds for $\nabWchapo$, even when $x$ or $y$ vanishes:
\beq
\label{lambdaconvWchapo}
\forall\, x,y\in\RR^d, \qquad
\langle\nabWchapo(x)-\nabWchapo(y),x-y\rangle \geq \lambda |x-y|^2.
\eeq
This latter inequality provides a one-sided Lipschitz-continuity (OSL) estimate 
for the velocity field $\achapo_\rho$ defined in \eqref{achapo}, i.e. we have
\begin{equation*}
\forall\, x,y\in\RR^d, \ t \geq 0, \qquad
\bigl\langle \achapo_{\rho}(t,x)-\achapo_{\rho}(t,y), x-y\bigr\rangle \leq -\lambda |x-y|^2.
\end{equation*}
We recall that, for a velocity field $b\in L^\infty([0,+\infty);L^\infty(\RR^d))^d$ satisfying an OSL estimate, i.e. 
$$
\forall\, x,y\in \RR^d,\ t \geq 0, \qquad
\langle b(t,x)-b(t,y),x-y\rangle \leq \alpha(t) |x-y|^2,
$$ 
for $\alpha\in L^1_{loc}([0,+\infty))$,
it has been established in \cite{Filippov} that a Filippov characteristic flow
could be defined. For $s\geq 0$ and $x\in \RR^d$, a Filippov characteristic
starting from $x$ at time $s$ is defined
as a continuous function $Z(\cdot;s,x)\in C([s,+\infty);\RR^d)$ 
such that $\frac{\pa}{\pa t}Z(t;s,x)$ exists 
for a.e. $t\in[s,+\infty)$ and satisfies $Z(s;s,x)=x$ together with the differential inclusion
$$
\frac{\pa}{\pa t} Z(t;s,x) \in \bigl\{ \textrm{\rm Convess}\bigl(\achapo_{\rho}\bigr)(t,\cdot)\bigr\}(Z(t;s,x)),
\qquad \textrm{\rm for a.e.} \quad t \geq s. 
$$
In this definition, $\{\textrm{\rm Convess}(\achapo_{\rho})(t,\cdot) \}(x)$ 
denotes the essential convex hull of the vector field 
$\achapo_{\rho}(t,\cdot)$ at $x$. We remind briefly the definition for the sake of completeness (see 
\cite{Filippov,AubinCellina} for more details). We denote by $\textrm{\rm Conv}(E)$ the classical convex
hull of a set $E \subset \RR^d$, i.e., the smallest closed convex set containing $E$. 
Given the vector field 
$\achapo_{\rho}(t,\cdot):\RR^d\rightarrow\RR^d$, its essential convex hull at point $x$ is defined as
$$
\bigl\{ \textrm{\rm Convess} \bigl(\achapo_{\rho} \bigr)(t,\cdot) \bigr\}(x) :=\bigcap_{r>0} \bigcap_{N\in \mathcal{N}_0} 
\textrm{\rm Conv}\bigl[ \achapo_{\rho}\bigl( t, B(x,r)\setminus N\bigr)\bigr]\,,
$$
where $\mathcal{N}_0$ is the set of zero Lebesgue measure sets.

Moreover, we have the semi-group property: for any $t,\tau,s \in [0,+\infty)$ such that $t \geq \tau \geq s$ and 
$x\in \RR^d$,
\begin{equation}
\label{eq:characteristics}
Z(t;s,x)=Z(\tau;s,x)+\int_{\tau}^t \achapo_{\rho}\bigl(\sigma,Z(\sigma;s,x)\bigr)\,d\sigma.
\end{equation}
From now on, we will make use of the notation $Z(t,x)=Z(t;0,x)$.
Using this characteristic, it has been established in \cite{PoupaudRascle} that solutions to the
conservative transport equation with a given bounded and one-sided Lipschitz-continuous velocity field could be defined as the pushforward of the initial condition by the Filippov characteristic flow.
Based on this approach, existence and uniqueness of solutions
to \eqref{EqInter}
 defined by a Filippov flow
has been established in \cite{CJLV}. More precisely the statement reads:

\begin{theorem}\label{Exist}\cite[Theorem 2.5 and 2.9]{CJLV}
(i) Let $W$ satisfy assumptions {\bf (A0)--(A3)} 
and let $\rho^{ini}$ be given in $\calP_2(\RR^d)$.
Then, there exists a unique solution $\rho \in C([0,+\infty);\calP_2(\RR^d))$
satisfying, in the sense of distributions, the aggregation equation
\begin{equation}
\label{eq:agreg:TH}
\pa_t \rho + \dv\bigl(\achapo_{\rho} \rho \bigr) = 0, \qquad \rho(0,\cdot)=\rho^{ini},
\end{equation}
where $\achapo_{\rho}$ is defined by \eqref{achapo}.

This solution may be represented as the family of pushforward measures 
$(\rho(t):=Z_{\rho}(t,\cdot){}_\# \rho^{ini})_{t \geq 0}$ 
where $(Z_{\rho}(t,\cdot))_{t \geq 0}$ is the unique Filippov characteristic flow associated to the velocity
field $\achapo_{\rho}$.

Moreover, the flow $Z_{\rho}$ is Lipschitz-continuous and we have
\[
\sup_{x,y\in \RR^d, \, x \not = y} 
\frac{\vert Z_{\rho}(t,x) - Z_{\rho}(t,y) \vert}{\vert x- y \vert}
\leq e^{{\vert \lambda \vert}t}, \quad t \geq 0.
\]

At last, if $\rho$ and  $\rho'$
are the respective solutions of 
\eqref{eq:agreg:TH}
with $\rho^{ini}$ and $\rho^{ini,\prime}$ as initial conditions in ${\mathcal P}_{2}(\RR^d)$, 
then 
\[
d_{W}(\rho(t),\rho'(t)) \leq e^{\vert \lambda \vert t}
d_{W}(\rho^{ini},\rho^{ini,\prime}), \qquad t \geq 0.  
\]
(ii) Let $W$ satisfy \textbf{\bf (A0)}--\textbf{\bf (A2)} and be radial, $\lambda$ be (strictly) positive and let $\rho^{ini}$ be given in $\calP_2(\RR^d)$ with compact support included in $B_\infty(M_1,R)$, where $M_1$ is the first moment of $\rho^{ini}$ ({\em i.e.} its center of mass) and $B_\infty(M_1,R)$ the closed ball for the infinite norm on $\RR^d$ centered at $M_1$ with radius $R$. Then, there exists a unique solution $\rho \in C([0,+\infty);\calP_2(\RR^d))$ with support included in $B_\infty(M_1,R)$ satisfying, in the sense of distributions, the aggregation equation \eqref{eq:agreg:TH} where $\achapo_{\rho}$ is defined by \eqref{achapo}.

Moreover, the flow $Z_{\rho}$ is Lipschitz-continuous and we have
\begin{equation}\label{bound1Xbis}
\sup_{x,y\in \RR^d, \, x \not = y} 
\frac{\vert Z_{\rho}(t,x) - Z_{\rho}(t,y) \vert}{\vert x- y \vert}
\leq e^{-\lambda t}, \quad t \geq 0.
\end{equation} 

At last, if $\rho^{ini}$ and $\rho^{ini,\prime}$ have a bounded support, 
then,
\[
d_{W}(\rho(t),\rho'(t)) \leq 
d_{W}(\rho^{ini},\rho^{ini,\prime}), \qquad t \geq 0.
\]
\end{theorem}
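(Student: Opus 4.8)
\emph{Plan.} I would follow the strategy of \cite{CJLV}: reduce any solution to one \emph{represented by its Filippov flow}, extract uniqueness and the two stability estimates from the single $\lambda$-convexity inequality \eqref{lambdaconvWchapo} by a symmetrization argument, and obtain existence by a sticky-particle approximation. As a preliminary, note that by \eqref{lambdaconvWchapo} the field $\achapo_{\rho}(t,\cdot)$ is one-sided Lipschitz with constant $-\lambda$, uniformly in $t$, and it is bounded (by {\bf (A3)}, or --- in the setting of (ii) --- by local Lipschitz-continuity of $W$ once the support is confined, see below). Hence, for a \emph{given} $\rho\in C([0,T];\calP_2(\RR^d))$, the frozen field $b:=\achapo_{\rho}$ enters the Poupaud--Rascle framework \cite{PoupaudRascle,Filippov,Bianchini}: it has a unique Filippov flow $Z_{\rho}$, and any distributional solution of $\pa_t\mu+\dv(b\mu)=0$ starting from $\rho^{ini}$ equals the pushforward $(Z_{\rho}(t,\cdot){}_\#\rho^{ini})_{t}$. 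Applied \emph{a posteriori} to $\mu=\rho$ itself, this already yields the announced pushforward representation; so from now on I only consider solutions of this form.

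\emph{Uniqueness and stability.} Let $\rho,\rho'$ be two solutions with flows $Z,Z'$ and initial data $\rho^{ini},\rho^{ini,\prime}$, write $M_1,M_1'$ for their centers of mass, and fix $\gamma_0\in\Gamma_0(\rho^{ini},\rho^{ini,\prime})$. Set $Q(t):=\int|Z(t,x)-Z'(t,y)|^2\,\gamma_0(dx,dy)$, so $d_W(\rho(t),\rho'(t))^2\le Q(t)$ and $Q(0)=d_W(\rho^{ini},\rho^{ini,\prime})^2$. Using $\rho(t)=Z(t,\cdot){}_\#\rho^{ini}$ to write $\achapo_{\rho}(t,Z(t,x))=-\int\nabWchapo(Z(t,x)-Z(t,x'))\,\gamma_0(dx',dy')$ (and likewise for $\rho'$), differentiating $Q$ and symmetrizing the resulting double integral under $(x,y)\leftrightarrow(x',y')$, together with the oddness of $\nabWchapo$, gives
\[
\frac{d}{dt}Q(t)=-\iint\bigl\langle\nabWchapo(a)-\nabWchapo(b),\,a-b\bigr\rangle\,\gamma_0(dx,dy)\,\gamma_0(dx',dy'),
\]
with $a=Z(t,x)-Z(t,x')$ and $b=Z'(t,y)-Z'(t,y')$. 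By \eqref{lambdaconvWchapo} the integrand is $\ge\lambda|a-b|^2$; since $\nabWchapo$ is odd, the center of mass is conserved, so $\int(Z(t,x)-Z'(t,y))\,\gamma_0(dx,dy)=M_1-M_1'$ is constant in time and $\iint|a-b|^2\gamma_0\gamma_0=2Q(t)-2|M_1-M_1'|^2$; hence $\frac{d}{dt}(Q(t)-|M_1-M_1'|^2)\le-2\lambda(Q(t)-|M_1-M_1'|^2)$. As $|M_1-M_1'|^2\le Q(0)$, Gronwall gives $Q(t)\le e^{2|\lambda|t}Q(0)$ when $\lambda\le0$ and $Q(t)\le Q(0)$ when $\lambda\ge0$, which are the two announced estimates; taking $\rho^{ini}=\rho^{ini,\prime}$ yields uniqueness. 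I would run this computation first for atomic (sticky-particle) solutions, where the Filippov dynamics is the genuine ODE $\dot x_i=-\sum_j m_j\nabWchapo(x_i-x_j)$ for a.e.\ $t$ (pairs inside a cluster contributing $0$ since $\nabWchapo(0)=0$), then pass to the limit.

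\emph{Existence, Lipschitz bound, and part (ii).} I would approximate $\rho^{ini}$ in $d_W$ by atomic measures $\rho^{ini}_n$, solve the associated sticky-particle system (well posed by Filippov's theorem thanks to the OSL bound) and set $\rho_n(t):=Z_{\rho_n}(t,\cdot){}_\#\rho^{ini}_n$, a distributional solution of \eqref{eq:agreg:TH} for $\rho^{ini}_n$. The estimate above makes $(\rho_n)_n$ Cauchy in $C([0,T];\calP_2)$ for every $T$, with limit $\rho$; passing to the limit in the \emph{antisymmetrized} weak formulation --- whose kernel $(\nabla\varphi(x)-\nabla\varphi(y))\cdot\nabWchapo(x-y)$ vanishes on the diagonal, hence is continuous and bounded, so weak-$*$ convergence of $\rho_n\otimes\rho_n$ suffices --- shows $\rho$ solves \eqref{eq:agreg:TH}, and $\rho$ is of pushforward form by the first paragraph. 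The bound on $Z_\rho$ follows from the same monotonicity applied to one flow and two starting points, $\frac{d}{dt}|Z_\rho(t,x)-Z_\rho(t,y)|^2\le-2\lambda|Z_\rho(t,x)-Z_\rho(t,y)|^2$ (the OSL constant $-\lambda$ being inherited by the essential convex hull), giving $|Z_\rho(t,x)-Z_\rho(t,y)|\le e^{-\lambda t}|x-y|$. For (ii), when {\bf (A3)} is dropped but $W$ is radial with $\lambda>0$, $\lambda$-convexity forces $\mathcal{W}'(r)\ge\lambda r>0$ for $r>0$ (a radial convex function has a nonnegative, nondecreasing radial derivative), so $W$ is attractive; running the particle approximation inside $\{\mu:\mathrm{supp}\,\mu\subset B_\infty(M_1,R)\}$, this set is invariant because for $z\in\partial B_\infty(M_1,R)$ with $z_i=(M_1)_i+R$ one has $(z-y)_i\ge0$ for every $y$ in the cube and $\mathcal{W}'\ge0$, so $(\achapo_{\rho}(t,z))_i\le0$ and the flow points inward; on that bounded region $W$ is Lipschitz, so the previous steps apply verbatim with $\lambda>0$, giving \eqref{bound1Xbis} and the non-expansive estimate.

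\emph{Main obstacle.} The real difficulty is the pointiness of $W$: $\nabWchapo$ is only continuous off the origin and is generically discontinuous there, so $\rho\mapsto\achapo_{\rho}$ is not Lipschitz from $(\calP_2,d_W)$ into $L^\infty$, which wrecks both a naive Banach fixed point and a naive stability estimate. The way around it is structural: one never estimates $\achapo_{\rho}-\achapo_{\rho'}$ pointwise, but exploits $\lambda$-convexity \eqref{lambdaconvWchapo} through the symmetrized double integral above, which only involves differences of $\nabWchapo$ at differences of flow values and so needs no regularity at $0$; solutions are built as limits of explicit sticky-particle dynamics, and the limit passage in the nonlinear term is made legitimate by antisymmetrizing (which turns the kernel into a bounded continuous one vanishing on the diagonal) together with the convention $\nabWchapo(0)=0$ that cancels intra-cluster self-interaction. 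The last technical point --- justifying the differentiations along Filippov trajectories --- reduces to the classical fact that both the OSL inequality and \eqref{lambdaconvWchapo} are preserved when passing to the essential convex hull \cite{Filippov,AubinCellina}.
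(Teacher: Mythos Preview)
Your proposal is correct and follows essentially the same route as the paper. The paper cites \cite{CJLV} outright for existence, uniqueness and the Lipschitz bound on the flow, and only supplies a proof of the two Wasserstein stability estimates; your stability argument is the same one: differentiate $Q(t)=\int|Z(t,x)-Z'(t,y)|^2\,\gamma_0(dx,dy)$ along an optimal plan, symmetrize using the oddness of $\nabWchapo$, and apply \eqref{lambdaconvWchapo} to land on $\frac{d}{dt}Q(t)\le -\lambda\bigl(2Q(t)-2|\int(Z-Z')\gamma_0|^2\bigr)$. The only cosmetic difference is in how the last step is closed: the paper simply drops the non-positive term $-2|\lambda|\,|\int(Z-Z')|^2$ when $\lambda\le0$ (respectively observes the whole right-hand side is $\le0$ when $\lambda\ge0$), whereas you identify $\int(Z-Z')\gamma_0=M_1-M_1'$ via conservation of the center of mass and Gronwall the shifted quantity $Q(t)-|M_1-M_1'|^2$; both give the same bounds. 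Your treatment of (ii) --- deducing $\mathcal W'(r)\ge\lambda r>0$ from radial $\lambda$-convexity and using it to show $B_\infty(M_1,R)$ is invariant --- is a slightly more explicit version of the paper's observation that for radial $W$ with $\lambda>0$ the force $-\nabla W(x-y)$ points from $x$ toward $y$.
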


The stability estimates that are present in this result are Dobrushin type estimates in the quadratic Wasserstein distance, in the case where the kernel is not Lipschitz-continuous but only one-sided Lipschitz-continuous. See \cite{dob} and \cite{Golse}. 

We mention that the solution, 
which is here represented by the Filippov characteristic flow, 
may be also constructed as a 
gradient flow solution in the Wasserstein space ${\mathcal P}_{2}(\RR^d)$, see
\cite{Carrillo}.
Here it is also important to remark that   \eqref{bound1Xbis} is true under the sole assumptions 
\textbf{\bf (A0)}--\textbf{\bf (A2)}
whenever $\lambda >0$ (which is a mere consequence of 
\eqref{eq:aux:proof:1}
and
\eqref{eq:aux:proof:2}
below). In that case, it ensures that $B_2(M_1,R)$ (the closed Euclidean ball) is preserved by the flow {\em without the assumption that $W$ is radial.}
As a result, it may be tempting to address the analysis below without requiring the potential to be radial. 
Nevertheless, the problem is that the 
numerical scheme does not satisfy a similar property. Indeed, the Euclidean ball $B_2(M_1,R)$ is not convex from a numerical point of view, that is to say, if we regard the mesh underpinning the scheme, then the union of the square cells whose center is included in $B_2(M_1,R)$ is not convex.
Due to this drawback, the flow associated to the scheme does not preserve the ball $B_2(M_1,R)$.
This is in contrast with Lemma
\ref{lem:CFL:lambda:>0} below, which shows that, in the radial setting, the ball $B_\infty(M_1,R+\Delta x)$ is kept stable by the scheme, where 
$\Delta x$ is the step of the spatial mesh. This latter fact is the reason why we here assume that the potential is radial.
\vskip 4pt

\begin{proof}
For the first two statements of the Theorem, existence of a unique solution and Lipschitz-continuity of the flow, we refer to \cite{CJLV}. 
These statements remain true whenever the sole \textbf{\bf (A0)}--\textbf{\bf (A2)} hold true, $W$ is radial, $\lambda$ is (strictly) positive and the support of $\rho^{ini}$ is bounded, provided that the notion of solution is limited to collections $(\rho(t,\cdot))_{t \geq 0}$ that have a compact support, uniformly in $t$ in compact subsets. Indeed, if we denote by $M_1(t)$ the center of mass of the solution at time $t$, namely $M_1(t) := \int_{\RR^d} \rho(t,dx)$, then this center of mass is known to be preserved: $M_1(t) = M_1(0) =: M_1$ (see \cite{CJLV} or Lemma \ref{bounddismom} below for the discrete counterpart). 
Now, if $\lambda \geq 0$ and if $W$ is radial, $\nabla W(x - y)$ is positively proportional to $x-y$, so that $-\nabla W(x - y)$ is parallel to $x - y$ and directed from $x$ to $y$. 
Thus, if $\rho(t)$ is zero outside the ball
$B_\infty(M_1,R)$, then, for any $x \in \partial B_\infty(M_1,R)$, the velocity $\achapo_\rho(t,x)$ is directed toward the interior of $B_\infty(M_1,R)$. This shows that $B_\infty(M_1,R)$ is preserved by the flow and guarantees that $\rho(t)$ has its support included in $B_\infty(M_1,R)$ for any time $t \geq 0$, if it is the case for $t = 0$. 
Given the fact that the support of $\rho(t)$ remains bounded in $B_\infty(M_1,R)$, everything works as if $W$ was globally Lipschitz-continuous. 
Existence and uniqueness of a solution to the aggregation equation can thus be proved by a straightforward localization argument. Indeed, observe that from the very definition of the velocity $a$, the Lipschitz-continuity constant of $W$ that is involved in the existence and uniqueness theory is the local one of $W$ on the compact subset 
$B_{\infty}(M_{1},R)$, provided that the support of $\rho^{ini}$ is included in 
$B_{\infty}(M_{1},R)$. 

Now it only remains to prove the two inequalities regarding the Wasserstein distance between solutions starting from different data. Under assumptions {\bf (A0)--(A3)} on the potential, it was proven in \cite{CJLV}, but with a constant $2|\lambda|$ instead of $|\lambda|$ in the exponential (as in \cite{dob} and \cite {Golse}, where the convolution operator is however replaced with a slightly more general integral operator), thus we here provide a proof of the present better estimate. 

We consider the two Filippov flows $(Z_{\rho}(t,\cdot))_{t \geq 0}$
and $(Z_{\rho'}(t,\cdot))_{t \geq 0}$ as defined in the statement of 
Theorem \ref{Exist}. We recall that 
\begin{equation}
\label{eq:aux:proof}
Z_{\rho}(t,\cdot){}_{\#} \rho^{ini}=
\rho(t,\cdot), \qquad 
Z_{\rho'}(t,\cdot){}_{\#} \rho^{ini,\prime}=
\rho'(t,\cdot), \qquad t \geq 0.
\end{equation}
To simplify, we just write 
$Z(t,\cdot) = Z_{\rho}(t,\cdot)$
and
$Z'(t,\cdot) = Z_{\rho'}(t,\cdot)$. Then, for any $x,y \in \RR^d$ and $t \geq 0$,
\begin{equation*}
\begin{split}
&\frac{d}{dt} \vert Z(t,x) - Z'(t,y) \vert^2
\\
&\hspace{15pt}= - 2 \Bigl\langle
Z(t,x) - Z'(t,y), 
\\
&\hspace{45pt} \int_{\RR^d}
 \widehat{\nabla W}\bigl(  Z(t,x)
- 
Z(t,x') \bigr) \rho^{ini}(dx') 
-
\int_{\RR^d}
 \widehat{\nabla W}\bigl(  Z'(t,y)
- 
Z'(t,y') \bigr) \rho^{ini,\prime}(dy')
\Bigr\rangle.  
\end{split}
\end{equation*}
Call $\pi \in \Gamma_{0}(\rho^{ini},\rho^{ini,\prime})$ an optimal plan between $\rho^{ini}$ and $\rho^{ini,\prime}$. 
Then,
\begin{equation*}
\begin{split}
&\frac{d}{dt} \vert Z(t,x) - Z'(t,y) \vert^2
\\
&\hspace{5pt}= - 2 \Bigl\langle
Z(t,x) - Z'(t,y), 
 \int_{\RR^{2d}}
\bigl[
 \widehat{\nabla W}\bigl(  Z(t,x)
- 
Z(t,x') \bigr) 
-
 \widehat{\nabla W}\bigl(  Z'(t,y)
- 
Z'(t,y') \bigr) \bigr] 
\pi(dx',dy')
\Bigr\rangle.  
\end{split}
\end{equation*}
Integrating in $(x,y)$ with respect to $\pi$, we get 
\begin{equation*}
\begin{split}
&\frac{d}{dt} \int_{\RR^{2d}} \vert Z(t,x) - Z'(t,y) \vert^2
\pi(dx,dy)
\\
&\hspace{15pt}
=- 2
 \int_{\RR^{2d}}
  \int_{\RR^{2d}}
 \Bigl\langle
Z(t,x) - Z'(t,y), 
\\
&\hspace{95pt}
\bigl[
 \widehat{\nabla W}\bigl(  Z(t,x)
- 
Z(t,x') \bigr) 
-
 \widehat{\nabla W}\bigl(  Z'(t,y)
- 
Z'(t,y') \bigr) \bigr] 
\Bigr\rangle \, \pi(dx,dy) \, \pi(dx',dy'). 
\end{split}
\end{equation*}
Thanks to the fact that $\widehat{\nabla W}$ is odd, see
\textbf{(A0)}, we can write, by a symmetry argument,
\begin{equation*}
\begin{split}
&\frac{d}{dt} \int_{\RR^{2d}} \vert Z(t,x) - Z'(t,y) \vert^2
\pi(dx,dy)
\\
&\hspace{15pt}
=- 
 \int_{\RR^{2d}}
  \int_{\RR^{2d}}
 \Bigl\langle
Z(t,x) - Z'(t,y)- \bigl(
Z(t,x') - Z'(t,y')
\bigr), 
\\
&\hspace{95pt}
\bigl[
 \widehat{\nabla W}\bigl(  Z(t,x)
- 
Z(t,x') \bigr) 
-
 \widehat{\nabla W}\bigl(  Z'(t,y)
- 
Z'(t,y') \bigr) \bigr] 
\Bigr\rangle \, \pi(dx,dy) \, \pi(dx',dy'). 
\end{split}
\end{equation*}
Using 
\eqref{lambdaconvWchapo}, we obtain
\begin{equation}
\label{eq:aux:proof:1}
\begin{split}
&\frac{d}{dt} \int_{\RR^{2d}}\vert Z(t,x) - Z'(t,y) \vert^2
\pi(dx,dy)
\\
&\hspace{15pt}
\leq - \lambda
 \int_{\RR^{2d}}
  \int_{\RR^{2d}}
 \bigl\vert 
Z(t,x) - Z'(t,y)- \bigl(
Z(t,x') - Z'(t,y')
\bigr)
\bigr\vert^2 \, \pi(dx,dy) \, \pi(dx',dy'). 
\end{split}
\end{equation}
Observe that the above right-hand side is equal to
\begin{equation}
\label{eq:aux:proof:2}
\begin{split}
&\int_{\RR^{2d}}
  \int_{\RR^{2d}}
 \bigl\vert 
Z(t,x) - Z'(t,y)- \bigl(
Z(t,x') - Z'(t,y')
\bigr)
\bigr\vert^2 \, \pi(dx,dy) \, \pi(dx',dy')
\\
&\hspace{15pt}= 2 \int_{\RR^{2d}}
 \bigl\vert 
Z(t,x) - Z'(t,y)
\bigr\vert^2 \, \pi(dx,dy)
 - 2 \biggl\vert 
  \int_{\RR^{2d}}
\bigl( Z(t,x) - Z'(t,y)
\bigr)
 \, \pi(dx,dy) \biggr\vert^2.
\end{split}
\end{equation}
\vskip 4pt

\textbf{1st case.} If $\lambda \leq 0$, we deduce 
from 
\eqref{eq:aux:proof:1}
and
\eqref{eq:aux:proof:2}
that 
\begin{equation*}
\begin{split}
&\frac{d}{dt} \int_{\RR^{2d}}\vert Z(t,x) - Z'(t,y) \vert^2
\pi(dx,dy)
\leq 2 \vert \lambda \vert 
  \int_{\RR^{2d}}
 \bigl\vert 
Z(t,x) - Z'(t,y)
\bigr\vert^2 \, \pi(dx,dy), 
\end{split}
\end{equation*}
which suffices to complete the proof of the first claim by noting 
that 
\begin{equation*}
\int_{\RR^{2d}}\vert Z(0,x) - Z'(0,y) \vert^2
\pi(dx,dy)
= 
\int_{\RR^{2d}}\vert x - y \vert^2
\pi(dx,dy) = d_{W}(\rho^{ini},\rho^{ini,\prime})^2,
\end{equation*}
and
\begin{equation*}
\int_{\RR^{2d}}\vert Z(t,x) - Z'(t,y) \vert^2
\pi(dx,dy) \geq 
d_{W}(\rho(t),\rho(t)^{\prime})^2,
\end{equation*}
see \eqref{eq:aux:proof}.
\vskip 4pt

\textbf{2nd case.} If $\lambda \geq 0$, 
we just use the fact that the right-hand side in 
\eqref{eq:aux:proof:1} is non-positive. Proceeding as above, this permits to complete the proof of the second claim. 
\end{proof}

\subsection{Main result}

The aim of this paper is to prove the convergence at order $1/2$
of an upwind type scheme in distance $d_W$ for the aggregation equation.
The numerical scheme is defined as follows.
We denote by $\Delta t$ the time step and consider a Cartesian grid with
step $\Delta x_i$ in the $i$th direction, $i=1,\ldots,d$; we then let $\Delta x:=\max_i \Delta x_i$.
We also introduce the
following notations. 
For a multi-index $J=(J_1, \ldots, J_d)\in \ZZ^d$, 
we call
$C_J:=[(J_1-\frac{1}{2})\Delta x_1,(J_1+\frac{1}{2})\Delta x_1)\times \ldots [(J_d-\frac{1}{2})\Delta x_d,(J_d+\frac{1}{2})\Delta x_d)$ the corresponding elementary cell. The center of the cell is denoted by
$x_J := (J_1\Delta x_1, \ldots, J_d \Delta x_d).$
Also, we let 
$e_i := (0,\ldots,1,\ldots,0)$
be the $i$th vector of the canonical basis, for $i \in \{1,\ldots,d\}$, and 
we expand the velocity field
in the canonical basis under the form
 $a=(a_1,\ldots,a_d)$. 

For a given nonnegative measure $\rho^{ini}\in
\calP_2(\RR^d)$, we put, for any $J\in \ZZ^d$, 
\beq\label{disrho0}
\rho_{J}^0:= \int_{C_J} \rho^{ini}(dx)\geq 0.
\eeq 
Since $\rho^{ini}$ is a probability
measure, the total mass of the system is $\sum_{J\in \ZZ^d} \rho_{J}^0 = 1$. 
We then construct iteratively the collection $((\rho_J^n)_{J \in \ZZ^d})_{n \in {\mathbb N}}$, each $\rho^n_{J}$ being intended to provide an approximation of the value $\rho(t^n,x_J)$, for $J\in \ZZ^d$.
Assuming that the approximating sequence $(\rho_{J}^n)_{J\in \ZZ^d}$ is already given at time $t^n:=n \Delta t$,
we compute the approximation at time $t^{n+1}$ by:
\beq\label{dis_num}
\begin{array}{ll}
\ds \rho_{J}^{n+1} :=  \ds \rho_{J}^n - \sum_{i=1}^{d} \frac{\Delta t}{\Delta x_i}
\Big(({a_i}^n_{J})^+ \rho_{J}^n - ({a_i}^n_{J+e_i})^- \rho_{J+e_i}^n 
-({a_i}^n_{J-e_i})^+ \rho_{J-e_i}^n + ({a_i}^n_{J})^- \rho_{J}^n \Big).
\end{array}
\eeq
The notation $(a)^+ = \max\{0,a\}$ stands for the positive part of the real $a$
and respectively $(a)^- = \max\{0,-a\}$ for the negative part.
The macroscopic velocity is defined by
\begin{equation}
\label{def:aij} 
{a_i}^n_{J} :=  -\sum_{K\in \ZZ^d} \rho_{K}^n \,D_iW_J^K, 
\quad \mbox{ where } \quad
D_iW_J^K := \widehat{\pa_{x_i} W}\bigl(x_J-x_K \big).
\end{equation}
Since $W$ is even, we also have:
\begin{equation}
\label{eq:gradients:symmetry:cells}
D_iW_{J}^{K} =  -D_iW^{J}_{K}.
\end{equation}

The main result of this paper is the proof of the convergence at order $1/2$ 
of the above upwind scheme. More precisely the statement reads:

\begin{theorem}\label{TH}
(i) Assume that $W$ satisfies hypotheses {\bf (A0)--(A3)}
and that the so-called strict $\frac 12$-CFL condition holds:
\begin{equation}
\label{CFL}
w_\infty  \sum_{i=1}^d \frac{\Delta t}{\Delta x_i} < \frac 12,
\end{equation}
with $w_{\infty}$ as in 
\eqref{borngradW}.

For $\rho^{ini} \in \calP_2(\RR^d)$, let $\rho=(\rho(t))_{t \ge 0}$ be the unique measure solution to the aggregation equation with initial data $\rho^{ini}$, as
given by Theorem \ref{Exist}.
Define $((\rho_J^n)_{J\in \ZZ^d})_{n \in {\mathbb N}}$
as in 
\eqref{disrho0}--\eqref{dis_num}--\eqref{def:aij}
and let
$$
\rho_{\Delta x}^n := \sum_{J\in \ZZ^d} \rho_J^n \delta_{x_J}, 
\quad n \in {\mathbb N}. 
$$
Then, there exists a nonnegative constant $C$, 
only depending on $\lambda$, $w_{\infty}$ and $d$, such that, for all $n\in \NN^*$,
\begin{equation}
\label{eq:TH:bound:1}
d_W(\rho(t^n),\rho_{\Delta x}^n ) \leq C \, e^{\vert \lambda  \vert (1+\Delta t)t^n} \, \bigl( \sqrt{t^n \Delta x} + \Delta x \bigr).
\end{equation}

(ii) Assume that $W$ is radial and satisfies hypotheses
\textbf{\bf (A0)}--\textbf{\bf(A2)} with $\lambda$ (strictly) positive, that 
$\rho^{ini}$ is compactly supported in $B_\infty(M_1,R)$ where $M_1$ is the center of mass of $\rho^{ini}$, and that the CFL condition \eqref{CFL} holds, with $w_{\infty}$ defined as
\begin{equation}
\label{borngradW:lambda>0}
w_{\infty} = \sup_{x \in B_{\infty}(0,2R+2\Delta x) \setminus \{0\} } \vert \nabla W(x) \vert,
\end{equation}
Assume also that $\Delta t \leq 1/2$ and $2 \lambda \Delta t < 1$.
Then, there exists a nonnegative constant $C$, 
only depending on $\lambda$, $w_{\infty}$, $d$ and $R$ such that, for all $n\in \NN^*$, \eqref{eq:TH:bound:1} is valid, as well as 
\begin{equation}
\label{eq:TH:bound:2}
d_W(\rho(t^n),\rho_{\Delta x}^n) \leq C  \, 
\bigl( \sqrt{\Delta x} + \Delta x \bigr),
\end{equation}
which proves that the error can be uniformly controlled in time. 
\end{theorem}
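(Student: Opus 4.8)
The plan is to adapt the stochastic-characteristics strategy of \cite{DLV} in its deterministic reformulation announced in the introduction, treating the nonlinearity by a Gronwall argument on the Wasserstein distance. The central object is a coupling between the numerical measure $\rho_{\Delta x}^n$ and the exact measure $\rho(t^n)$; this coupling is built step by step from the one-step transition kernel associated with the scheme \eqref{dis_num}. Concretely, I would first reinterpret one step of the upwind scheme as a transport of measures: writing $\rho^{n+1}_J = \sum_K \rho^n_K\, p^n_{K\to J}$ for suitable nonnegative weights $p^n_{K\to J}$ summing to $1$ (the diagonal weight being $1 - \sum_i \frac{\Delta t}{\Delta x_i}(|a_i{}^n_K|)$, which is nonnegative precisely under the strict $\tfrac12$-CFL condition \eqref{CFL}), so that $\rho^{n+1}_{\Delta x}$ is the pushforward of $\rho^n_{\Delta x}$ under a Markov-type kernel whose mean displacement from $x_K$ equals $\Delta t\, a^n_K + O(\Delta x)$ and whose conditional variance is $O(\Delta x\, \Delta t)$. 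This is exactly where the order-$1/2$ loss enters, through the square root of the variance term. Section \ref{sec:num} presumably collects these elementary facts (mass conservation $\sum_J\rho_J^n = 1$, nonnegativity, preservation of the center of mass, $\ell^\infty$ support propagation in case (ii)); I would invoke them here.

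Next I would set up the error propagation. Let $\gamma^n \in \Gamma_0(\rho(t^n),\rho_{\Delta x}^n)$ be an optimal plan, and push it forward: on the exact side by the exact Filippov flow $Z_\rho$ over $[t^n,t^{n+1}]$, on the discrete side by the transition kernel of the scheme. This produces a (sub-optimal) coupling of $\rho(t^{n+1})$ and $\rho_{\Delta x}^{n+1}$, hence an upper bound for $d_W(\rho(t^{n+1}),\rho_{\Delta x}^{n+1})^2$. Expanding the squared distance, I expect three kinds of contributions: (a) the transported old error, controlled by the Lipschitz constant of $Z_\rho$, namely $e^{|\lambda|\Delta t}$ (from Theorem \ref{Exist}) or $e^{-\lambda\Delta t}\le 1$ in case (ii); (b) a consistency term comparing the exact displacement $\int_{t^n}^{t^{n+1}}\achapo_\rho(\sigma,Z_\rho(\sigma,x))\,d\sigma$ with the discrete mean displacement $\Delta t\, a^n_K$ — here one must control both the time-discretization error of the velocity (using the OSL bound and the Lipschitz flow to get $|a^n_K - \achapo_\rho(t^n,x_K)| \lesssim$ the current Wasserstein error plus $\Delta x$, since $a^n_K$ is $\achapo_{\rho^n_{\Delta x}}$ evaluated at the cell center and $\nabla W$ is bounded and one-sided Lipschitz) and the spatial error from replacing points by cell centers, i.e.\ $O(\Delta x)$; and (c) the variance/diffusion term $O(\Delta x\,\Delta t)$ coming from the spreading of the kernel, which after taking square roots yields the $\sqrt{\Delta x}$. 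Assembling, and using $2\langle a,b\rangle \le \varepsilon|a|^2 + \varepsilon^{-1}|b|^2$ with $\varepsilon\sim\Delta t$, I would obtain a recursion of the form $d_W(\rho(t^{n+1}),\rho_{\Delta x}^{n+1})^2 \le (1+C|\lambda|(1+\Delta t)\Delta t)\,d_W(\rho(t^n),\rho_{\Delta x}^n)^2 + C\,\Delta t\,\Delta x + C\,(\Delta x)^2\Delta t$, whence \eqref{eq:TH:bound:1} by discrete Gronwall, noting $d_W(\rho(0),\rho_{\Delta x}^0)\le \sqrt d\,\Delta x$ from \eqref{disrho0}.

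For part (ii) the improvement to a time-uniform bound \eqref{eq:TH:bound:2} comes from the sign of $\lambda$: the contraction $e^{-\lambda\Delta t}$ of the flow, combined with the crucial cancellation already visible in \eqref{eq:aux:proof:1}--\eqref{eq:aux:proof:2} (the $\lambda$-convexity makes the nonlinear interaction term dissipative rather than expansive), turns the recursion into $d_W(t^{n+1})^2 \le (1-c\lambda\Delta t)\,d_W(t^n)^2 + C\Delta t\,\Delta x$ for a suitable $c>0$ under the assumptions $2\lambda\Delta t<1$ and $\Delta t\le 1/2$; summing the geometric series gives the bound $C\sqrt{\Delta x}$ independent of $n$. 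This requires knowing that the scheme keeps the support inside $B_\infty(M_1,R+\Delta x)$ so that the locally-Lipschitz velocity is effectively globally controlled by $w_\infty$ from \eqref{borngradW:lambda>0} — a property to be established separately (Lemma \ref{lem:CFL:lambda:>0} in the paper), and the reason radiality is assumed.

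The main obstacle, I expect, is step (b): obtaining the consistency estimate for the velocity with the right powers of $\Delta x$ and $\Delta t$ while only having one-sided Lipschitz regularity of $\nabla W$ (genuine discontinuities are allowed at the origin). One cannot Taylor-expand $\nabla W$; instead one must exploit the pushforward representation $\rho(t)=Z_\rho(t,\cdot)_\#\rho^{ini}$ together with the $\lambda$-convexity inequality \eqref{lambdaconvWchapo} in the spirit of the Dobrushin computation reproduced above, so that the error in the velocity is absorbed as a contribution to the Wasserstein error itself — closing the loop self-consistently. Handling the positive/negative-part splitting $(a)^\pm$ in \eqref{dis_num} cleanly, so that the kernel weights are manifestly nonnegative and the mean-displacement identity holds exactly (not just to leading order), is the other delicate bookkeeping point; the strict (rather than non-strict) $\tfrac12$-CFL is what gives the needed room.
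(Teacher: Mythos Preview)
Your proposal is correct and follows essentially the same route as the paper: the transition-kernel coupling (the paper's weights $\alpha_J$ and Lemma~\ref{propalpha}), the exact bias--variance split yielding the $C\Delta t\,\Delta x$ diffusion term, the Dobrushin symmetrization via \eqref{lambdaconvWchapo} to absorb the velocity error into the Wasserstein recursion, and the discrete Gronwall/geometric-series dichotomy for $\lambda\le 0$ versus $\lambda>0$. Two small points where the paper is more explicit than your sketch: it first reduces to $\rho^{ini}=\rho_{\Delta x}^0$ (so $D_0=0$) and only afterwards handles general data via the stability estimate of Theorem~\ref{Exist}; and in part (ii) the strict contraction factor $(1-c\lambda\Delta t)$ is obtained not from the flow estimate \eqref{bound1Xbis} but by showing that the ``mean'' term $2\lambda\bigl|\iint(Z-y)\,\gamma\bigr|^2$ arising from \eqref{eq:aux:proof:2} is $O(\Delta t^2)$ thanks to the conservation of the center of mass (Lemma~\ref{bounddismom}(i))---a step you listed among the preliminaries but did not explicitly connect to the contraction.
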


We stress the fact that, under the setting defined in $(ii)$,
\eqref{eq:TH:bound:1}
is valid. In small time, it provides a better estimate than  
\eqref{eq:TH:bound:2}. 
As indicated in the statement, the constant $C$
in \eqref{eq:TH:bound:2} may depend
on the value of $R$ 
in the assumption 
$\textrm{\rm Supp}(\rho^{ini}) \subset B_{\infty}(M_{1},R)$.

We also point out that, although the computations below are performed for the sole upwind scheme,
the first part of the statement, which holds true under the full set of 
hypotheses {\bf (A0)--(A3)}, 
can be 
 straightforwardly adapted to other diffusive schemes, see for instance 
our previous article \cite{DLV}.
As for $(ii)$, the statement remains true provided that the supports of the approximating measures 
$(\rho^n)_{n \geq 0}$ remain bounded as $n$ grows up. It must be stressed that there are some schemes for which the latter property fails (e.g. 
Lax-Friedrichs' scheme).

Moreover, as already mentioned in Introduction, the convergence
rate is optimal; this latter fact will be illustrated by numerical examples in
Section \ref{sec:sim}.

\begin{example}
\label{ex1D}
In one dimension, the scheme \eqref{dis_num} reads
$$
\rho_{i}^{n+1} = \rho_i^n - \frac{\Delta t}{\Delta x}\Big((a_i^n)^+ \rho_i^n - (a_{i+1}^n)^- \rho_{i+1}^n 
- (a_{i-1}^n)^+ \rho_{i-1}^n + (a_i^n)^-\rho_i^n\Big),
$$
where $i$ is just taken in $\ZZ$. 
The scheme has then the following interpretation. Given $\rho^n_{\Delta x} = \sum_{j\in \ZZ} \rho_j^n \delta_{x_j}$,
we construct the approximation at time $t^{n+1}$ by implementing the following two steps:
\begin{itemize}
\item The Delta mass $\rho_i^n$ located at position $x_i$ moves with velocity $a_i^n$ to the position $x_i+a_i^n \Delta t$.
Under the CFL condition $w_\infty \Delta t \leq \Delta x$ (which is obviously weaker than what we require in \eqref{CFL}), the point $x_i+a_i^n\Delta t$
belongs to the interval $[x_i,x_{i+1}]$ if $a_i^n\geq 0$, and to the interval $[x_{i-1},x_{i}]$ if $a_i^n\leq 0$.
\item Then the mass $\rho_{i}^n$ is split into two parts; 
if $a_{i}^n \geq 0$,
a fraction $a_{i}^n \Delta t/\Delta x$ of it is transported to the cell $i+1$, while the remaining fraction is left in cell $i$; 
if $a_{i}^n \leq 0$,
the same fraction $\vert a_{i}^n \vert \Delta t/\Delta x$ of the mass is not transported to the cell $i+1$
but to the cell $i-1$. This procedure may be regarded as a linear interpolation of the mass $\rho_{i}^n$ between the points
$x_{i}$ and $x_{i+1}$ if $a_{i}^n \geq 0$ and between the points 
$x_{i}$ and $x_{i-1}$ if $a_{i}^n \leq 0$.  
\end{itemize}
This interpretation holds only in the one dimensional case. However thanks to this interpretation, we can define a forward semi-Lagrangian scheme in any dimension on (unstructured) simplicial meshes, which is then different from \eqref{dis_num}. 
Such a scheme is introduced in Section \ref{sec:unstruct}.

Finally, we emphasize that this scheme differs from the standard finite volume upwind scheme in which the velocity
is computed at the interface $a_{i+1/2}^n$. This subtlety is due to the particular structure of the equation, as the latter requires the product $\achapo_{\rho} \rho$ to be defined properly.
A convenient way to make it proper is to compute, in the discretization, the velocity and the density at the same grid points.
This fact has already been noticed
in \cite{sinum,sisc} and is also 
illustrated numerically in Section \ref{sec:sim}. \end{example}

\section{Numerical approximation}
\label{sec:num}

\subsection{Properties of the scheme}

The following lemma explains why we called CFL the condition on the ratios 
$(\Delta t/\Delta x_{i})_{i=1,\cdots,d}$ that we formulated in the statement of 
Theorem 
\ref{TH}.
\begin{lemma}
\label{lem:CFL}
Assume that $W$ satisfies hypotheses {\bf (A0)--(A3)}
and that the condition
\eqref{CFL}
is in force. For $\rho^{ini}\in \calP_2(\RR^d)$, define 
$(\rho_{J}^0)_{J \in \ZZ^d}$ by \eqref{disrho0}.
Then the sequences $(\rho_J^n)_{n \in \NN,J \in \ZZ^d}$ and $({a_i}_J^n)_{n \in \NN,J \in \ZZ^d}$, $i=1,\ldots,d$, 
given by the scheme defined in 
\eqref{dis_num}--\eqref{def:aij},
satisfy, for all $J\in \ZZ^d$ and $n\in \NN$,
$$
\rho_{J}^n \geq 0, \qquad |{a_i}_{J}^n|\leq w_\infty, \quad i=1,\ldots, d,
$$
and, for all $n \in \NN$,
\begin{equation*} 
\sum_{J \in \ZZ^d} \rho_{J}^n=1.
\end{equation*}
\end{lemma}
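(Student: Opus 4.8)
The plan is to prove the three assertions — nonnegativity of the $\rho_J^n$, the bound $|{a_i}_J^n| \leq w_\infty$, and conservation of total mass — simultaneously by induction on $n$. The base case $n=0$ is immediate: $\rho_J^0 = \int_{C_J}\rho^{ini}(dx) \geq 0$ since $\rho^{ini}$ is a nonnegative measure, $\sum_J \rho_J^0 = \rho^{ini}(\RR^d) = 1$, and then the velocity bound $|{a_i}_J^0| \leq w_\infty$ follows from \eqref{def:aij}, the bound \eqref{borngradW} on $\widehat{\nabla W}$, and $\sum_K \rho_K^0 = 1$ (indeed $|{a_i}_J^0| \le \sum_K \rho_K^0 |D_iW_J^K| \le w_\infty \sum_K \rho_K^0 = w_\infty$).

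For the inductive step, assume the three properties hold at level $n$. First I would rewrite the scheme \eqref{dis_num} so that $\rho_J^{n+1}$ appears as an explicit convex-type combination of the $\rho_K^n$: collecting terms,
\begin{equation*}
\rho_J^{n+1} = \rho_J^n\Bigl(1 - \sum_{i=1}^d \frac{\Delta t}{\Delta x_i}\bigl(({a_i}_J^n)^+ + ({a_i}_J^n)^-\bigr)\Bigr)
+ \sum_{i=1}^d \frac{\Delta t}{\Delta x_i}\Bigl(({a_i}_{J+e_i}^n)^- \rho_{J+e_i}^n + ({a_i}_{J-e_i}^n)^+ \rho_{J-e_i}^n\Bigr).
\end{equation*}
Since $(a)^+ + (a)^- = |a|$, the coefficient of $\rho_J^n$ equals $1 - \sum_i \frac{\Delta t}{\Delta x_i}|{a_i}_J^n|$, which by the inductive velocity bound is at least $1 - w_\infty \sum_i \frac{\Delta t}{\Delta x_i}$, and this is positive (in fact $> 1/2$) by the strict CFL condition \eqref{CFL}. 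All the other coefficients, $\frac{\Delta t}{\Delta x_i}({a_i}_{J\pm e_i}^n)^\mp$, are manifestly nonnegative. Hence $\rho_J^{n+1}$ is a nonnegative linear combination of the nonnegative quantities $\rho_K^n$, giving $\rho_J^{n+1} \geq 0$.

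For mass conservation, I would sum the scheme \eqref{dis_num} over $J \in \ZZ^d$ and observe that each of the four flux terms telescopes: reindexing, $\sum_J ({a_i}_{J+e_i}^n)^-\rho_{J+e_i}^n = \sum_J ({a_i}_J^n)^-\rho_J^n$ and $\sum_J ({a_i}_{J-e_i}^n)^+\rho_{J-e_i}^n = \sum_J ({a_i}_J^n)^+\rho_J^n$, so the bracketed sum vanishes term by term in $i$ and $\sum_J \rho_J^{n+1} = \sum_J \rho_J^n = 1$ by the inductive hypothesis. (To be careful one should note the sums are absolutely convergent, which holds since $\sum_J \rho_J^n = 1$ and the velocities are bounded.) Finally, with $\rho_J^{n+1} \geq 0$ and $\sum_J \rho_J^{n+1} = 1$ now established, the velocity bound at level $n+1$ follows exactly as in the base case: $|{a_i}_J^{n+1}| \le \sum_K \rho_K^{n+1}|D_iW_J^K| \le w_\infty$. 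This closes the induction. I do not anticipate a genuine obstacle here — the only point requiring a little care is the legitimacy of rearranging the infinite sums over $\ZZ^d$, which is controlled by the $\ell^1$ bound $\sum_J \rho_J^n = 1$; the role of the strict CFL condition is precisely to keep the diagonal coefficient $1 - \sum_i \frac{\Delta t}{\Delta x_i}|{a_i}_J^n|$ nonnegative so that nonnegativity propagates.
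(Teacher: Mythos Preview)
Your proof is correct and follows essentially the same approach as the paper: rewrite the scheme so that $\rho_J^{n+1}$ is a linear combination of $\rho_J^n$, $\rho_{J\pm e_i}^n$ with nonnegative coefficients (this is exactly the paper's equation \eqref{schemarho}), use the CFL condition together with the inductive velocity bound to see that the diagonal coefficient $1-\sum_i \tfrac{\Delta t}{\Delta x_i}|{a_i}_J^n|$ is nonnegative, and get mass conservation by telescoping. The only organizational difference is that the paper states mass conservation first and then runs the induction for nonnegativity, whereas you fold all three properties into a single simultaneous induction; your version is arguably cleaner, and your remark about absolute convergence of the sums is a point the paper leaves implicit.
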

\begin{proof}
The total initial mass of the system is $\sum_{J} \rho_{J}^0=1$. 
By summing equation \eqref{dis_num} over $J$,
we can show that the mass is conservative, namely, for all $n\in \NN^*$, $\sum_{J} \rho_{J}^n=
\sum_{J} \rho_{J}^0=1$.

Also, we can rewrite equation \eqref{dis_num} as
\begin{equation}
\rho_{J}^{n+1} = \rho_{J}^n \left[ 1 - \sum_{i=1}^d \frac{\Delta t}{\Delta x_i} |{a_i}^n_{J}| \right]
+ \sum_{i=1}^d \rho_{J+e_i}^n \frac{\Delta t}{\Delta x_i}({a_i}^n_{J+e_i})^-
+ \sum_{i=1}^d \rho_{J-e_i}^n \frac{\Delta t}{\Delta x_i}({a_i}^n_{J-e_i})^+.
\label{schemarho}
\end{equation}

We prove by induction on $n$ that $\rho_{J}^n \geq 0$ for all $J \in \ZZ^d$ and for all $n \in {\mathbb N}$.
Indeed, if, for some $n \in {\mathbb N}$, 
it holds $\rho_{J}^n \geq 0$
for all $J \in \ZZ^d$, 
then, by definition \eqref{def:aij} and assumption \eqref{borngradW},
we clearly have
$$
|{a_i}_{J}^{n}|\leq w_\infty \sum_{K\in \ZZ^d} \rho_{K}^n = w_\infty, \qquad i=1,\ldots,d.
$$
Then, assuming that the condition \eqref{CFL} holds, we deduce that, in the relationship \eqref{schemarho},
all the coefficients in front of $\rho_{J}^n$, $\rho_{J-e_i}^n$ and $\rho_{J+e_i}^n$,
$i=1,\ldots,d$, are nonnegative.
Thus, using the induction assumption, we deduce that
$\rho_{J}^{n+1}\geq 0$ for all $J\in \ZZ^d$.
\end{proof}

In the following lemma, we collect two additional properties of the scheme: the conservation of the 
center of mass and the finiteness of the second order moment.

\begin{lemma}\label{bounddismom}
Let $W$ satisfy {\bf (A0)--(A3)}
and condition \eqref{CFL} be in force. For
$\rho^{ini}\in \calP_2(\RR^d)$,  define $(\rho_{J}^0)_{J\in \ZZ^d}$ by \eqref{disrho0}.
Then, the sequence $(\rho_{J}^n)_{J\in \ZZ^d}$ given by the numerical scheme
\eqref{dis_num}--\eqref{def:aij} satisfies:

$(i)$ Conservation of the center of mass.
For all $n\in \NN^*$,
$$
\sum_{J\in \ZZ^d} x_J \rho_{J}^n = \sum_{J\in \ZZ^d} x_J \rho_{J}^0.
$$
We will denote the right-hand side (and thus the left-hand side as well) by $\m1n{}$. 

$(ii)$ Bound on the second moment. There exists a constant $C>0$, independent of the parameters of the mesh, such that, 
for all $n\in \NN^*$, 
\begin{equation*}
M_{2,\Delta x}^n := \sum_{J\in \ZZ^d} |x_J|^2\rho_{J}^n \leq e^{C t^n} \big(M_{2,\Delta x}^0 + C\big),
\end{equation*}
where we recall that $t^n=n\Delta t$.
\end{lemma}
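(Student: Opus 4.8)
The plan is to prove the two assertions of Lemma~\ref{bounddismom} separately, both by direct manipulation of the scheme written in the conservative form \eqref{dis_num}, using the antisymmetry relation \eqref{eq:gradients:symmetry:cells} as the central algebraic fact. Throughout I work under \textbf{(A0)--(A3)} and the CFL condition \eqref{CFL}, so that Lemma~\ref{lem:CFL} is available: the masses $\rho_J^n$ are nonnegative, sum to $1$, and the velocities satisfy $|{a_i}_J^n|\le w_\infty$.

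\textbf{Conservation of the center of mass.} First I would multiply \eqref{dis_num} by $x_J$ and sum over $J\in\ZZ^d$. After a discrete integration by parts (shift of summation index), the right-hand side becomes a sum over $i$ of terms of the form $\frac{\Delta t}{\Delta x_i}\sum_J \bigl(x_{J+e_i}-x_J\bigr)\bigl(({a_i}_J^n)^+\rho_J^n-({a_i}_J^n)^-\rho_J^n\bigr)=\Delta t\sum_J e_i\, {a_i}_J^n \rho_J^n$, since $x_{J+e_i}-x_J=\Delta x_i e_i$ and $({a_i}_J^n)^+-({a_i}_J^n)^-={a_i}_J^n$. Hence $\sum_J x_J\rho_J^{n+1}=\sum_J x_J\rho_J^n+\Delta t\sum_J \bigl(\sum_i e_i{a_i}_J^n\bigr)\rho_J^n=\sum_J x_J\rho_J^n+\Delta t\sum_J a_J^n\rho_J^n$. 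It remains to show the last sum vanishes. Writing $a_J^n=-\sum_K\rho_K^n \widehat{\nabla W}(x_J-x_K)$ (componentwise via \eqref{def:aij}), we get $\sum_J a_J^n\rho_J^n=-\sum_{J,K}\rho_J^n\rho_K^n\widehat{\nabla W}(x_J-x_K)$, which is antisymmetric under the exchange $J\leftrightarrow K$ by \eqref{eq:gradients:symmetry:cells} (equivalently, oddness of $\widehat{\nabla W}$), hence equals zero. This gives $M_{1,\Delta x}^{n+1}=M_{1,\Delta x}^n$, and the claim follows by induction.

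\textbf{Bound on the second moment.} Here I would multiply \eqref{dis_num} by $|x_J|^2$ and sum over $J$. The same discrete integration by parts produces, for each $i$, a term $\frac{\Delta t}{\Delta x_i}\sum_J\bigl(|x_{J+e_i}|^2-|x_J|^2\bigr)\bigl(({a_i}_J^n)^+\rho_J^n\bigr)$ plus the analogous term with the negative parts; using $|x_{J+e_i}|^2-|x_J|^2=2\Delta x_i\langle x_J,e_i\rangle+\Delta x_i^2$ one gets a contribution $2\Delta t\sum_J\langle x_J,a_J^n\rangle\rho_J^n$ from the linear part and an error term bounded by $\Delta t\sum_i\Delta x_i\sum_J|{a_i}_J^n|\rho_J^n\le \Delta t\, w_\infty\bigl(\sum_i\Delta x_i\bigr)$ from the quadratic part (this uses $|{a_i}_J^n|\le w_\infty$ and $\sum_J\rho_J^n=1$; and the CFL condition makes $\Delta x_i$ small, so this error is $O(\Delta t)$ with a mesh-independent constant). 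For the main term, $|\langle x_J,a_J^n\rangle|\le |x_J|\,|a_J^n|\le w_\infty|x_J|\le \frac{w_\infty}{2}(1+|x_J|^2)$, so $2\Delta t\sum_J\langle x_J,a_J^n\rangle\rho_J^n\le w_\infty\Delta t\,(1+M_{2,\Delta x}^n)$. Altogether $M_{2,\Delta x}^{n+1}\le (1+w_\infty\Delta t)M_{2,\Delta x}^n+C\Delta t$ for a constant $C$ depending only on $w_\infty$ and $d$ (absorbing $\sum_i\Delta x_i$, which under \eqref{CFL} with a fixed reference time step is controlled, or more cleanly bounding $\Delta x_i\le$ a fixed constant). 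A discrete Gr\"onwall argument then yields $M_{2,\Delta x}^n\le e^{Ct^n}(M_{2,\Delta x}^0+C)$, as claimed; note $M_{2,\Delta x}^0=\sum_J|x_J|^2\rho_J^0<\infty$ because $\rho^{ini}\in\calP_2(\RR^d)$ and $|x_J|^2\le 2|x|^2+2(\Delta x)^2$ for $x\in C_J$.

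The only mildly delicate point is bookkeeping the quadratic-in-$\Delta x$ remainder terms so that the resulting constant $C$ is genuinely independent of the mesh parameters; this is where one must be slightly careful, invoking the CFL condition \eqref{CFL} to keep $\sum_i\Delta x_i/\Delta t$ (or simply each $\Delta x_i$) under control rather than letting it blow up. Everything else is a routine discrete integration by parts combined with the antisymmetry \eqref{eq:gradients:symmetry:cells} and the uniform velocity bound from Lemma~\ref{lem:CFL}.
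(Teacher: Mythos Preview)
Your approach is essentially the same as the paper's: multiply \eqref{dis_num} by $x_J$ (respectively $|x_J|^2$), sum, perform a discrete integration by parts, and use the antisymmetry \eqref{eq:gradients:symmetry:cells} for part (i) and Young's inequality plus discrete Gr\"onwall for part (ii). The paper carries out the componentwise bound $|\sum_J {a_i}_J^n \rho_J^n\, J_i\Delta x_i|\le \tfrac12(w_\infty^2+M_{2,\Delta x}^n)$ where you use the equivalent $|\langle x_J,a_J^n\rangle|\le \tfrac{w_\infty}{2}(1+|x_J|^2)$; both lead to the same recursive inequality.

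One correction to your closing remark: the CFL condition \eqref{CFL} gives a \emph{lower} bound on each $\Delta x_i$ (namely $\Delta x_i>2w_\infty\Delta t$), not an upper bound, so it cannot be invoked to control $\sum_i\Delta x_i$ from above. The paper's proof has exactly the same residual term $C\Delta t(\sum_i\Delta x_i+1)$ and simply absorbs $\sum_i\Delta x_i$ into the constant without comment; this is harmless in context since the result is applied as $\Delta x\to 0$, so $\Delta x_i$ is tacitly bounded by any fixed constant. Just drop the CFL justification and state directly that $\Delta x_i$ is bounded (say by $1$) in the regime of interest.
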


\begin{proof}
We recall from Lemma \ref{lem:CFL} that, for all $n\in \NN$, the sequence $(\rho_{J}^n)_{J \in \ZZ^d}$ is nonnegative and that its sum is equal to 1.

$(i)$ Using \eqref{dis_num} together with a discrete integration by parts, we have:
$$
\begin{array}{ll}
\ds \sum_{J\in \ZZ^d} x_J\rho_{J}^{n+1} = & \ds \sum_{J\in
\ZZ^d} x_J\rho_{J}^n - \sum_{i=1}^d\frac{\Delta t}{\Delta x_i} \sum_{J\in
\ZZ^d} \left(({a_i}^n_{J})^+ \, \rho_{J}^n \big(x_J-x_{J+e_i}\big)
-({a_i}^n_{J})^- \, \rho_{J}^n \big(x_{J-e_i}-x_J\big) \right).
\end{array}
$$
By definition of $x_J$, we deduce
$$
\sum_{J\in \ZZ^d} x_J\rho_{J}^{n+1} = \sum_{J\in \ZZ^d}
x_J\rho_{J}^n + \Delta t \sum_{i=1}^d \sum_{J\in \ZZ^d} {a_i}^n_{J} \, \rho_{J}^n.
$$
By 
definition of the macroscopic velocity \eqref{def:aij} and by 
\eqref{eq:gradients:symmetry:cells}, we also have
\begin{equation*}
\begin{split}
\sum_{J\in \ZZ^d} {a_i}^n_{J} \, \rho_{J}^n =
-\sum_{J\in \ZZ^d} \sum_{K\in \ZZ^d} D_iW_{J}^{K}\, \rho_{K}^n \, \rho_{J}^n
&= \sum_{J\in \ZZ^d} \sum_{K\in \ZZ^d} D_iW^{J}_{K}\, \rho_{K}^n \, \rho_{J}^n
\\
&= \sum_{J\in \ZZ^d} \sum_{K\in \ZZ^d} D_iW^{K}_{J}\, \rho_{K}^n \, \rho_{J}^n,
\end{split}
\end{equation*}
where we exchanged the role of $J$ and $K$ in the latter sum. We deduce that
it vanishes. Thus,
$$
\sum_{J\in \ZZ^d} x_J\rho_{J}^{n+1} = \sum_{J\in \ZZ^d} x_J\rho_{J}^n.
$$

$(ii)$ For the second moment, still using \eqref{dis_num} and a similar discrete integration by parts,
we get
$$
\begin{array}{ll}
\ds \sum_{J\in \ZZ^d} |x_J|^2 \rho_{J}^{n+1} &=  \ds \sum_{J\in
\ZZ^d} |x_J|^2\rho_{J}^n  
\\
[2mm]
&\hspace{5pt}- \ds \sum_{i=1}^d
\frac{\Delta t}{\Delta x_i} \sum_{J\in
\ZZ^d} \Bigl[ ({a_i}^n_{J})^+ \, \rho_{J}^n \big(|x_J|^2-|x_{J+e_i}|^2\big)
-({a_i}^n_{J})^- \, \rho_{J}^n \big(|x_{J-e_i}|^2-|x_{J}|^2\big)
\Bigr].
\end{array}
$$
By definition of $x_J$, 
$|x_J|^2-|x_{J+e_i}|^2=-2J_i\, \Delta x_i^2 - \Delta x_i^2$ and
$|x_{J-e_i}|^2-|x_J|^2=-2J_i\, \Delta x_i^2 + \Delta x_i^2$. Therefore, we get
$$
\sum_{J\in \ZZ^d} |x_J|^2 \rho_{J}^{n+1} = \sum_{J\in \ZZ^d}
|x_J|^2 \rho_{J}^{n} + 2\Delta t \sum_{i=1}^d\sum_{J\in \ZZ^d} 
J_i \Delta x_i \, {a_i}^n_{J} \, \rho_{J}^n
+ \Delta t\sum_{i=1}^d \Delta x_i \sum_{J\in \ZZ^d} \rho_{J}^n |{a_i}^n_{J}|.
$$
As a consequence of Lemma \ref{lem:CFL}, we have $|{a_i}^n_{J}|\leq w_\infty$. 
Using moreover the mass conservation, we deduce that the last term is bounded by $w_\infty \Delta t \sum_{i=1}^d \Delta x_i$.
Moreover, applying Young's inequality and using the mass conservation again, we get
$$
\Big|\sum_{J\in \ZZ^d} {a_i}^n_{J} \, \rho_{J}^n \, J_i \Delta x_{i} \Big| \leq 
\frac{1}{2} \Big( w_\infty^2 + \sum_{J\in \ZZ^d} |J_{i} \Delta x_i|^2 \, \rho_{J}^n \Big)
\leq 
\frac{1}{2} \Big( w_\infty^2 + \sum_{J\in \ZZ^d}  \rho_{J}^n 
\, \vert x^n_{J} \vert^2\Big).
$$
We deduce then that there exists a nonnegative constant $C$ 
only depending on $d$ and $w_\infty$ such that
$$
\sum_{J\in \ZZ^d} |x_J|^2 \rho_{J}^{n+1} \leq
\Big(1+C\Delta t\Big)\sum_{J\in \ZZ^d} |x_J|^2 \rho_{J}^{n} 
+ C\Delta t\left(\sum_{i=1}^d\Delta x_i+1\right).
$$
We conclude the proof using a discrete version of Gronwall's lemma.
\end{proof}

{In case when $W$ is radial and satisfies \textbf{\bf (A0)}--\textbf{(A2)}, $\lambda$ is (strictly) positive and $\rho^{ini}$ has a bounded support, Lemmas \ref{lem:CFL} and \ref{bounddismom} become: 

\begin{lemma}
\label{lem:CFL:lambda:>0}
Assume that $W$ is radial and satisfies \textbf{\bf (A0)}--\textbf{\bf (A2)}, 
$\lambda$ is (strictly positive) and $\rho^{ini}$ 
has a bounded support, then the conclusions of 
Lemmas \ref{lem:CFL} and \ref{bounddismom}
remain true provided that $w_{\infty}$ is defined  as in 
\eqref{borngradW:lambda>0}. 

Moreover, for any $R \geq 0$ such that $\textrm{\rm Supp}(\rho^{ini}) \subset B_{\infty}(M_{1},R)$, it holds, for any $n \in \NN$,
\begin{equation*}
{\rm Supp}(\rho^n_{\Delta x}) \subset 
B_{\infty}(\m1n{},R+\Delta x),
\end{equation*}
that is 
\begin{equation*}
\forall J \in \ZZ^d, \quad x_{J} \not \in B_{\infty}(\m1n{},R+\Delta x)
\Rightarrow \rho^n_{J} = 0. 
\end{equation*}
\end{lemma}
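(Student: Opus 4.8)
The plan is to prove all the assertions simultaneously, by induction on $n$, carrying as induction hypothesis the conjunction of: \textbf{(a)} $\rho^n_J\geq 0$ for every $J\in\ZZ^d$ and $\sum_{J}\rho^n_J=1$; \textbf{(b)} ${\rm Supp}(\rho^n_{\Delta x})\subset B_\infty(M_{1,\Delta x},R+\Delta x)$, where $M_{1,\Delta x}:=\sum_J x_J\rho^0_J$ (so, in particular, the support is finite); \textbf{(c)} $|{a_i}^n_J|\leq w_\infty$ for every $i$ and every $J$ with $\rho^n_J>0$, with $w_\infty$ given by \eqref{borngradW:lambda>0}; and \textbf{(d)} $\sum_J x_J\rho^n_J=M_{1,\Delta x}$ together with $M_{2,\Delta x}^n<\infty$. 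Property \textbf{(b)} is what compensates for the absence of \textbf{(A3)}: it keeps every sum in \eqref{dis_num}, \eqref{schemarho} and \eqref{def:aij} finite, so that the discrete integrations by parts in the proof of Lemma~\ref{bounddismom} remain licit, and it forces $x_J-x_K\in B_\infty(0,2R+2\Delta x)$ whenever $\rho^n_J>0$ and $\rho^n_K>0$, which is exactly what allows one to replace the global bound \eqref{borngradW} by its local version \eqref{borngradW:lambda>0}. Granting the induction, the center-of-mass identity and the second-moment bound of \textbf{(d)}, as well as the positivity, the mass conservation and the velocity bound of \textbf{(a)}--\textbf{(c)}, are then obtained precisely as in the proofs of Lemmas~\ref{lem:CFL} and \ref{bounddismom} (using \eqref{eq:gradients:symmetry:cells} for the center of mass), and the asserted support inclusion is \textbf{(b)} itself.

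For $n=0$: if $\rho^0_J>0$ then $C_J$ meets $B_\infty(M_1,R)$, and as every point of $C_J$ lies at $\infty$-distance at most $\Delta x/2$ from $x_J$, this gives $|x_J-M_1|_\infty\leq R+\Delta x/2$. Moreover $M_{1,\Delta x}=\int_{\RR^d}x_{[x]}\,\rho^{ini}(dx)$, where $x_{[x]}$ denotes the center of the cell containing $x$, so $|M_1-M_{1,\Delta x}|_\infty\leq\int_{\RR^d}|x-x_{[x]}|_\infty\,\rho^{ini}(dx)\leq\Delta x/2$; adding the two bounds yields $|x_J-M_{1,\Delta x}|_\infty\leq R+\Delta x$, i.e., \textbf{(b)} at $n=0$. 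Then \textbf{(a)}, \textbf{(c)} and \textbf{(d)} at $n=0$ are immediate, using \textbf{(b)} and \eqref{borngradW:lambda>0} for \textbf{(c)}.

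For the induction step I would establish \textbf{(b)}, then \textbf{(a)}, then \textbf{(c)}, then \textbf{(d)}. The heart of the matter is \textbf{(b)}, namely that the bounding box of the support does not grow: $\max\{(x_J)_i:\rho^{n+1}_J>0\}\leq m_i^n:=\max\{(x_J)_i:\rho^{n}_J>0\}$ for each $i$, and similarly for the minima. This is where radiality and $\lambda>0$ come in. Since $W=\mathcal{W}(|\cdot|)$ is even and convex (as $\lambda>0$), $\mathcal{W}$ is non-decreasing on $[0,\infty)$, and by \textbf{(A2)} it is $C^1$ there, so $\mathcal{W}'\geq 0$ on $(0,\infty)$; hence $\widehat{\pa_{x_i} W}(z)=\mathcal{W}'(|z|)\,z_i/|z|\geq 0$ for every $z$ with $z_i\geq 0$. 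Consequently, at a cell $x_J$ that is maximal in the $i$-th coordinate among $\{\rho^n_K>0\}$, every $x_J-x_K$ with $\rho^n_K>0$ has nonnegative $i$-th component, so ${a_i}^n_J=-\sum_K\rho^n_K\,\widehat{\pa_{x_i} W}(x_J-x_K)\leq 0$: the discrete velocity at a rightmost cell points inward. Reading the scheme in the form \eqref{schemarho}, one then checks that $\rho^{n+1}_{J'}=0$ as soon as $(x_{J'})_i>m_i^n$ for some $i$: indeed $(x_{J'})_i\geq m_i^n+\Delta x_i$, so among the cells $J'$, $J'+e_i$ and $J'\pm e_k$ ($k\neq i$) that feed $\rho^{n+1}_{J'}$ in \eqref{schemarho} all have $i$-th coordinate $>m_i^n$ and hence carry no mass, while the remaining cell $J'-e_i$ enters $\rho^{n+1}_{J'}$ only through $({a_i}^n_{J'-e_i})^+$, which vanishes because, if $\rho^n_{J'-e_i}>0$, then $x_{J'-e_i}$ is maximal in direction $i$. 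The same reasoning applies to the minima, and since by \textbf{(b)} at step $n$ the box at step $n$ is contained in $B_\infty(M_{1,\Delta x},R+\Delta x)$, we get \textbf{(b)} at step $n+1$. Note that this step uses no CFL restriction.

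Once \textbf{(b)} holds at step $n+1$, \textbf{(a)} follows from \eqref{schemarho} exactly as in Lemma~\ref{lem:CFL}: by \textbf{(c)} at step $n$ and the CFL condition \eqref{CFL} the coefficient of $\rho^n_J$ is nonnegative whenever $\rho^n_J>0$ (and the term is $0$ when $\rho^n_J=0$), while the remaining coefficients are nonnegative, and summing \eqref{dis_num} over $J$ gives $\sum_J\rho^{n+1}_J=1$. Property \textbf{(c)} at step $n+1$ is then immediate from \textbf{(a)}--\textbf{(b)}: if $\rho^{n+1}_J>0$, every $x_K$ with $\rho^{n+1}_K>0$ satisfies $|x_J-x_K|_\infty\leq 2(R+\Delta x)$, hence $|{a_i}^{n+1}_J|\leq\sum_K\rho^{n+1}_K\,|\widehat{\pa_{x_i} W}(x_J-x_K)|\leq w_\infty$. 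Finally \textbf{(d)} at step $n+1$ is obtained word for word as in Lemma~\ref{bounddismom}, using \eqref{eq:gradients:symmetry:cells}, \textbf{(c)} and the local bound $w_\infty$. The only genuinely new ingredient, and hence the main obstacle, is the support invariance \textbf{(b)}; it is the discrete counterpart of the continuous observation already used in the proof of Theorem~\ref{Exist}(ii), that for a radial $W$ with $\lambda\geq 0$ the vector $-\widehat{\nabla W}(x-y)$ always points from $x$ towards $y$, the extra $\Delta x$ in the radius accounting for the size of the cells and the subtlety being that the sign information must be combined correctly with the direction-by-direction flux structure of the upwind scheme.
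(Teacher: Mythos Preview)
Your proof is correct and takes essentially the same approach as the paper: the core step is the support invariance \textbf{(b)}, established by showing that radiality together with $\lambda>0$ forces ${a_i}^n_J\leq 0$ at any cell that is maximal in the $i$-th coordinate among those carrying mass, which is exactly the mechanism the paper uses. Your formulation is marginally cleaner in two respects---you show the bounding box of the support is non-increasing (equivalent to the paper's fixed-ball invariance) and you carefully restrict the velocity bound \textbf{(c)} to cells with $\rho^n_J>0$, whereas the paper's phrase ``the conclusions of Lemma~\ref{lem:CFL} remain true'' is a slight overstatement, since without \textbf{(A3)} the bound $|{a_i}^n_J|\leq w_\infty$ can fail for $J$ far from the support.
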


The meaning of Lemma 
\ref{lem:CFL:lambda:>0} is pretty clear. For $R$ as in the statement, the mass, as defined by the numerical scheme, cannot leave the ball $B_{\infty}(\m1n{},R+\Delta x)$. We here recover the same idea as in Theorem \ref{Exist}.
\vskip 4pt

\begin{proof}
As long as we can prove that the mass, as defined by the numerical scheme, cannot leave the ball 
$B_{\infty}(\m1n{},R+\Delta x)$, the proof is similar to that of Lemmas 
\ref{lem:CFL} and \ref{bounddismom}. So, we focus on the second part of 
the statement. 
 
We first recall that $\rho^0_{J} = \int_{C_{J}}
 \rho^{ini}(dx)$, for $J \in \ZZ^d$. Hence, if $x_{J} \not \in B_{\infty}(\m1n{},R+\Delta x)$, we have 
$x_{J} \not \in B_{\infty}(M_{1},R+\Delta x/2)$ and then
 $C_{J} \cap B_{\infty}(M_{1},R) = \emptyset$ and thus $\rho^0_{J}=0$. Below, we prove by induction that the same holds true
for any $n \in \NN$.  To do so, we assume that
 there exists an integer $n \in \NN$ such that, 
 for all $J \in \ZZ^d$, $\rho^n_{J} =0$ if 
\begin{equation}
\label{eq:lambda>0:xJ}
x_{J} \not \in B_{\infty}(\m1n{},R + \Delta x).
\end{equation}
The goal is then to prove that, for any $J$ satisfying \eqref{eq:lambda>0:xJ}, 
$\rho^{n+1}_{J}=0$. By \eqref{schemarho}, it suffices to prove that, for any coordinate $i \in \{1,\cdots,d\}$ and any $J$ as in 
 \eqref{eq:lambda>0:xJ},  
\begin{equation}
\label{eq:lambda>0:induction}
\rho^{n}_{J+e_{i}} \bigl( {a_{i}}^n_{J+e_{i}}\bigr)^- = 0, 
\quad \textrm{\rm and}
\quad
\rho^{n}_{J-e_{i}} \bigl( {a_{i}}^n_{J-e_{i}}\bigr)^+ = 0.  
\end{equation} 
Without any loss of generality, we can assume that
there exists a coordinate $i_{0} \in \{1,\cdots,d\}$ such that 
$(x_{J})_{i_{0}} > R  + \Delta x + (\m1n{})_{i_{0}}$
(otherwise $(x_{J})_{i_{0}} < -R - \Delta x+ (\m1n{})_{i_{0}}$ and the argument below is the same).
Hence, $(x_{J+e_{i_{0}}})_{i_{0}} > 
R  + \Delta x + (\m1n{})_{i_{0}}$ and, by the induction hypothesis, 
$\rho^n_{J+e_{i_{0}}}=0$, which proves the first equality in 
\eqref{eq:lambda>0:induction}
when 
$i=i_{0}$. In order to prove the second equality when $i=i_{0}$, we notice 
from 
\eqref{def:aij} 
that 
\begin{equation*}
\begin{split}
{a_{i_{0}}}^n_{J-e_{i_{0}}} =  -\sum_{K\in \ZZ^d} \rho_{K}^n \, \widehat{\pa_{x_{i_{0}}} W}\bigl(x_{J-e_{i_{0}}}-x_K \bigr)
&=
 -\sum_{K\in \ZZ^d : (x_{K})_{i_{0}} \le 
 R  + \Delta x  +  (\m1n{})_{i_{0}}} 
\rho_{K}^n \, \widehat{\pa_{x_{i_{0}}} W}\bigl(x_{J-e_{i_{0}}}-x_K \bigr)
 \\
 &=
  -\sum_{K\in \ZZ^d : (x_{K})_{i_{0}} < (x_{J})_{i_{0}}} \rho_{K}^n \, \widehat{\pa_{x_{i_{0}}} W}\bigl(x_{J-e_{i_{0}}}-x_K \bigr) 
 \\
 &=
  -\sum_{K\in \ZZ^d : (x_{K})_{i_{0}} \leq (x_{J-e_{i_{0}}})_{i_{0}}} \rho_{K}^n \, \widehat{\pa_{x_{i_{0}}} W}\bigl(x_{J-e_{i_{0}}}-x_K \bigr).
 \end{split}
\end{equation*}
As $W$ is radial and 
$\lambda >0$, $\nabla W(x - y)$ is positively proportional to $x-y$. Hence, $\widehat{\pa_{x_{i_{0}}} W}(x_{J-e_{i_{0}}}-x_K) \geq 0$
when 
$(x_{K})_{i_{0}} \leq (x_{J-e_{i_{0}}})_{i_{0}}$. 
Therefore, 
$({a_{i_{0}}}^n_{J-e_{i_{0}}})^{+}=0$, 
which proves the second equality in 
\eqref{eq:lambda>0:induction}. 
 
It remains to prove 
\eqref{eq:lambda>0:induction} for $i \not = i_{0}$. Obviously,
$(x^n_{J-e_{i}})_{i_{0}}= 
(x^n_{J+e_{i}})_{i_{0}}
=
(x^n_{J})_{i_{0}} > R  + \Delta x + 
(\m1n{})_{i_{0}}$. By the induction hypothesis, 
$\rho^n_{J-e_{i}} = 
\rho^n_{J+e_{i}} = 0$, which completes the proof.  
\end{proof}
}

{\begin{remark}
Lemma 
\ref{lem:CFL:lambda:>0} is the main rationale for requiring 
$W$ to be radial. Indeed, the counter-example below shows that 
the growth of the support of $\rho^{ini}$ can be hardly controlled 
whenever 
$\lambda >0$ and 
$W$ is just assumed to satisfy 
\textbf{\bf (A0)}--\textbf{\bf (A2)}. 
Consider for instance the following potential in dimension $d=2$:
\begin{equation*}
W(x_{1},x_{2}) = \frac12 \bigl( x_{1} - q x_{2} \bigr)^2 +  \frac{q^2}2
x_{2}^2, \quad (x_{1},x_{2}) \in \RR^2,
\end{equation*}
where $q$ is a free integer whose value will be fixed later on. 
It is well checked that
\begin{equation*}
\partial_{x_{1}} W(x_{1},x_{2}) = x_{1}- q x_{2},
\quad \partial_{x_{2}} W(x_{1},x_{2}) = q ( q x_{2} - x_{1}) + q^2 x_{2}.
\end{equation*}
Standard computations show that the smallest eigenvalue of the Hessian matrix (which is independent of $(x_{1},x_{2})$) is 
\begin{equation*}
\begin{split}
&\frac{(1+2q^2) - 2q^2 \sqrt{1+1/(4q^4)}}{2} \sim_{q \rightarrow \infty} \frac12,
\end{split}
\end{equation*}
so that $W$ is $\lambda$-convex with $\lambda$ converging to $1/2$ as $q$ tends to $\infty$.

Take now a centered probability measure $\rho$ and compute the first coordinate of the velocity field
$\achapo_{\rho}$. By centering,
\begin{equation*}
\bigl(\achapo_{\rho}\bigr)_{1}(x_{1},x_{2})  = qx_{2}- x_{1}. 
\end{equation*}
In particular, if $x_{2}=1$, then 
$(\achapo_{\rho})_{1}(x_{1},1)  = q- x_{1}$, which is non-negative as long as $x_{1}< q$. Therefore, if the numerical scheme is initialized with some centered $\rho^0_{\Delta x}$ supported by the unit square $[-1,1]^2$, 
it holds 
\begin{equation*}
(\achapo_{\rho^0_{\Delta x}})_{1}(1,1) >0,
\end{equation*}
if $q>1$. 
Hence, provided that condition \eqref{CFL} holds true, $\rho^{1}_{\Delta x}$ charges the point $(1+\Delta x,1)$. Since the numerical scheme preserves the centering, 
we also have 
\begin{equation*}
(\achapo_{\rho^1_{\Delta x}})_{1}(1+\Delta x,1) >0, 
\end{equation*}
if $q>1+\Delta x$, 
and then $\rho^2_{\Delta x}$ also charges the point $(1+2\Delta x,1)$, and so on up until $(\Delta x \lfloor q/\Delta x \rfloor,1)$.  This says that there is no way to control the growth 
of the support of the numerical solution in terms of the sole lower bound of the Hessian matrix. Somehow, the growth of $\nabla W$ plays a key role. 
This is in stark contrast with the support of the real solution, which may be bounded independently of $q$, as emphasized in the proof of Theorem \ref{Exist}.

A possible way to overcome the fact that the numerical scheme does not preserve any ball containing the initial support in the general case when $W$ is not radial would be to truncate the scheme. We feel more reasonable not to address this question in this paper, as it would require to revisit in deep the arguments used to tackle the case $\lambda \leq 0$. 
\end{remark}

}

\subsection{Comparison with a potential non-increasing scheme}
\label{subse:potential:nonincreasing}

It must be stressed that the scheme could be defined differently in order to force the potential 
(or total energy: $\iint_{\RR^d \times \RR^d} W(x - y) \, \rho(dx)\, \rho(dy)$) 
to be non-increasing. Basically, this requires the velocity $a$ to be defined as a discrete derivative.

For simplicity, we provide the construction of the scheme in dimension 1 only. For a probability measure $\varrho \in {\mathcal P}(\ZZ)$ and a cell $I \in \ZZ$, we consider the following two discrete convolutions of finite differences:
\begin{equation*}
\begin{split}
&\frac1{\Delta x}\sum_{J \in \ZZ} 
\Bigl[
\Bigl( W\bigl( \Delta x (I + 1 - J) \bigr)
-
 W\bigl( \Delta x (I - J) \bigr)
 \Bigr)  \varrho_{J}
 \Bigr] 
\\ 
&\hspace{15pt} = \biggl[ \int_{\RR^d} \frac{W(x+\Delta x - y ) - W(x-y)}{\Delta x}
 \varrho_{\Delta x}(dy) \biggr]_{\vert x=I \Delta x}
\\
\textrm{\rm and}
\quad
&\frac1{\Delta x} \sum_{J \in \ZZ} 
\Bigl[
\Bigl( W\bigl( \Delta x (I -1 - J) \bigr)
-
 W\bigl( \Delta x (I - J) \bigr)
 \Bigr)  \varrho_{J}
 \Bigr]
 \\
&\hspace{15pt}  
 = \biggl[ \int_{\RR^d} \frac{W(x-\Delta x - y ) - W(x-y)}{\Delta x}
 \varrho_{\Delta x}(dy) \biggr]_{\vert x=I \Delta x},
 \end{split}
\end{equation*}
where, as before, $\varrho_{\Delta x}$ is obtained by pushing forward $\varrho$ by the mapping $y \mapsto \Delta x \, y$. 
The two terms above define velocities at the interfaces of the cell $I$. Namely, we call 
the first term $-a_{I+\tfrac12}$ and the second one $a_{I-\tfrac12}$. 
Of course, the sign $-$ in the former term guarantees the consistency of the notation, that is $a_{(I+1)-\tfrac12}$ is equal to $a_{I+\tfrac12}$. 

Following 
\eqref{dis_num}, the scheme is defined by: 
\begin{equation}
\label{dis_num_Carrillo}
\rho^{n+1}_{J} := \rho^n_{J} - \frac{\Delta t}{\Delta x}
 \Bigl( \bigl( a^n_{J+\tfrac12} \bigr)^{+} \rho_{J}^n - 
\bigl( a^n_{J+\tfrac12} \bigr)^{-} \rho_{J+1}^n + 
\bigl( a^n_{J-\tfrac12} \bigr)^{-} \rho_{J}^n - 
\bigl( a^n_{J-\tfrac12} \bigr)^{+} \rho_{J-1}^n 
\Bigr),
\end{equation} 
for $n \in \NN$ and $J \in \ZZ$. It is shown in \cite{CCH} that the potential is non-increasing for the semi-discretized version of this scheme, which is to say that, up to a remainder of order 2 in $\Delta t$ (the value of $\Delta x$ being fixed), the potential
of the fully discretized scheme does not increase from one step to another. The proof of the latter claim follows from a direct expansion of the quantity
\begin{equation*}
\frac12 
\int_{\RR^d}
\int_{\RR^d} W(x-y)
 \rho_{\Delta x}^{n+1}(dx)
 \rho_{\Delta x}^{n+1}(dy)
\end{equation*}
by using the updating rule for $\rho^{n+1}_{J}$ in terms of
 $\rho^{n}_{J}$, $\rho^{n}_{J-1}$ and 
 $\rho^{n}_{J+1}$.

The numerical scheme investigated in this paper does not satisfy the same property. 
Indeed, we provide a counter example, which shows that the potential may increase when $W$ is convex, as a consequence of the numerical diffusion.
However, the same example, but in dimension 1, shows that the scheme \eqref{dis_num_Carrillo} may not be convergent for certain forms of potential for which Theorem \ref{TH} applies, see Subsection \ref{subse:newtonian}.

\begin{proposition}
Choose $d=2$, $W(x)= | x |$ and take $\Delta x_{1} = \Delta x_{2} = 1$. 
Let the initial condition of the scheme, which we just denote by
$\rho^0$, charge the points $0=(0,0)$, $e_{1}=(1,0)$ and $e_{2}=(0,1)$ with
$1-p$, $p/2$ and $p/2$ as respective weights, where $p \in (0,1)$. 

Then, denoting by $\rho^1$ the distribution at time $1$ obtained by implementing the upwind scheme, it holds that:
\begin{equation}
\label{eq:counterexample}
\int_{\RR^2}\int_{\RR^2} \vert x- y \vert 
\rho^1(dx)
\rho^1(dy)
= \int_{\RR^2}\int_{\RR^2} \vert x- y \vert 
\rho^0(dx)
\rho^0(dy)
+ \bigl( \sqrt{2} -1 \bigr) p^2 (2p-1) \Delta t + O(\Delta t^2),
\end{equation}
where the Landau symbol $O(\cdot)$ may depend upon $p$. 
\end{proposition}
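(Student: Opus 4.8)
The plan is to compute explicitly the two-step evolution of the three-point measure $\rho^0$ under the upwind scheme \eqref{dis_num} specialized to $d=2$, $W(x)=|x|$, $\Delta x_1=\Delta x_2=1$, and then expand the potential energy $\iint |x-y|\,\rho^1(dx)\rho^1(dy)$ to first order in $\Delta t$. Since $\nabla W(x) = x/|x|$ for $x\neq 0$ and $\widehat{\nabla W}(0)=0$, the velocities ${a_i}^0_J$ at the three charged cells are finite explicit vectors; for instance at the origin the contributions from $e_1$ and $e_2$ cancel by symmetry in a suitable sense, while at $e_1$ and $e_2$ one gets velocity components of size $O(1)$ pointing (roughly) toward the center of mass. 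First I would tabulate ${a_1}^0_J$ and ${a_2}^0_J$ at $J\in\{0,e_1,e_2\}$, read off their signs, and thereby identify into which neighboring cells mass is transferred by \eqref{schemarho}.

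Next, under the CFL condition \eqref{CFL}, the update rule \eqref{schemarho} shows that after one step the mass at each charged cell is diminished by a factor $1 - \Delta t(|{a_1}^0_J| + |{a_2}^0_J|)$ and redistributed to the (at most four) axis-neighbors with weights proportional to $\Delta t$. So $\rho^1 = \rho^0 + \Delta t\, \mu + O(\Delta t^2)$ where $\mu$ is an explicit finite signed measure (with total mass zero) supported on $\{0,e_1,e_2\}$ together with their neighbors. I would write $\mu$ out completely. Then, writing $\Phi(\nu) := \iint |x-y|\,\nu(dx)\,\nu(dy)$, bilinearity gives
\begin{equation*}
\Phi(\rho^1) = \Phi(\rho^0) + 2\,\Delta t \iint |x-y|\,\rho^0(dx)\,\mu(dy) + O(\Delta t^2),
\end{equation*}
so the coefficient of $\Delta t$ is $2\iint |x-y|\,\rho^0(dx)\,\mu(dy)$, a finite sum of values of $|x-y|$ over pairs of lattice points — each such value is either $0$, $1$, $\sqrt 2$, or $2$. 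The remaining work is to collect these terms, using the weights $1-p$, $p/2$, $p/2$ for $\rho^0$ and the explicit coefficients of $\mu$, and check that the sum simplifies to $(\sqrt 2 - 1)\,p^2(2p-1)$.

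The main obstacle — really the only delicate point — is bookkeeping: getting the velocity vectors and their signs exactly right at $e_1$ and $e_2$ (the cancellation at the origin must be handled with care since $\widehat{\nabla W}(0)=0$ by convention, not by symmetry of two nonzero terms), and then tracking the numerical-diffusion mass that leaks into the neighboring cells without sign or indexing errors. One structural check that helps: by the symmetry of the configuration under exchange of the two coordinates, $\mu$ must be invariant under the reflection swapping $e_1\leftrightarrow e_2$, which halves the number of independent coefficients to compute and provides a consistency test. I would also sanity-check the final expression at $p=1/2$, where it must vanish (the coefficient of $\Delta t$ changes sign there, which is exactly the phenomenon the proposition is designed to exhibit), and verify the sign of the leading term for $p$ near $1$ against the heuristic that strong numerical diffusion spreads the two equal masses $p/2$ apart and hence increases the potential. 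Once $\mu$ is correct, the rest is the routine $O(\Delta t^2)$ expansion already invoked, identical in spirit to the energy computation for scheme \eqref{dis_num_Carrillo} recalled above.
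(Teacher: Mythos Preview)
Your approach is correct and essentially the same as the paper's: compute the velocities at the three charged nodes, track where the upwind rule sends mass, and expand the energy to first order in $\Delta t$. Your bilinear expansion $\Phi(\rho^0+\Delta t\,\mu)=\Phi(\rho^0)+2\Delta t\iint|x-y|\rho^0(dx)\mu(dy)+O(\Delta t^2)$ is a mild organizational shortcut over the paper's direct evaluation of $\Phi(\rho^1)$, but the arithmetic content is identical --- in fact $\rho^1=\rho^0+\Delta t\,\mu$ holds exactly here, with $\mu$ supported on $\{0,e_1,e_2,e_1+e_2\}$ and all four weights equal to $\pm p^2/(2\sqrt2)$, so the cross term collapses quickly. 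One small correction to your heuristics: the velocity at the origin is $(p/2,p/2)$, not zero --- nothing cancels there, the mass at $0$ flows toward both $e_1$ and $e_2$; also, the distance $2$ never actually appears in the pairing $\mathrm{supp}(\rho^0)\times\mathrm{supp}(\mu)$.
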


Choosing $p>1/2$ in \eqref{eq:counterexample},
we see that the potential may increase at the same rate as the time step. 
\vspace{5pt}

\begin{proof}
We first compute the potential at time $0$.
To do so, 
we compute $\int_{\RR^2} \vert x- y \vert \rho^0(dy)$, for $x \in \{0,e_{1},e_{2}\}$:
\begin{equation*}
\begin{split}
&\int_{\RR^2} \vert y \vert \rho^0(dy)
= p, \quad
\int_{\RR^2} \vert e_{1}- y \vert \rho^0(dy)
=
\int_{\RR^2} \vert e_{2}- y \vert \rho^0(dy)
=
 (1-p) + \frac{p}{\sqrt{2}},
\end{split}
\end{equation*}
so that 
\begin{equation*}
\begin{split}
&\int_{\RR^2}\int_{\RR^2} \vert x- y \vert 
\rho^0(dx)
\rho^0(dy)
= 2 (1-p) p + \frac{p^2}{\sqrt{2}}. 
\end{split}
\end{equation*}

In order to compute the potential at time 1, we compute the velocity at each of the above points. 
Observing that the velocity at point $x$ is given by the formula:
\begin{equation*}
{a_{i}}^0_{x} = \int_{\RR^2} \frac{y_{i} -x_{i} }{\vert y-x \vert}
\rho^0(dy), \qquad i =1,2,
\qquad \textrm{\rm with the convention} \ \frac{0}{0} = 0,
 \end{equation*}
we get:
\begin{alignat*}{2}
&{a_{1}}^{0}_{(0,0)} = \frac{p}{2}, \quad &&\displaystyle {a_{2}}^{0}_{(0,0)} = \frac{p}{2},
\\
&{a_{1}}^{0}_{(1,0)} = - (1-p)  - \frac{p}{2\sqrt{2}}, \quad 
&&\displaystyle  {a_{2}}^{0}_{(1,0)} = \frac{p}{2\sqrt{2}},
\\
&{a_{1}}^{0}_{(0,1)} = \frac{p}{2\sqrt{2}}, \quad 
&&\displaystyle  {a_{2}}^{0}_{(0,1)} = - (1-p) - \frac{p}{2\sqrt{2}}.
\end{alignat*}
We then compute the new masses at time $1$. There is one additional point which is charged: $e_{1}+e_{2}=(1,1)$. We have:
\begin{alignat*}{2}
\rho^{1}(0) &= (1-p) + \frac{p^2}{2\sqrt{2}} \Delta t,
\\
\rho^{1}(e_{1}) = \rho^{1}(e_{2}) &= \frac{p}{2} 
- \frac{p^2}{2 \sqrt{2}} \Delta t,
\\
\rho^{1}(e_{1}+e_{2}) &= \frac{p^2}{2 \sqrt{2}} \Delta t. 
\end{alignat*}
We now have all the required data to compute the potential at time 1. 
\begin{alignat*}{2}
\int_{\RR^2} \vert y \vert \rho^1(dy)
&= p   - \frac{p^2}{\sqrt{2}} \Delta t
+ \frac{p^2}{2} \Delta t,
\\
\int_{\RR^2} \vert e_{1}- y \vert \rho^1(dy)
=
\int_{\RR^2} \vert e_{2}- y \vert \rho^1(dy)
&=
 (1-p)  + \frac{p}{\sqrt{2}}
  + 
 \frac{p^2}{\sqrt{2}} \Delta t 
 - 
 \frac{p^2}{2} \Delta t ,
 \\
 \int_{\RR^2} \vert  e_{1}+e_{2} - y \vert \rho^1(dy)
 & = (1-p) \sqrt{2}  
 + p 
  +  \frac{p^2}{2} \Delta t 
- \frac{p^2}{\sqrt{2}} \Delta t.  
\end{alignat*}
Finally, the potential at time 1 is given by:
\begin{equation*}
\begin{split}
\int_{\RR^2}\int_{\RR^2} \vert x- y \vert 
\rho^{1}(dx)
\rho^{1}(dy)
&= \Bigl(
(1-p) + \frac{p^2}{2 \sqrt{2}} \Delta t
\Bigr) 
\Bigl( 
p - \frac{p^2}{\sqrt{2}} \Delta t 
+ \frac{p^2}{2} \Delta t \Bigr)
\\
&\hspace{5pt} +\Bigl( p - \frac{p^2}{\sqrt{2}} \Delta t
\Bigr) 
\Bigl(
 (1-p)  + \frac{p}{\sqrt{2}}
  + 
 \frac{p^2}{\sqrt{2}} \Delta t   - 
 \frac{p^2}{2} \Delta t 
 \Bigr)
 \\
 &\hspace{5pt} +
 \frac{p^2}{2 \sqrt{2}} \Delta t
\Bigl( (1-p) \sqrt{2} 
 + p 
  +  \frac{p^2}{2} \Delta t 
- \frac{p^2}{\sqrt{2}} \Delta t 
\Bigr).
\end{split}
\end{equation*}
We expand the above right-hand side in powers of $\Delta t$. The zero-order term is exactly equal to 
$\int_{\RR^2} \int_{\RR^2} \vert x-y \vert \rho^0(dx) \rho^0(dy)$. 
So, we just compute the terms in $\Delta t$. It is equal to 
\begin{equation*}
(1- \sqrt{2}) (1-p) p^2 + (\sqrt{2}-1)p^3
= (\sqrt{2}-1) p^2 (2p-1), 
\end{equation*}
which completes the proof. 
\end{proof}

\section{Order of convergence}
\label{sec:ordre}

This section is devoted to the proof of Theorem \ref{TH}.

\subsection{Preliminaries}

Before presenting the proof, we introduce some notations and establish some useful properties. 
We first define the following interpolation weights: for $J\in\ZZ^d$ and 
$y \in \RR^d$, we let
\begin{equation}\label{def:alpha}
\alpha_J(y) = \left\{
\begin{array}{ll}
\ds 1-\sum_{i=1}^d \frac{|\langle y-{x_J},e_i\rangle|}{\Delta x_i} 
& \textrm{when} \ y\in C_J,
\vspace{5pt}
\\
\ds \frac{1}{\Delta x_i}\bigl(\langle y-x_{J-e_i},e_i\rangle\bigr)^+ 
&\textrm{when} \ y\in C_{J-e_i}, \ \ \textrm{for} \ i= 1,\dots,d,
\vspace{5pt}
\\ 
\ds \frac{1}{\Delta x_i}\bigl(\langle y-x_{J+e_i},e_i\rangle\bigr)^- 
&\textrm{when} \ y\in C_{J+e_i}, \ \ \textrm{for} \ i=1,\dots,d,
\vspace{5pt}
\\
0 \quad &\textrm{otherwise}.
\end{array}
\right.
\end{equation}
The terminology \textit{interpolation weights} is justified by the following straightforward observation. 
Given a collection of reals $(h_{J})_{J \in {\mathbb Z}^d}$ indexed by the cells of the mesh, which we may regard as a real-valued function $h : x_{J} \mapsto h_{J}$ defined at the nodes of the mesh, we may define an interpolation  
of $h=(h_{J})_{J \in {\mathbb Z}^d}$ by letting
\begin{equation}
\label{eq:matchalI}
{\mathcal I}(h)(y) = \sum_{J \in {\mathbb Z}^d} h_{J} \alpha_{J}(y), \quad y \in \RR^d. 
\end{equation}
Obviously, the sum in the right-hand side makes sense since only a finite number of weights are non-zero for a given value of $y$. Clearly, the functional ${\mathcal I}$ is an \textit{interpolation operator}. As explained below, ${\mathcal I}$ makes the connection between the analysis we perform in this paper and the one we performed in our previous work \cite{DLV}.

Several crucial facts must be noticed. 
The first one is that, contrary to what one could guess at first sight, the weights are not necessarily non-negative. For a given $J \in \ZZ^d$, take for instance $y=(y_{i}=(J_{i}-\tfrac12) \Delta x_{i})_{i=1,\dots,d}\in C_{J}$. Then $\alpha_{J}(y) = 1 - \tfrac{d}{2}$, which is obviously negative if $d\geq 3$.   
However, 
the second point is that, for \textit{useful values} of $y$, the weights are indeed non-negative provided that the CFL condition \eqref{CFL} is in force.  For 
a given $J \in \ZZ^d$, call indeed $U_{J}$ the subset of $C_{J}$ of so-called \textit{useful values} that are in $C_{J}$, as given by
\begin{equation*}
U_{J} = \bigl\{ y \in \RR^d : \bigl\vert \bigl\langle y - x_{J},e_{i} \rangle \bigr\vert \leq
w_{\infty} \Delta t, \quad  i = 1,\dots,d \bigr\}. 
\end{equation*}
Then, for any $J,L \in \ZZ^d$ and any $y \in U_{L}$, $\alpha_{J}(y)$ is non-negative, which is a direct consequence of the CFL condition \eqref{CFL}.
In fact, the CFL condition \eqref{CFL} says more, and this is the rationale for the additional factor 
$\tfrac12$ in \eqref{CFL}: $U_{J}$ is included in $C_{J}$. 
Of course, the consequence is that, under the CFL condition \eqref{CFL}, we have, for any $J\in\ZZ^d$, 
$x_J+a_J^n\Delta t \in C_J$, where $a^n_{J}$ is the 
$d$-dimensional vector with entries $({a_i}^n_{J})_{i=1,\cdots,d}$
 (indeed $|{a_{i}}_J^n| \Delta t \leq w_\infty \Delta t < \Delta x_i/2$). 
Another key fact is that  
the definition of $\alpha_{J}(y)$ in \eqref{def:alpha}
is closely related to the definition of the numerical scheme 
 \eqref{dis_num}. Indeed, we have the following formula, for any $J,L \in {\mathbb Z}^d$,
 \begin{equation}
 \label{eq:transition:probas}
 \alpha_{J} \bigl( x_{L} + \Delta t a^n_{L}
 \bigr) = 
  \left\{
\begin{array}{ll}
\ds 1-\sum_{i=1}^d \vert {a_{i}}^n_{J} \vert \frac{\Delta t}{\Delta x_i} 
& \textrm{when} \ L=J,
\vspace{5pt}
\\
\ds  \frac{\Delta t}{\Delta x_i}\bigl( {a_{i}}^n_{J-e_{i}} \bigr)^{+}
&\textrm{when} \ L = J-e_{i}, \ \ \textrm{for} \ i= 1,\dots,d,
\vspace{5pt}
\\ 
\ds  \frac{\Delta t}{\Delta x_i}\bigl( {a_{i}}^n_{J+e_{i}} \bigr)^{-}
&\textrm{when} \ L = J+e_{i}, \ \ \textrm{for} \ i= 1,\dots,d,
\vspace{5pt}
\\
0 \quad &\textrm{otherwise}.
\end{array}
\right.
\end{equation} 
In particular,
we may rewrite \eqref{dis_num} as
\begin{equation}\label{scheme3}
\forall\, J \in \ZZ^d, \quad 
\rho^{n+1}_J = \sum_{L\in\ZZ^d} \rho^n_L \alpha_{J}\bigl(x_L+\Delta t a^n_L\bigr),
\end{equation}
which is the core of our analysis below. In this regard, 
The following lemma gathers some useful properties.
\begin{lemma}\label{propalpha}
Let $(\alpha_{L}(y))_{L \in \ZZ^d,y \in \RR^d}$ be defined as in \eqref{def:alpha}.
Then, for any $y \in \RR^d$, we have
$$
\sum_{L\in \ZZ^d} \alpha_{L}(y) = 1 \quad \mbox{ and } \quad \sum_{L\in \ZZ^d} x_L \alpha_{L}(y)  = y.
$$
\end{lemma}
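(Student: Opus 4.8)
The plan is to verify the two identities in Lemma \ref{propalpha} directly from the piecewise definition \eqref{def:alpha} of the weights $\alpha_L(y)$, by fixing an arbitrary $y \in \RR^d$ and identifying precisely which cells $C_L$ contribute a nonzero weight. Since $y$ belongs to exactly one cell, say $y \in C_J$, the definition \eqref{def:alpha} shows that $\alpha_L(y) \neq 0$ only when $L = J$ or $L = J \pm e_i$ for some $i \in \{1,\dots,d\}$ (the cell $C_J$ plays the role of $C_{L-e_i}$ when $L = J+e_i$ and of $C_{L+e_i}$ when $L = J-e_i$). So both sums reduce to $2d+1$ explicit terms.

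First I would write out those $2d+1$ weights explicitly in terms of the signed coordinates $\langle y - x_J, e_i\rangle$: namely $\alpha_J(y) = 1 - \sum_{i=1}^d |\langle y - x_J,e_i\rangle|/\Delta x_i$, while $\alpha_{J+e_i}(y) = (\langle y - x_J,e_i\rangle)^-/\Delta x_i$ and $\alpha_{J-e_i}(y) = (\langle y-x_J,e_i\rangle)^+/\Delta x_i$ (here using $x_{J+e_i} = x_J + \Delta x_i e_i$ and $x_{J-e_i} = x_J - \Delta x_i e_i$). For the first identity, summing over $i$ and using $(\cdot)^+ + (\cdot)^- = |\cdot|$ makes the terms $\alpha_{J+e_i}(y) + \alpha_{J-e_i}(y) = |\langle y - x_J, e_i\rangle|/\Delta x_i$ cancel exactly with the corresponding negative contributions inside $\alpha_J(y)$, leaving $\sum_L \alpha_L(y) = 1$.

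For the second identity, I would compute $\sum_L x_L \alpha_L(y) = x_J \alpha_J(y) + \sum_{i=1}^d \big( x_{J+e_i}\alpha_{J+e_i}(y) + x_{J-e_i}\alpha_{J-e_i}(y) \big)$ and substitute $x_{J\pm e_i} = x_J \pm \Delta x_i e_i$. The coefficient of $x_J$ is just $\sum_L \alpha_L(y) = 1$ by the first identity, contributing $x_J$. The remaining contribution is $\sum_{i=1}^d \Delta x_i e_i \big( \alpha_{J+e_i}(y) - \alpha_{J-e_i}(y) \big) = \sum_{i=1}^d e_i \big( (\langle y-x_J,e_i\rangle)^- - (\langle y - x_J,e_i\rangle)^+ \big) = -\sum_{i=1}^d \langle y - x_J, e_i\rangle e_i = -(y - x_J)$, using $(\cdot)^- - (\cdot)^+ = -(\cdot)$ and the fact that $(e_i)$ is an orthonormal basis. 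Adding the two pieces gives $x_J + (y - x_J) = y$, as claimed.

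I do not anticipate a genuine obstacle here; the only point requiring a little care is the bookkeeping of which cells are "seen" by a given $y$ — in particular making sure that when $y \in C_J$, the three branches of \eqref{def:alpha} are being applied with the correct index shifts (the branch ``$y \in C_{L-e_i}$'' is the one relevant for $L = J + e_i$, and ``$y \in C_{L+e_i}$'' for $L = J - e_i$), and that no cell is double-counted. Once the finite list of nonzero weights is correctly enumerated, both identities follow from the elementary algebraic facts $a^+ + a^- = |a|$ and $a^- - a^+ = -a$ applied coordinate by coordinate.
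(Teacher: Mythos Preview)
Your approach is exactly the one taken in the paper: fix $y\in C_J$, observe that only the $2d+1$ weights $\alpha_J(y)$ and $\alpha_{J\pm e_i}(y)$ are nonzero, and reduce both identities to the elementary relations $a^+ + a^- = |a|$ and $a^+ - a^- = a$.

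There is, however, a sign slip in your explicit formulas for $\alpha_{J\pm e_i}(y)$. You correctly identify that for $L=J+e_i$ the relevant branch of \eqref{def:alpha} is ``$y\in C_{L-e_i}$'', which gives
\[
\alpha_{J+e_i}(y)=\frac{1}{\Delta x_i}\bigl(\langle y-x_{L-e_i},e_i\rangle\bigr)^+=\frac{1}{\Delta x_i}\bigl(\langle y-x_J,e_i\rangle\bigr)^+,
\]
not $(\langle y-x_J,e_i\rangle)^-/\Delta x_i$ as you wrote; likewise $\alpha_{J-e_i}(y)=(\langle y-x_J,e_i\rangle)^-/\Delta x_i$. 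This swap is harmless for the first identity (since $a^++a^-=|a|$ either way), but in the second identity it makes your ``remaining contribution'' come out as $-(y-x_J)$ instead of $+(y-x_J)$, and indeed your last line silently flips the sign back when you write ``$x_J+(y-x_J)=y$''. With the correct assignment one gets $\Delta x_i\,e_i\bigl(\alpha_{J+e_i}(y)-\alpha_{J-e_i}(y)\bigr)=e_i\langle y-x_J,e_i\rangle$ directly, and the sum over $i$ yields $y-x_J$ without any sign juggling. Once this is corrected, your argument and the paper's are identical.
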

\begin{proof}
There exists a unique $J\in \ZZ^d$ such that $y\in C_J$.
Then, we compute
\begin{align*}
\sum_{L\in \ZZ^d} \alpha_{L}(y) & = \alpha_{J}(y) + \sum_{i=1}^d \bigl(\alpha_{J+e_i}(y)+\alpha_{J-e_i}(y)\bigr) 
\\
& = 1 - \sum_{i=1}^d \frac{|\langle y-{x_L},e_i\rangle|}{\Delta x_i}  
+ \frac{1}{\Delta x_i} \sum_{i=1}^d \bigl(\langle y-x_{J},e_i\rangle \bigr)^+ + \bigl(\langle y-x_{J},e_i\rangle \bigr)^- = 1
\end{align*}
Then, using the fact that $x_{J+e_i}-x_J = \Delta x_i e_i$, for $i=1,\ldots,d$, we have
\begin{align*}
\sum_{L\in \ZZ^d} x_L \alpha_{L}(y) & 
= x_J \alpha_J(y) + \sum_{i=1}^d \bigl(x_{J+e_i} \alpha_{J+e_i}(y)+ x_{J-e_i} \alpha_{J-e_i}(y)\bigr) 
\\
& = x_J + \sum_{i=1}^d \Bigl( \frac{1}{\Delta x_i} \bigl(\langle y-x_J, e_i\rangle \bigr)^+ \Delta x_i e_i 
- \frac{1}{\Delta x_i} \bigl(\langle y-x_J, e_i \bigr\rangle)^- \Delta x_i e_i \Bigr)  \\
& = x_J + \sum_{i=1}^d \langle y-x_J, e_i\rangle e_i = y,
\end{align*}
which completes the proof.
\end{proof}

\begin{remark}
\label{comparison}
Lemma \ref{propalpha} prompts us to draw a comparison with our previous paper \cite{DLV}.
For a given $y \in \RR^d$ in the set of useful values $U:=\cup_{J \in \ZZ^d} U_{J}$, namely $y \in U_{J}$ for some $J \in \ZZ^d$, 
the collection of weights $(\alpha_{L}(y))_{L \in \ZZ^d}$
forms a probability measure, as the weights are non-negative and their sum is 1!
In particular, 
${\mathcal I}(h)(y)$  in \eqref{eq:matchalI}, for $y \in U$, may be interpreted as an expectation. 

Using the same terminology as in \cite{DLV} (which is in fact the terminology of the theory of Markov chains), those weights should be regarded as transition probabilities: For a given $y$ in the set of useful values, 
$\alpha_{L}(y)$ reads as the probability of jumping from a \emph{certain state depending on the sole value of $y$} to 
the node $x_{L}$. Of course, the interpretation of the so-called \emph{certain state depending on the sole value of $y$} is better understood from \eqref{eq:transition:probas}. 
In 
\eqref{eq:transition:probas}, if we fix a cell $L \in \ZZ^d$ (or equivalently a node $x_{L}$), then  
$\alpha_{J}(x_{L}+\Delta t a^n_{L})$ should read as the probability of passing from the node $x_{L}$
to the node $x_{J}$ (or from the cell $L$ to the cell $J$) at the $n^{\textrm{\rm th}}$ step of a (time inhomogeneous) Markov chain having the collection of nodes (or of cells) as state space. 
In this regard, 
\eqref{scheme3} is nothing but the Kolmogorov equation for the corresponding Markov chain, as $(\rho^n_{J})_{J \in \ZZ^d}$ can be interpreted as the law at time $n$ of the Markov chain driven by the latter transition probabilities. 
The reader can easily check that the so-called \emph{stochastic characteristic} used in \cite{DLV} 
is in fact this Markov chain.

Below, we do not make use of the Markov chain explicitly. Still, we use the weights 
$(\alpha_{J}(y))_{J \in \ZZ^d, y \in \RR^d}$ to construct a coupling between the two measures 
$\rho^n_{\Delta x}$ and $\rho^{n+1}_{\Delta x}$, that is to construct a specific element of $\Gamma(\rho^n_{\Delta x},\rho^{n+1}_{\Delta x})$. In \cite{DLV}, 
this coupling does not explicitly show up but it is in fact implicitly used, as it coincides with the joint law of two consecutive states of the aforementioned Markov chain.

In a nutshell, the reader can reformulate the whole analysis below in a probabilistic fashion. The only (conceptual) difficulty to do so 
is that, in contrast with \cite{DLV}, the Markov chain is here \emph{nonlinear}: 
as $a^n$ in 
\eqref{def:aij} depends on $\rho^n$, the transition probabilities of the Markov do depend upon the marginal law of the Markov chain itself, which fact gives rise to a so-called \emph{nonlinear Markov chain}!
\end{remark}

\subsection{Proof of Theorem \ref{TH}}

{\bf 1st step.} We first consider the case where the initial datum is given by $\rho^{ini}:=\rho_{\Delta x}^0 = \sum_{J\in\ZZ^d} \rho_J^0 \delta_{x_J}$, where we recall that $\rho_J^0$ is defined in \eqref{disrho0}.
For $n\in\NN^*$, let us define 
$$
D_n:= d_W \bigl(\rho(t^n),\rho_{\Delta x}^n \bigr).
$$
Clearly, with our choice of initial datum, we have $D_0=0$.

Let $\gamma$ be an optimal plan in $\Gamma_0(\rho(t^n),\rho_{\Delta x}^n)$, we have
\[
D_n = \left(\iint_{\RR^d\times\RR^d} |x-y|^2 \gamma(dx,dy)\right)^{1/2}.
\]

Let us introduce $a^n_{\Delta x}$, the piecewise affine in each direction reconstruction of the velocity such that for all 
$J\in\ZZ^d$, $a^n_{\Delta x}(x_J)=a_J^n$ 
Denote also by 
$Z := 
Z_{\rho}$
the flow given by Theorem 
\ref{Exist}, when $\rho^{ini}$ is prescribed as above. 
Recalling the definition 
of $\alpha_J(y)$ from \eqref{def:alpha}, 
we then consider a new measure $\gamma'$, defined as
the image of $\gamma$ by the 
kernel ${\mathcal K}$ that associates with a point $(x,y) \in \RR^d \times \RR^d$
the point $(Z(t^{n+1};t^n,x),x_{L})$ with measure $\alpha_{L}(y+\Delta t  a^n_{\Delta x}(y))$, namely, 
for any 
 two Borel subsets $A$ and $B$ of $\RR^d$,
\begin{equation*}
\begin{split}
{\mathcal K} \bigl( (x,y), A \times B \bigr) &= {\mathbf 1}_{A}\bigl( Z(t^{n+1};t^n,x) \bigr) 
\sum_{L \in {\ZZ}^d} \alpha_{L}\bigl(y+\Delta t  a^n_{\Delta x}(y) \bigr)
{\mathbf 1}_{B}(x_{L}) 
\\
&= \iint_{\RR^d\times \RR^d} {\mathbf 1}_{A \times B}(x',y') 
\biggl[ \delta_{Z(t^{n+1};t^n,x)} \otimes \biggl( \sum_{L \in \ZZ^d}
\alpha_{L}\bigl(y+\Delta t  a^n_{\Delta x}(y) \bigr)
\delta_{x_{L}}\biggr) \biggr] (dx',dy'),
\end{split}
\end{equation*}
where $\delta_{z}$ denotes the Dirac mass
at point $z$,
and then
\begin{equation*}
\gamma'(A \times B) = \iint_{\RR^d \times \RR^d} {\mathcal K} \bigl( (x,y),A \times B \bigr) 
\gamma(dx,dy). 
\end{equation*} 
Equivalently, for any bounded Borel-measurable function $\theta : \RR^d \times \RR^d \rightarrow \RR$,
\begin{equation}\label{def:gamma'}
\iint_{\RR^d\times\RR^d} \theta(x,y) \gamma'(dx,dy) = \iint_{\RR^d\times\RR^d}
\biggl[\sum_{L\in \ZZ^d} \theta\bigl(Z(t^{n+1};t^n,x),x_L\bigr) 
\alpha_L\bigl(y+\Delta t a^n_{\Delta x}(y)\bigr)\biggr]\,\gamma(dx,dy).
\end{equation}
Then we have $\gamma'\in\Gamma(\rho(t^{n+1}),\rho_{\Delta}^{n+1})$.
Indeed, for any bounded Borel-measurable function $\theta_{1}: \RR^d \rightarrow \RR$, we have, from \eqref{def:gamma'} and Lemma \ref{propalpha},
\begin{align*}
\iint_{\RR^d\times\RR^d} \theta_{1}(x) \gamma'(dx,dy) 
& = \iint_{\RR^d\times\RR^d}
\biggl[\sum_{L\in \ZZ^d} \theta_{1}\bigl(Z(t^{n+1};t^n,x)\bigr) \alpha_L\bigl(y+\Delta t a^n_{\Delta x}(y)\bigr)\biggr]\,\gamma(dx,dy)  \\
& = \iint_{\RR^d\times\RR^d} \theta_{1}\bigl(Z(t^{n+1};t^n,x)\bigr)\,\gamma(dx,dy)  \\
& = \int_{\RR^d} \theta_{1}\bigl(Z(t^{n+1};t^n,x)\bigr) \rho(t^n,dx) = \int_{\RR^d} \theta_{1}(x) \rho(t^{n+1},dx),
\end{align*}
where we used Theorem \ref{Exist} and
where $\rho(t^n,dx)$ is a shorter notation for $\rho(t^n)(dx)$ and similarly 
for $\rho(t^{n+1},dx)$. 
Similarly, for any bounded Borel-measurable function $\theta_{2} : \RR^d \rightarrow \RR$,
\begin{align*}
\iint_{\RR^d\times\RR^d} \theta_{2}(y) \gamma'(dx,dy) 
& = \iint_{\RR^d\times\RR^d}
\biggl[\sum_{L\in \ZZ^d} \theta_{2}(x_L) \alpha_L\bigl(y+\Delta t a^n_{\Delta x}(y)\bigr)\biggr]\,\gamma(dx,dy)  \\
& = \sum_{J\in \ZZ^d}\sum_{L\in \ZZ^d} \theta_{2}(x_L) \alpha_L\bigl(x_J+\Delta t a_J^n\bigr) \rho_J^n  \\
& = \sum_{L\in \ZZ^d} \theta_{2}(x_L) \rho_L^{n+1} = \int_{\RR^d} \theta_{2}(y) \rho^{n+1}_{\Delta x}(dy),
\end{align*}
where we used \eqref{scheme3}.
In particular, we deduce
$$
D_{n+1}^2 \leq \iint_{\RR^d\times\RR^d} |x-y|^2 \gamma'(dx,dy).
$$
Using the definition of $\gamma'$ given in \eqref{def:gamma'}, we get
\begin{equation}\label{eq1}
D_{n+1}^2 \leq \iint_{\RR^d\times\RR^d} \sum_{L\in \ZZ^d} \bigl|Z(t^{n+1};t^n,x)-x_L\bigr|^2 \alpha_L \bigl(y+\Delta t a^n_{\Delta x}(y)\bigr) \gamma(dx,dy).
\end{equation}
Using both equalities of Lemma \ref{propalpha}, we compute\footnote{The probabilistic reader will easily recognize the standard computation of the $L^2$ norm of a random variable in terms of its variance and its expectation, which indeed plays, but under a conditional form, a key role in 
\cite{DLV}.}
\begin{align}
&\sum_{L\in \ZZ^d}\bigl|Z(t^{n+1};t^n,x) - x_L\bigr|^2 \alpha_L\bigl(y+\Delta t a^n_{\Delta x}(y)\bigr) \notag
\\
&=
\sum_{L\in \ZZ^d}\Bigl| \Bigl( Z(t^{n+1};t^n,x) - \bigl( y 
+\Delta t a^n_{\Delta x}(y)\bigr)
\Bigr) 
 - \Bigl( x_L
 - \bigl( y +
 \Delta t a^n_{\Delta x}(y)\bigr)
 \Bigr)
 \Bigr|^2 \alpha_L\bigl(y+\Delta t a^n_{\Delta x}(y)\bigr) \notag
\\
&=
 \bigl|Z(t^{n+1};t^n,x) - y - \Delta t a^n_{\Delta x}(y) \bigr|^2 
+ \sum_{L\in \ZZ^d} \bigl|x_L
- 
y - \Delta t a^n_{\Delta x}(y)
\bigr|^2 \alpha_L\bigl(y+\Delta t a^n_{\Delta x}(y)\bigr)
\nonumber\\
&\hspace{15pt} 
-2\biggl\langle Z(t^{n+1};t^n,x) - y - \Delta t  a^n_{\Delta x}(y),
\sum_{L \in \ZZ^d}
\bigl(x_L-y-\Delta t a^n_{\Delta x}(y)\bigr) \alpha_L\bigl(y+\Delta t a^n_{\Delta x}(y)
\biggr\rangle .  \label{eqZz}
\end{align}
Now, as a consequence of Lemma 
\ref{propalpha}, we observe that
$$
\sum_{L\in \ZZ^d} \bigl(x_L-y-\Delta t a^n_{\Delta x}(y)\bigr) \alpha_L\bigl(y+\Delta t a^n_{\Delta x}(y)\bigr) = 0. 
$$
Thus, equation \eqref{eqZz} rewrites
\begin{align*}
\sum_{L\in \ZZ^d}\bigl|Z(t^{n+1};t^n,x) - x_L\bigr|^2 \alpha_L\bigl(y+\Delta t a^n_{\Delta x}(y)\bigr) 
&=  \bigl|Z(t^{n+1};t^n,x)-y-\Delta t a^n_{\Delta x}(y)\bigr|^2 
 \\
&\hspace{15pt}+ \sum_{L\in \ZZ^d} \bigl|x_L-y-\Delta t a^n_{\Delta x}(y)\bigr|^2 \alpha_L\bigl(y+\Delta t a^n_{\Delta x}(y)\bigr).
\end{align*}
Injecting into \eqref{eq1}, we deduce
\begin{align}
D_{n+1}^2 \leq  
&\ \iint_{\RR^d\times\RR^d} \bigl|Z(t^{n+1};t^n,x)-y-\Delta t a^n_{\Delta x}(y)\bigr|^2 \gamma(dx,dy)  \nonumber \\
& + \int_{\RR^d} \sum_{L\in \ZZ^d} \bigl|x_L-y-\Delta t a^n_{\Delta x}(y) \bigr|^2 \alpha_L\bigl(y+\Delta t a^n_{\Delta x}(y)\bigr) \rho_{\Delta x}^n(dy), \label{eq2}
\end{align}
where we used the fact that $\rho_{\Delta x}^n$ is the second marginal of $\gamma$. 
By definition, $\rho_{\Delta x}^n(y) = \sum_{J\in\ZZ^d} \rho_J^n \delta_J(y)$, so that 
\begin{align*}
&\sum_{L\in \ZZ^d} \int_{\RR^d} \bigl|x_L-y-\Delta t a^n_{\Delta x}(y)\bigr|^2 \alpha_L\bigl(y+\Delta t a^n_{\Delta x}(y)\bigr) \rho_{\Delta x}^n(dy)  \\
&\hspace{5cm} = \sum_{J\in \ZZ^d} \sum_{L\in \ZZ^d} \bigl|x_L-x_J-\Delta t a^n_J\bigr|^2 \alpha_L\bigl(x_J+\Delta t a^n_J\bigr) \rho_J^n.
\end{align*}
Moreover using the definition of $\alpha_L$ in \eqref{def:alpha}, we compute
\begin{align*}
&\sum_{L\in \ZZ^d} \bigl|x_L-x_J-\Delta t a^n_J\bigr|^2  \alpha_L\bigl(x_J+\Delta t a^n_J\bigr) 
\\
&= \Delta t^2 |a_J^n|^2 \left(1-\sum_{i=1}^d \frac{\Delta t}{\Delta x_i} |{a_i}_J^n|\right)  +  \sum_{i=1}^d \left( \bigl|\Delta x_i e_i -\Delta t a^n_J\bigr|^2 \frac{\Delta t}{\Delta x_i} ({a_i}_J^n)^+ + 
\bigl|\Delta x_i e_i +\Delta t a^n_J\bigr|^2 \frac{\Delta t}{\Delta x_i} ({a_i}_J^n)^- \right) \\
& \leq  C\Delta t(\Delta t + \Delta x),
\end{align*}
where we used, for the last inequality, the CFL condition \eqref{CFL} and the fact that the velocity $(a_J^n)_J$ is uniformly bounded (see Lemma \ref{lem:CFL} or Lemma \ref{lem:CFL:lambda:>0}).
Then, \eqref{eq2} gives
\begin{equation}\label{eqD1}
D_{n+1}^2 \leq \iint_{\RR^d\times\RR^d} \bigl|Z(t^{n+1};t^n,x)-y-\Delta t a^n_{\Delta x}(y)\bigr|^2 \gamma(dx,dy)  + C \Delta t (\Delta t+\Delta x).
\end{equation}

{\bf 2nd step.}
We have to estimate the error between the exact characteristic $Z(t^{n+1};t^n,x)$ and the forward Euler discretization $y+\Delta t a^n_{\Delta x}(y)$.
By definition of the characteristics \eqref{eq:characteristics}, we have
\begin{align*}
Z(t^{n+1};t^n,x) & = x + \int_{t^n}^{t^{n+1}} \achapo_\rho\bigl(s,Z(s;t^n,x)\bigr) ds 
\\
& = x - \int_{t^n}^{t^{n+1}} \int_{\RR^d}\nabWchapo\bigl(Z(s;t^n,x)-Z(s;t^n,\xi)\bigr) \rho(t^n,d\xi) ds.
\end{align*}
We recall also that, by definition \eqref{def:aij}, the approximating velocity is given by
\begin{align*}
a_L^n = - \sum_{J\in\ZZ^d} \rho_J^n \nabWchapo(x_L-x_J),
\end{align*}so that for $y$, a node of the mesh, 
\begin{equation*}
y + \Delta t a^n_{\Delta x}(y) 
= y - \Delta t \int_{\RR^d} 
\nabWchapo( y - \zeta) \rho^n_{\Delta x}( d \zeta \bigr). 
\end{equation*}
Thus, by a straightforward expansion and still for $y$ a node of the mesh, 
\begin{align*}
&\bigl|Z(t^{n+1};t^n,x)-y-\Delta t a^n_{\Delta x}(y)\bigr|^2 \leq  |x-y|^2  \\
& - 2\int_{t^n}^{t^{n+1}}\iint_{\RR^d\times \RR^d} \bigl\langle x-y, \nabWchapo\bigl(Z(s;t^n,x)-Z(s;t^n,\xi)\bigr)-\nabWchapo(y-\zeta)\bigr\rangle \rho(t^n,d\xi) \rho_{\Delta x}^n(d\zeta) + C\Delta t^2.
\end{align*}
By definition of the optimal plan $\gamma\in \Gamma_0(\rho(t^n),\rho^n_{\Delta x})$, we also have
\begin{align*}
& \iint_{\RR^d\times \RR^d} \bigl\langle x-y, \nabWchapo\bigl(Z(s;t^n,x)-Z(s;t^n,\xi)\bigr)-\nabWchapo(y-\zeta)\bigr\rangle \rho(t^n,d\xi) \rho_{\Delta x}^n(d\zeta)  \\
& = 
\iint_{\RR^d\times \RR^d} \bigl\langle x-y, \nabWchapo\bigl(Z(s;t^n,x)-Z(s;t^n,\xi)\bigr)-\nabWchapo(y-\zeta)\bigr\rangle \gamma(d\xi,d\zeta)
\end{align*}
Injecting into \eqref{eqD1}, we get
\begin{align*}
D_{n+1}^2 \leq & \ D_n^2 + C \Delta t(\Delta t+\Delta x) \\
& - 2 \int_{t^n}^{t^{n+1}}\iint_{\RR^d\times \RR^d}\iint_{\RR^d\times \RR^d} \bigl\langle x-y, \nabWchapo\bigl(Z(s;t^n,x)-Z(s;t^n,\xi)\bigr)-\nabWchapo(y-\zeta) \bigr\rangle  \\
& \hspace{10cm} \gamma(d\xi,d\zeta) \gamma(dx,dy).
\end{align*}
Decomposing $x-y=x-Z(s;t^n,x)+Z(s;t^n,x)-y$ and using the fact that 
$|Z(s;t^n,x)-x|\leq w_\infty |s-t^n|$, we get
\begin{align*}
D_{n+1}^2 \leq & \ D_n^2 + C \Delta t(\Delta t+\Delta x) \\
& - 2 \int_{t^n}^{t^{n+1}}\iint_{\RR^d\times \RR^d}\iint_{\RR^d\times \RR^d} \bigl\langle Z(s;t^n,x)-y, \nabWchapo\bigl(Z(s;t^n,x)-Z(s;t^n,\xi)\bigr)-\nabWchapo(y-\zeta)\bigr\rangle  \\[-2mm]
& \hspace{11.7cm} \gamma(d\xi,d\zeta) \gamma(dx,dy).
\end{align*}
Then, we may use the symmetry of
the potential $W$ in assumption {\bf (A0)} for the last term to deduce
\begin{align*}
D_{n+1}^2 \leq & \ D_n^2 + C \Delta t(\Delta t+\Delta x) \\
& - \int_{t^n}^{t^{n+1}}\iint_{\RR^d\times \RR^d}\iint_{\RR^d\times \RR^d} \bigl\langle Z(s;t^n,x)-Z(s;t^n,\xi)-y+\zeta,  \\[-2mm]
& \hspace{4.7cm}  \nabWchapo\bigl(Z(s;t^n,x)-Z(s;t^n,\xi)\bigr)-\nabWchapo(y-\zeta)\bigr\rangle
\, \gamma(d\xi,d\zeta) \gamma(dx,dy).
\end{align*}
Moreover, from the $\lambda$-convexity of $W$ \eqref{lambdaconvWchapo}, we obtain
\begin{align*}
D_{n+1}^2 \leq & D_n^2 + C \Delta t(\Delta t+\Delta x) \\
& - \lambda \int_{t^n}^{t^{n+1}}\iint_{\RR^d\times \RR^d}\iint_{\RR^d\times \RR^d} \bigl|Z(s;t^n,x)-y-Z(s;t^n,\xi)+\zeta\bigr|^2
\, \gamma(d\xi,d\zeta) \gamma(dx,dy).
\end{align*}
Expanding the last term, we deduce
\begin{align}
D_{n+1}^2 \leq & \ D_n^2 + C \Delta t(\Delta t+\Delta x) 
- 2\lambda \int_{t^n}^{t^{n+1}} \iint_{\RR^d\times \RR^d} \bigl|Z(s;t^n,x)-y\bigr|^2
\,\gamma(dx,dy)  \nonumber \\
& + 2\lambda \int_{t^n}^{t^{n+1}} \biggl\vert \iint_{\RR^d\times \RR^d} \bigl(Z(s;t^n,x)-y\bigr)
\,\gamma(dx,dy) \biggr\vert^2.  \label{interm2}
\end{align}

{\bf 3rd step.} 
Now we distinguish between the two cases $\lambda\leq 0$ and $\lambda>0$.

(i) Starting with the case $\lambda\leq 0$, we have that the last term in \eqref{interm2} is nonpositive.
Using Young's inequality and the estimate $|x-Z(s;t^n,x)| \leq w_\infty (s-t^n)$, we get, for any $\varepsilon>0$,
$$
\bigl|Z(s;t^n,x)-y \bigr|^2 \leq (1+\varepsilon) |x-y|^2 + (1+\frac{1}{\varepsilon}) w_\infty^2 |s-t^n|^2.
$$
Hence, injecting into \eqref{interm2}, we deduce
$$
D_{n+1}^2 \leq \bigl(1 + 2(1+\varepsilon)|\lambda| \Delta t \bigr) D_n^2 + C \Delta t\Big(\Delta x+\Delta t(1+\frac{\Delta t}{\varepsilon})\Big).
$$
Applying a discrete Gronwall inequality, we obtain
$$
D_n^2 \leq e^{2 (1+\varepsilon)|\lambda| t^n}\left(D_0^2+ C t^n\Big(\Delta x+\Delta t(1+\frac{\Delta t}{\varepsilon})\Big)\right).
$$
We recall that our choice of initial data implies $D_0=0$.
Finally, taking $\varepsilon=\Delta t$, we conclude
$$
d_W\bigl(\rho(t^n),\rho_{\Delta}^n\bigr) \leq C e^{(1+\Delta t)|\lambda| t^n} \sqrt{t^n(\Delta x+\Delta t)}.
$$
It allows to conclude the proof of Theorem \ref{TH} (i) in the case $\rho^{ini}=\rho_{\Delta x}^0$.

(ii) Considering now the case $\lambda>0$, we have
$$
\iint_{\RR^d\times \RR^d} \bigl(Z(s;t^n,x)-y \bigr)\,\gamma(dx,dy) = 
\int_{\RR^d} \bigl(Z(s;t^n,x)-x\bigr)\rho(t^n,dx)
+ \int_{\RR^d} x \rho(t^n,dx) - \sum_{J\in\ZZ^d} x_J \rho_J^n.
$$
By conservation of the center of mass, see Lemma \ref{bounddismom} (i), we deduce that 
$$
\int_{\RR^d} x \rho(t^n,dx) - \sum_{J\in\ZZ^d} x_J \rho_J^n = 
\int_{\RR^d} x \rho^{ini}(dx) - \sum_{J\in\ZZ^d} x_J \rho_J^0 = 0,
$$
since we have chosen the initial data such that $\rho^{ini}=\rho_{\Delta x}^0$.
Using also the bound $|Z(s;t^n,x)-x| \leq w_\infty (s-t^n)$, we may bound the last term of \eqref{interm2} by $w_\infty^2 \Delta t^2$.
Moreover, using again Young's inequality and the estimate $|Z(s;t^n,x)-x| \leq w_\infty (s-t^n)$, we have, for any $\varepsilon>0$,
$$
|x - y|^2 \leq (1+\varepsilon) \bigl|Z(s;t^n,x)-y \bigr|^2 + (1+\frac{1}{\varepsilon}) w_\infty^2 |s-t^n|^2.
$$
It implies, for any $\varepsilon\in (0,1)$,
\begin{align*}
- \bigl|Z(s;t^n,x)-y \bigr|^2 & \leq - \frac{1}{1+\varepsilon} |x-y|^2 + \frac{1}{\varepsilon} w_\infty^2 |s-t^n|^2  \\
& \leq -(1-\varepsilon) |x-y|^2 + \frac{1}{\varepsilon} w_\infty^2 |s-t^n|^2.
\end{align*}
Thus we deduce that
$$
- 2 \lambda \int_{t^n}^{t^{n+1}}\!\! \iint_{\RR^d\times\RR^d} \bigl|Z(s;t^n,x)-y \bigr|^2 \gamma(dx,dy)
\leq -2 \lambda (1-\varepsilon) \Delta t D_n
+ \frac 23 \frac{\lambda}{\varepsilon} w_\infty^2 \Delta t^3.
$$
Injecting this latter inequality into \eqref{interm2} and taking $\varepsilon=\Delta t$, we deduce
$$
D_{n+1}^2 \leq \bigl(1-2\lambda (1-\Delta t)\Delta t \bigr) D_n^2 + C \Delta t(\Delta t+\Delta x) 
$$
Hence, since $2\lambda (1-\Delta t)\Delta t<1$, we have by induction, recalling that $D_0=0$,
$$
D_n^2 \leq C \Delta t(\Delta t+\Delta x) \sum_{k=0}^{n-1} \bigl(1-2\lambda (1-\Delta t)\Delta t
\bigr)^{k} \leq \frac{C}{2(1-\Delta t)\lambda} (\Delta t+\Delta x).
$$
Using the assumption $\Delta t \leq 1/2$, we conclude the proof of Theorem \ref{TH} (ii) in the case $\rho^{ini}=\rho_{\Delta x}^0$.
\vskip 2pt

{\bf 4th step.}
We are left with the case $\rho^{ini}\neq \rho_{\Delta x}^0$. 
Let us define $\rho'(t)=Z'(t)_\#\rho_{\Delta x}^0$, the exact solution with
initial data $\rho_{\Delta x}^0$. From the triangle inequality, we have
$$
d_W\bigl(\rho(t^n),\rho_{\Delta x}^n\bigr) \leq d_W\bigl(\rho(t^n),\rho'(t^n)\bigr) + 
d_W\bigl(\rho'(t^n),\rho_{\Delta x}^n\bigr).
$$
The last term in the right hand side may be estimated thanks to the above computations. 
For the first term in the right hand side, we use the estimates in Theorem \ref{Exist} (we apply $(i)$ if $\lambda \leq 0$ and $(ii)$ if $\lambda >0$):
\[
d_W\bigl(\rho(t^n),\rho'(t^n)\bigr) \leq 
e^{(\lambda)^{-} t^n} d_W\bigl(\rho^{ini},\rho_{\Delta x}^0\bigr),
\]
where $(\lambda)^{-}=\max(-\lambda,0)$ is the negative part of $\lambda$. 

Let us define $\tau:[0,1]\times \RR^d \to \RR^d$ by
$\tau(\sigma,x) = \sigma x_J + (1-\sigma)x$, for $x \in C_J$.
We have that $\tau(0,\cdot)=\mathrm{id}$ and $\tau(1,\cdot)_\# \rho^{ini} = \rho_{\Delta x}^0$. Then 
\begin{equation}
\label{eq:wp:condition:initiale}
\begin{split}
d_W\bigl(\rho^{ini},\rho_{\Delta x}^0\bigr)^2 &\leq  \ds 
\int_{\RR^d\times \RR^d} |x-y|^2 \, \bigl[(\mathrm{id}\times\tau(1,\cdot))_\# \rho^{ini}\bigr](dx,dy)  
\\
& \leq \ds \sum_{J\in \ZZ^d} \rho_J^0 \int_{C_J} |x-x_J|^2 \,\rho^{ini}(dx).
\end{split}
\end{equation}
We deduce $d_W(\rho^{ini},\rho_{\Delta x}^0) \leq \Delta x$.
Then, we get
$$
d_W\bigl(\rho(t^n),\rho'(t^n)\bigr) \leq e^{(\lambda)^- t^n} \Delta x.
$$

\section{Unstructured mesh}
\label{sec:unstruct}

We can extend our convergence result to more general meshes.
For the sake of simplicity
of the notation, we present the case of a triangular mesh in two dimensions.
This approach can be easily extended to meshes made of simplices, in any dimension.

\subsection{Forward semi-Lagrangian scheme}

Let us consider a triangular mesh $\calT = (T_k)_{k\in \ZZ}$ with nodes $(x_i)_{i\in \ZZ}$. We assume this mesh to be conformal: A summit cannot belong to an open edge of the grid. 
The triangles $(T_k)_{k \in \ZZ}$ are assumed to satisfy $\bigcup_{k\in\ZZ} T_k = \RR^2$ and $T_k \cap T_l = \emptyset$ if $k \neq l$ (in particular, the cells are here not assumed to be closed nor open). For any triangle $T$ with summits $x$, $y$, $z$, we will use also the notation
$(x,y,z) = T$. We denote by $\calV(T) = \calV(x,y,z)$ the area of this
triangle, and $h(T)$ its height (defined as the minimum of the three heights of the triangle $T$).
We make the assumption that the mesh satisfies $\hbar:=\inf_{k\in \ZZ} h(T_k) >0$.

For any node $x_i$, $i \in \ZZ$, we denote by $K(i)$ the set of 
indices indexing triangles that have $x_i$ as a summit, and we denote by $\calT_i$ the set of all 
triangles of $\calT$ that have $x_i$ as a summit: thus $\calT_i = \{T_k ; k \in K(i) \}$.

For any triangle $T_k$, $k \in \ZZ$, we denote by 
\[
I(k) = \{I_1(k), I_2(k), I_3(k)\}
\]
the set of indices indexing the summits of $T_k$
(for some arbitrary order, whose choice has no importance for the sequel).

We consider the following scheme, which may be seen as a forward semi-Lagrangian scheme on the triangular mesh.
\begin{itemize}
\item For an initial distribution $\rho^{ini}$
of the PDE \eqref{EqInter}, define
the probability weights $(\rho^0_{i})_{i \in \ZZ}$
through the following procedure: 
Consider the one-to-one mapping $\iota : 
\ZZ \ni k \mapsto \iota(k) \in \ZZ$ such that, for 
each $k \in \ZZ$, $x_{\iota(k)}$ is a node of the triangle $T_k$; 
$\iota$ is thus a way to associate a node 
with a cell; then,
for all $i \in \ZZ$, 
let
$\rho^0_{i} = \sum_{k:\iota(k) = i}\rho^{ini}(T_{k})$.
Observe from 
\eqref{eq:wp:condition:initiale}
 that 
$\rho^0_{\Delta x} = \sum_{j \in \ZZ} \rho^0_{j} \delta_{x_{j}}$ is an approximation of $\rho^{ini}$. 
\item Assume that, for a given $n \in \NN$,  
we already have 
probability weights  
$(\rho_i^n)_{i\in\ZZ}$ 
such that 
$\rho^n_{\Delta x} =
\sum_{j \in \ZZ} \rho^n_{j} \delta_{x_{j}}$
is an approximation of $\rho(t^n,\cdot)$, 
where $\rho$ is the solution to 
\eqref{EqInter}
with 
$\rho^{ini}$
as initial condition.
For $i \in \ZZ$, 
we let
\[
a_i^n:=-\int_{\RR^d} \nabWchapo(x_i-y)\,\rho_{\Delta x}^n(dy), \qquad \mbox{and} \qquad
y_i^n:= x_i + a_i^n \Delta t.
\]
Under the CFL-like condition
\begin{equation}\label{CFLT}
w_\infty \Delta t \leq \hbar, 
\end{equation}
$y_i^n$ belongs to one (and only one) of the elements of $\calT_i$.
We denote by $k_{i}^n$ the index of this triangle, namely 
$y_i^n\in T_{k_{i}^n}$.
\item 
We use a linear splitting rule between the summits of 
the triangle $T_{k_i^n}$: the mass $\rho_i^n$ is sent to the three points 
$x_{I_1(k_i^n)}$, $x_{I_2(k_i^n)}$, $x_{I_3(k_i^n)}$ according to the 
{\em barycentric coordinates} of $y_i^n$ in the triangle. 
\end{itemize}

\begin{center}
\begin{tikzpicture}
\draw (0,0) -- (5,0) -- (3,4) -- (0,0);
\draw (0,0) node[below]{$x_i =x_{I_1(k_{i}^n)}$};
\draw (5,0) node[below]{$x_{I_2(k_{i}^n)}$};
\draw (3,4) node[above]{$x_{I_3(k_{i}^n)}$};
\draw (2.5,2) node[below]{$y_i^n$};
\draw [->] (0,0) -- (2.5,2);
\draw [dashed] (5,0) -- (2.5,2) -- (3,4);
\end{tikzpicture}
\end{center}

Let us make more precise the latter point. Let $T = (x,y,z) \in \calT$, and $\xi \in T$. 
We define the barycentric coordinates of $\xi$ with respect 
to $x$, $y$ and $z$, $\lambda_{x}^{T}$, 
$\lambda_{y}^{T}$ and $\lambda_{z}^{T}$: 
\begin{equation}\label{lambdaetal}
\lambda_{x}^{T}(\xi) = \frac{\calV(\xi,y,z)}{\calV(T)}, \quad 
\lambda_{y}^{T}(\xi) = \frac{\calV(\xi,x,z)}{\calV(T)}, \quad 
\lambda_{z}^{T}(\xi) = \frac{\calV(\xi,x,y)}{\calV(T)}, \quad 
\end{equation}
and then have $\xi = \lambda_{x}^{T}(\xi) x + \lambda_{y}^{T}(\xi) y 
+ \lambda_{z}^{T}(\xi) z$. 
Note also that $\lambda_{x}^{T}(\xi) + \lambda_{y}^{T}(\xi) 
+ \lambda_{z}^{T}(\xi) = 1$. Therefore, we have the following fundamental 
property, which will be used in the sequel: 
\begin{equation}
\label{fundapropo}
\lambda_{x}^{T}(\xi) (x - \zeta) + 
\lambda_{y}^{T}(\xi) (y - \zeta)
+ \lambda_{z}^{T}(\xi) (z - \zeta) = \xi - \zeta,
\end{equation}
for any $\zeta \in \RR^2$. 

In the same spirit as in Section \ref{sec:ordre}, we here define the interpolation weights by: For $j\in\ZZ$, and $y\in\RR^2$,
\begin{equation}
\label{def:alphaT}
\alpha_j(y) :=
\left\{\begin{array}{ll}
\lambda_{x_j}^{T}(y),  & \textrm{when} \ y\in T,
\vspace{5pt}
\\
0, \ \ & \textrm{otherwise.}
\end{array}\right.
\end{equation}
Then, the numerical scheme reads
\begin{equation}\label{schemeT}
\rho_j^{n+1} = \sum_{i\in \ZZ} \rho_i^n \alpha_{j}(x_i+a_i^n\Delta t), 
\qquad j \in \ZZ, \ n \in \NN.
\end{equation}
We easily
verify  from \eqref{lambdaetal} and \eqref{fundapropo} that the interpolation weights satisfy:
\begin{lemma}\label{propalphaT}
Let $(\alpha_{j}(y))_{j \in \ZZ,y \in \RR^2}$ be defined as in \eqref{def:alphaT}.
Then, for any $j \in \ZZ$ and $y \in \RR^2$, $\alpha_{j}(y) \geq 0$. Moreover, 
for any $y \in \RR^2$, 
$$
\sum_{j\in\ZZ} \alpha_j(y) = 1, \qquad
\sum_{j\in\ZZ} x_j \alpha_j(y) = y.
$$
\end{lemma}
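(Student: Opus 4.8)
The plan is to transcribe the proof of Lemma \ref{propalpha} to the simplicial setting, the only new ingredient being that the triangles $(T_k)_{k\in\ZZ}$ form a genuine partition of $\RR^2$. First I would fix $y\in\RR^2$ and use the hypotheses $\bigcup_{k\in\ZZ}T_k=\RR^2$ and $T_k\cap T_l=\emptyset$ for $k\neq l$ to single out the unique index $k$ with $y\in T_k$; write $T_k=(p,q,r)$ with its summits indexed by $I(k)=\{I_1(k),I_2(k),I_3(k)\}$. By the definition \eqref{def:alphaT}, $\alpha_j(y)=0$ unless $x_j$ is one of these three summits, so every sum $\sum_{j\in\ZZ}(\cdots)\alpha_j(y)$ collapses to the three terms indexed by $j\in I(k)$.

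Nonnegativity is then immediate from \eqref{lambdaetal}: for $y\in T_k$ each of the sub-triangles $(y,q,r)$, $(y,p,r)$, $(y,p,q)$ is contained in $T_k$, hence has area lying between $0$ and $\calV(T_k)$, so that $\lambda^{T_k}_p(y),\lambda^{T_k}_q(y),\lambda^{T_k}_r(y)\in[0,1]$. For the two identities I would invoke the two elementary properties of barycentric coordinates already recorded just after \eqref{lambdaetal}, namely $\lambda^{T_k}_p(y)+\lambda^{T_k}_q(y)+\lambda^{T_k}_r(y)=1$ and $\lambda^{T_k}_p(y)\,p+\lambda^{T_k}_q(y)\,q+\lambda^{T_k}_r(y)\,r=y$ (the latter being \eqref{fundapropo} taken with $\zeta=0$). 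Since the only nonzero weights are those attached to the three summits of $T_k$, summing these relations yields exactly $\sum_{j\in\ZZ}\alpha_j(y)=1$ and $\sum_{j\in\ZZ}x_j\alpha_j(y)=y$.

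The only point deserving a word of care — and the closest thing to an obstacle, though it is really just bookkeeping — is the well-posedness of \eqref{def:alphaT} when $y$ lies on an edge or at a vertex of the triangulation, where several triangles meet. The partition property of $(T_k)_k$ formally assigns $y$ to a single cell; moreover, thanks to conformality (a summit cannot sit on an open edge), the barycentric coordinates associated with any two triangles sharing that edge agree along it (the coordinate of the opposite vertex vanishing), so the value of $\alpha_j(y)$ is in fact independent of which incident triangle one picks. Beyond this, there is nothing more to prove: the statement is a direct analogue of Lemma \ref{propalpha}.
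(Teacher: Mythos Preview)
Your proof is correct and follows exactly the route indicated by the paper, which merely states that the lemma is ``easily verified from \eqref{lambdaetal} and \eqref{fundapropo}'' without giving further details. Your write-up simply makes explicit the one-line verification the authors leave to the reader, together with a harmless bookkeeping remark on boundary points.
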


\subsection{Convergence result}

By the same token as in Section \ref{sec:ordre}, we can use Lemma \ref{propalphaT} and Theorem \ref{Exist}
to prove that the numerical scheme \eqref{schemeT} is of order $1/2$:
\begin{theorem}
\label{TH2}
Assume that $W$ satisfies hypotheses {\bf (A0)--(A3)}.
For $\rho^{ini} \in \calP_2(\RR^d)$, let $(\rho(t))_{t \ge 0}$ be the unique measure solution to the aggregation equation with initial data $\rho^{ini}$, as
given by Theorem \ref{Exist}.
Let us also consider a triangular conformal mesh $(T_k)_{k\in \ZZ}$ with nodes $(x_j)_{j\in \ZZ}$ 
such that $\hbar = \inf_{k\in \ZZ} h(T_k) >0$
and the CFL condition \eqref{CFLT} holds true. 
We denote by 
$\Delta x$ the longest edge in the mesh. 

Define $((\rho_j^n)_{j\in \ZZ})_{n \in {\mathbb N}}$
as in 
\eqref{schemeT}
and let
$$
\rho_{\Delta x}^n := \sum_{j\in \ZZ} \rho_j^n \delta_{x_j}, 
\quad n \in {\mathbb N}. 
$$
Then, there exists a nonnegative constant $C$, 
independent of the discretization parameters, such that, for all $n\in \NN^*$,
$$
d_W(\rho(t^n),\rho_{\Delta x}^n) \leq C e^{|\lambda|(1+\Delta t) t^n} \bigl( \sqrt{t^n \Delta x} + \Delta x \bigr).
$$
\end{theorem}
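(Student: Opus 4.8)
The plan is to mirror, essentially verbatim, the proof of Theorem~\ref{TH}(i) in the case $\lambda\le0$ (only this case occurs here, since \textbf{(A0)--(A3)} force $\lambda\le0$), with Lemma~\ref{propalpha} replaced by Lemma~\ref{propalphaT} and the Cartesian cells replaced by the triangles $T_k$. First I would reduce to the case $\rho^{ini}=\rho^0_{\Delta x}:=\sum_j\rho^0_j\delta_{x_j}$. By the triangle inequality and the Dobrushin stability estimate of Theorem~\ref{Exist}, $d_W(\rho(t^n),\rho^n_{\Delta x})\le e^{|\lambda|t^n}d_W(\rho^{ini},\rho^0_{\Delta x})+d_W(\rho'(t^n),\rho^n_{\Delta x})$, where $\rho'$ is the exact solution started from $\rho^0_{\Delta x}$; and since $x_{\iota(k)}$ is a vertex of $T_k$, transporting the mass of $T_k$ onto $x_{\iota(k)}$ gives $d_W(\rho^{ini},\rho^0_{\Delta x})^2\le\sum_k\int_{T_k}|x-x_{\iota(k)}|^2\rho^{ini}(dx)\le\Delta x^2$ (cf.\ \eqref{eq:wp:condition:initiale}), i.e.\ $d_W(\rho^{ini},\rho^0_{\Delta x})\le\Delta x$. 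This accounts for the additive $\Delta x$ in the final bound, so from now on I take $\rho^{ini}=\rho^0_{\Delta x}$, set $D_n:=d_W(\rho(t^n),\rho^n_{\Delta x})$, and note $D_0=0$.

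Next, for each $n$ I would fix an optimal plan $\gamma\in\Gamma_0(\rho(t^n),\rho^n_{\Delta x})$ and build a competitor $\gamma'\in\Gamma(\rho(t^{n+1}),\rho^{n+1}_{\Delta x})$ by pushing $\gamma$ through the kernel sending $(x,y)$ to $Z(t^{n+1};t^n,x)$ in the first coordinate ($Z=Z_\rho$ being the Filippov flow of Theorem~\ref{Exist}) and to $x_j$ with weight $\alpha_j(y+\Delta t\,a^n_{\Delta x}(y))$ in the second, where $a^n_{\Delta x}$ is any measurable reconstruction of the nodal velocities with $a^n_{\Delta x}(x_i)=a_i^n$ (only the nodal values matter, since $\rho^n_{\Delta x}$ charges only the nodes; a continuous per-triangle affine reconstruction will do). That $\gamma'$ has the correct marginals follows from Theorem~\ref{Exist} (which yields $Z(t^{n+1};t^n,\cdot)_\#\rho(t^n)=\rho(t^{n+1})$ by the semigroup property), from $\sum_j\alpha_j\equiv1$, and from the definition \eqref{schemeT} of the scheme. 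Hence $D_{n+1}^2\le\iint\sum_j|Z(t^{n+1};t^n,x)-x_j|^2\,\alpha_j(y+\Delta t\,a^n_{\Delta x}(y))\,\gamma(dx,dy)$.

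I would then use both identities of Lemma~\ref{propalphaT} for the bias--variance split: writing $Z=Z(t^{n+1};t^n,x)$ and $m=y+\Delta t\,a^n_{\Delta x}(y)$, the identities $\sum_jx_j\alpha_j(m)=m$ and $\sum_j\alpha_j(m)=1$ give $\sum_j|Z-x_j|^2\alpha_j(m)=|Z-m|^2+\sum_j|x_j-m|^2\alpha_j(m)$. The variance term $\sum_j|x_j-m|^2\alpha_j(m)$, evaluated at the nodes $y=x_i$ (so $m=y_i^n=x_i+\Delta t\,a_i^n$), is a convex combination over the three vertices of the single triangle $T_{k_i^n}\ni y_i^n$, the locality of this splitting being exactly what the CFL condition \eqref{CFLT} provides; since $|x_j-y_i^n|\le\mathrm{diam}(T_{k_i^n})\le\Delta x$ for those vertices, $|x_i-y_i^n|\le w_\infty\Delta t$, and the barycentric weight at either vertex other than $x_i$ is, by \eqref{lambdaetal}, bounded by $C\,w_\infty\Delta t\,\Delta x/\hbar^2$, this term is $\le C\Delta t(\Delta t+\Delta x)$ with $C$ depending on $w_\infty$ and on the shape-regularity ratio $\Delta x/\hbar$. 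The bias term $|Z-y-\Delta t\,a^n_{\Delta x}(y)|^2$ is handled exactly as in Steps~2 and~3 of the proof of Theorem~\ref{TH}: expand the square, use that $Z$ solves the characteristic equation \eqref{eq:characteristics} and that $a^n_{\Delta x}(x_i)=-\int\nabWchapo(x_i-\zeta)\,\rho^n_{\Delta x}(d\zeta)$, pass to the optimal plan $\gamma$, invoke $|Z(s;t^n,x)-x|\le w_\infty(s-t^n)$, the oddness of $\nabWchapo$ from \textbf{(A0)}, and the $\lambda$-convexity inequality \eqref{lambdaconvWchapo}; since $\lambda\le0$ the resulting quadratic term has the favorable sign, and Young's inequality yields $D_{n+1}^2\le(1+2(1+\varepsilon)|\lambda|\Delta t)D_n^2+C\Delta t(\Delta x+\Delta t(1+\Delta t/\varepsilon))$. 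A discrete Grönwall inequality with $\varepsilon=\Delta t$ and $D_0=0$ then gives $D_n\le Ce^{(1+\Delta t)|\lambda|t^n}\sqrt{t^n(\Delta x+\Delta t)}$; since \eqref{CFLT} forces $w_\infty\Delta t\le\hbar\le\Delta x$, we have $\Delta t+\Delta x\le C\Delta x$, and combining with the reduction of the first paragraph yields the asserted bound.

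The main obstacle is the variance estimate. In the Cartesian setting the off-diagonal weights are visibly $O(\Delta t/\Delta x_i)$, whereas here this smallness must be extracted from the barycentric coordinates \eqref{lambdaetal} of $y_i^n$ in $T_{k_i^n}$, and the bound inevitably involves the mesh geometry through $\hbar$ (equivalently, through a shape-regularity constant $\Delta x/\hbar$ that one tacitly assumes bounded); some care is needed to keep this dependence explicit. On the other hand, positivity of the interpolation weights is immediate here, barycentric coordinates of an interior point being nonnegative, which is one point where the triangular case is \emph{simpler} than Section~\ref{sec:ordre}.
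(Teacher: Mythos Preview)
Your proposal is correct and follows precisely the route the paper intends: the paper's own ``proof'' of Theorem~\ref{TH2} consists of the single sentence that one repeats the argument of Section~\ref{sec:ordre} with Lemma~\ref{propalphaT} in place of Lemma~\ref{propalpha}, which is exactly what you do. Your identification of the variance estimate as the only genuinely new ingredient is on point; note that your bound on the off-vertex barycentric weights can be sharpened to $\lambda_{j}(y_i^n)\le w_\infty\Delta t/\hbar$ (take the edge $x_i x_{j'}$ as base in both numerator and denominator of \eqref{lambdaetal}), but either estimate shows that the constant $C$ in the conclusion necessarily depends on the shape-regularity ratio $\Delta x/\hbar$, a dependence the paper leaves implicit.
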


Importantly, we do not claim that 
$(ii)$ in the statement of Theorem \ref{TH} remains true in the framework of Theorem \ref{TH2}. Indeed, it would require to prove that the support of the numerical solution remains included in a ball when the support of the initial condition is bounded. As made clear by 
the proof of Lemma 
\ref{lem:CFL:lambda:>0}, this latter fact depends on the geometry of the mesh.

\section{Numerical illustrations}
\label{sec:sim}

We now address several numerical examples. In 
Subsection 
\ref{subsec:optimality}, 
we show that the rate of convergence established in 
Theorem \ref{TH} is optimal in a one-dimensional 
example. This prompts us to start with a short reminder 
on the Wasserstein distance in dimension $d=1$. 
In Subsection
\ref{subse:newtonian}, we provide several numerical examples in 
dimension $d=1$ for the Newtonian potential, whilst examples in 
dimension $d=2$ are handled in Subsection 
\ref{subsec:numerical:d2}.

\subsection{Wasserstein distance in one dimension}
The numerical computation of the Wasserstein distance between two probablity measures 
in any dimension is generally quite difficult. However, in dimension $d=1$, there is an explicit expression of the Wasserstein distance and this allows
for
direct computations, including numerical purposes, as shown  
in the pioneering work \cite{GT}.
Indeed, any probability measure $\mu$ on the real line $\RR$
can be described thanks to its cumulative distribution function
$F(x)=\mu((-\infty,x ])$, which is a right-continuous
and non-decreasing function with $F(-\infty)=0$ and $F(+\infty)=1$.
Then we can define the generalized inverse $Q_\mu$ of $F$ (or monotone rearrangement of $\mu$)
by $Q_\mu(z)=F^{-1}(z):=\inf\{x\in \RR : F(x) > z\}$; 
it is a right-continuous and non-decreasing function, defined on $[0,1)$. 
For every non-negative Borel-measurable map $\xi : \RR \rightarrow \RR$, we have
$$
\int_\RR \xi(x) \mu(dx) = \int_0^1 \xi(Q_\mu(z))\,dz.
$$
In particular, $\mu\in \calP_2(\RR)$ if and only if 
$Q_\mu\in L^2((0,1))$.
Moreover, in the one-dimensional setting, there exists a unique optimal transport plan
realizing the minimum in \eqref{defWp}.
More precisely, if $\mu$ and $\nu$ belong to $\calP_p(\RR)$, with monotone rearrangements
$Q_\mu$ and $Q_\nu$, then $\Gamma_0(\mu,\nu)=\{(Q_\mu,Q_\nu)_\# {\mathbb{L}}_{(0,1)}\}$ 
where 
${\mathbb{L}}_{(0,1)}$ is the restriction to $(0,1)$ of the Lebesgue measure.
Then we have the explicit expression of the Wasserstein distance (see \cite{rachev,Villani2}) 
\begin{equation}\label{dWF-1}
d_{W}(\mu,\nu) = \left(\int_0^1 |Q_\mu(z)-Q_\nu(z)|^2\,dz\right)^{1/2},
\end{equation}
and the map $\mu \mapsto Q_\mu$ 
is an isometry between $\calP_2(\RR)$ and the convex subset
of (essentially) non-decreasing functions of 
$L^2([0,1))$.

We will take advantage of this expression \eqref{dWF-1} of the Wasserstein distance in dimension 1 in our numerical simulations to estimate the numerical error of the upwind scheme \eqref{dis_num}.
This scheme in dimension 1 on a Cartesian mesh reads, with time step 
$\Delta t$ and cell size $\Delta x$:
\begin{equation}
\label{scheme1D}
\rho_j^{n+1} = \rho_j^n - \frac{\Delta t}{\Delta x}\left((a_{j}^n)^+ \rho_j^n 
-(a_{j+1}^n)^- \rho_{j+1}^n - (a_{j-1}^n)^+ \rho_{j-1}^n
+(a_{j}^n)^- \rho_j^n \right).
\end{equation}
With this scheme, we define the probability measure $\rho_{\Delta x}^n =\sum_{j\in \ZZ} \rho_j^n\delta_{x_j}$.
Then the generalized inverse of $\rho_{\Delta x}^n$, denoted by 
$Q_{\Delta x}^{n}$, is given by
\begin{equation}
\label{Fdelta}
Q_{\Delta x}^{n}(z) = x_{j+1}, \qquad \mbox{for } z\in \Big[\sum_{k\leq j}\rho_{k}^n,\sum_{k\leq j+1}\rho_{k}^n\Big).
\end{equation}

\subsection{Optimality of the order of convergence}
\label{subsec:optimality}

Thanks to formula \eqref{dWF-1} in dimension $d=1$, we can verify numerically 
the optimality of our result.
Let us consider the potential $W(x)=2x^2$ for $|x|\leq 1$ and $W(x)=4|x|-2$ for $|x|> 1$;
such a potential verifies our assumptions {\bf (A0)--(A3)} with 
$\lambda=0$.
We choose the initial datum
$\rho^{ini}=\frac 12 \delta_{-x_0}+\frac 12 \delta_{x_0}$ with $x_0=0.25$.
Then the solution to the aggregation equation \eqref{EqInter} is given by
$$
\rho(t) = \frac 12 \delta_{-x_0(t)} + \frac 12 \delta_{x_0(t)}, \qquad x_0(t)=\frac 14 e^{-4t}, \qquad t \ge 0.
$$
The generalized inverse $Q_{\rho}(t,\cdot)= Q_{\rho(t)}$ of $\rho(t)$ is given, for $z\in [0,1)$,
by $Q_{\rho}(t,z) = -x_0(t)$ if $z\in [0,1/2)$, and $Q_{\rho}(t,z) = x_0(t)$ if $z\in [1/2,1)$.
Therefore, letting $u_j^n:=\sum_{k\leq j} \rho_k^n$ for $j \in \ZZ$, we can easily  compute the error at time $t^n=n\Delta t$
by means of the two formulas \eqref{dWF-1}--\eqref{Fdelta}:
$$
e_n:=d_W\bigl(\rho(t^n),\rho_{\Delta x}^n\bigr) = \sum_{k\in\ZZ} \int_{u_{k-1}^n}^{u_k^n} |x_{k} - Q_{\rho}(t^n,z)|dz.
$$
We then define the numerical error as $e:=\max_{n\leq T/\Delta t} e_n$.
We display in Figure \ref{fig:error} the numerical error with respect to the number
of nodes in logarithmic scale, as computed with the above procedure (the time steps being chosen in a such a way that the ratio \eqref{CFL} in the CFL condition is kept constant).
We observe that the computed numerical error is of order $1/2$.

\begin{figure}[!ht]
\centering
\includegraphics[width=8cm,height=6.5cm]{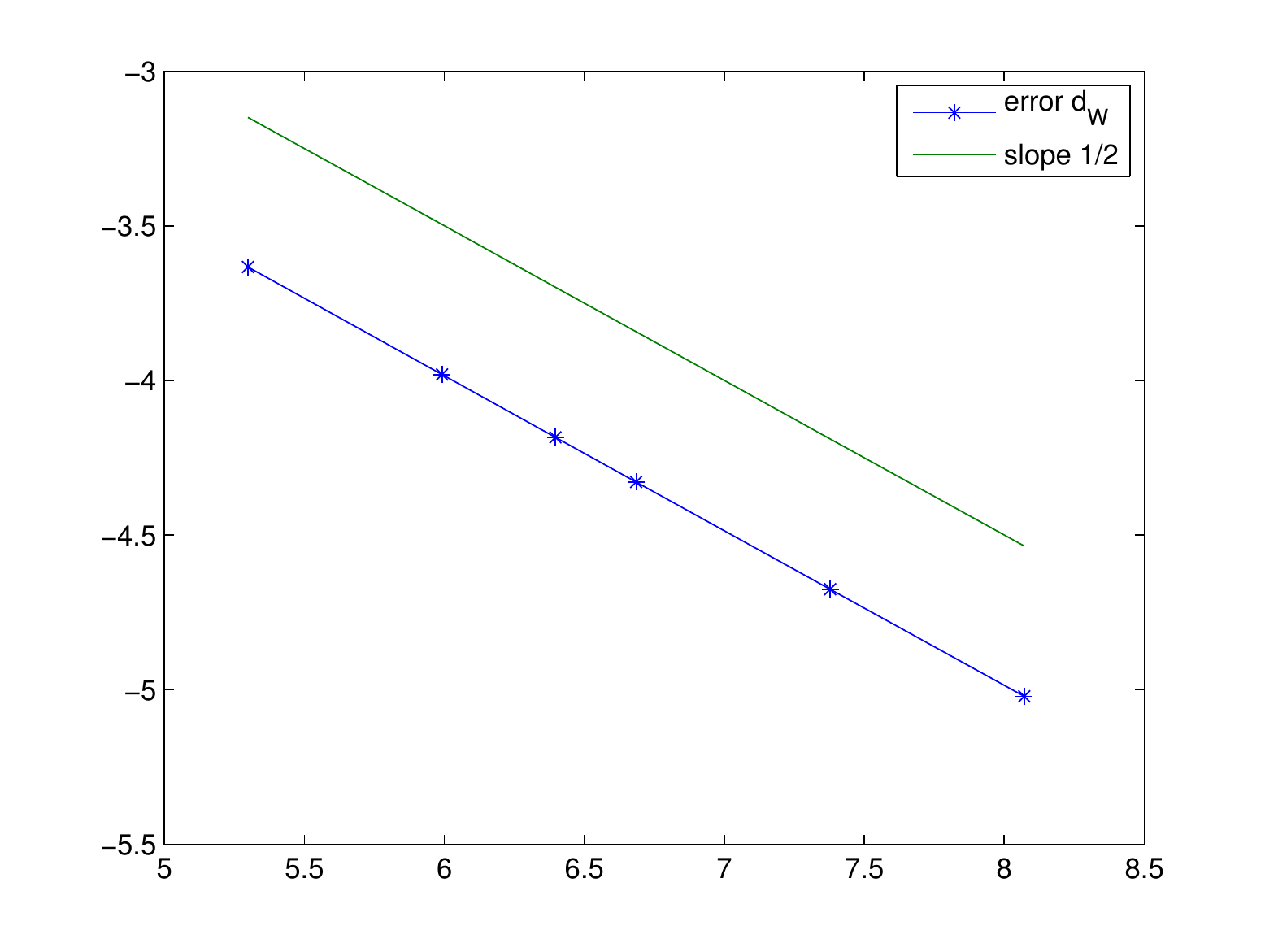}
\caption{Numerical error with respect to the number of nodes in logarithmic scale for the upwind scheme 
in Wasserstein distance for the potential $W$ defined by $W(x)=2x^2$ for $|x|\leq 1$
and $W(x)=4|x|-2$ for $|x|>1$, and an initial datum given by the sum of two Dirac deltas.}
\label{fig:error}
\end{figure}

\subsection{Newtonian potential in one dimension}
\label{subse:newtonian}

An interesting and illustrative example is the Newtonian potential in dimension $d=1$. 
Let us indeed consider the case $W(x)=|x|$ and an initial datum given by the sum of two masses
located at points $x_{i_1}$ and $x_{i_2}$ of the grid mesh, namely
$\rho^{ini}=\frac 12 \delta_{x_{i_1}}+\frac 12 \delta_{x_{i_2}}$, 
with say $x_{i_1}<x_{i_2}$.
The solution of the aggregation equation in Theorem \ref{Exist} is given by
$\rho(t) =\frac 12 \delta_{x_1(t)}+ \frac 12 \delta_{x_2(t)}$, where
$$
x_1(t) = x_{i_1} + \frac{t}{2}, \qquad
x_2(t) = x_{i_2} - \frac{t}{2}, \qquad  \mbox{for } \  t<x_{i_2}-x_{i_1}.
$$
Indeed, recalling definition \eqref{achapo}, we have, for $t<x_{i_2}-x_{i_1}$:
$$
\achapo_{\rho}(t,x) = \left\{ \begin{array}{cl}
1, \qquad & \mbox{ if } x<x_1(t), \\[1mm]
\frac 12, \qquad & \mbox{ if } x=x_1(t), \\[1mm]
0, \qquad & \mbox{ if } x_1(t)<x<x_2(t), \\[1mm]
-\frac 12, \qquad & \mbox{ if } x=x_2(t),  \\[1mm]
- 1, \qquad & \mbox{ if } x>x_2(t).
\end{array}\right.
$$
At $t= x_{i_2}-x_{i_1}$, the two particles collapse, then for $t\geq x_{i_2}-x_{i_1}$, we have 
$\rho(t)=\delta_{\frac 12 (x_{i_1}+x_{i_2})}$.
\\

{\bf Standard finite volume upwind scheme.} 
This simple example explains why we have chosen the scheme \eqref{scheme1D}
instead of the standard finite volume upwind scheme introduced in 
Subsection 
\ref{subse:potential:nonincreasing}. In dimension 
$d=1$ and 
on a Cartesian grid, this latter one reads
\begin{equation}\label{badscheme}
\rho_i^{n+1} = \rho_i^n - \frac{\Delta t}{\Delta x} \left((a_{i+1/2}^n)^+ \rho_i^n 
-(a_{i+1/2}^n)^- \rho_{i+1}^n - (a_{i-1/2}^n)^+ \rho_{i-1}^n + (a_{i-1/2}^n)^- \rho_{i}^n \right),
\end{equation}
where $a_{i+1/2}^n=-\sum_{k\in\ZZ} \rho_k^n \mbox{ sign} (x_{i+1/2}-x_k)$.

Assume indeed that, at time $t^n$, for some $n \in \NN$, we have obtained the approximation
$\rho_i^n = 0$ for $i\in\ZZ\setminus\{i_1,i_2\}$, and $\rho_{i_1}^n=\rho_{i_2}^n=1/2$.
We then compute 
$$
a_{i+1/2}^n = 
\left\{
\begin{array}{cl}
1, \quad &\mbox{for } i<i_1\\ 
0, \quad &\mbox{for } i_1\leq i < i_2\\ 
-1, \quad &\mbox{for } i\geq i_2.
\end{array}
\right.
$$
So, when applying the upwind scheme for $i\in \{i_1-1,i_1,i_1+1\}$, we get
$$
\begin{array}{l}
\ds \rho_{i_1-1}^{n+1} = \rho_{i_1-1}^n - \frac{\Delta t}{\Delta x}\left(\rho_{i_1-1}^n-\rho_{i_1-2}^n\right) = 0, \\[2mm]
\ds \rho_{i_1}^{n+1} = \rho_{i_1}^n + \frac{\Delta t}{\Delta x}\rho_{i_1-1}^n = \rho_{i_1}^n, \\[2mm]
\ds \rho_{i_1+1}^{n+1} = \rho_{i_1+1}^n = 0. \\
\end{array}
$$
Doing the same computation for $i\in \{i_2-1,i_2,i_2+1\}$, we deduce that 
$\rho^{n+1}=\rho^n$.
Thus the above upwind scheme may be not able to capture the correct dynamics of Dirac deltas.
The above computation is illustrated by the numerical results in Figure \ref{fig:wrong},
where a comparison between the numerical results obtained with \eqref{badscheme} (left)
and with \eqref{scheme1D} (right) is displayed.
We observe that the Dirac deltas are stationary when using the scheme \eqref{badscheme},
whereas the scheme \eqref{scheme1D} permits to catch the right dynamics.
Another interesting numerical illustration of this phenomenon is provided by Figure \ref{fig:wrong_exp}. In this example, we choose the potential
$W(x)=1-e^{-2|x|}$, which is $-4$-convex,  and a smooth initial datum given by the sum of two Gaussian functions:
$\rho^{ini}(x) = \frac{1}{M}(e^{-20(x-0.5)^2} + e^{-20(x+0.5)^2}$), {where $M=\|\rho^{ini}\|_{L^1}$ is a normalization coefficient.}
With this choice, we observe that the solution blows-up quickly. 
Dirac deltas appear in finite time and, as observed above, the scheme \eqref{badscheme}
(Fig. \ref{fig:wrong_exp}-left)
does not allow to capture the dynamics after blow-up time, whilst the scheme \eqref{scheme1D}
(Fig. \ref{fig:wrong_exp}-right) succeeds to do so.
For these numerical simulations, the numerical spatial domain is $[-1.25,1.25]$; it is discretized with a uniform Cartesian grid of $800$ nodes, and the ratio in the CFL condition \eqref{CFL} is $1/2$. 
\\

\begin{figure}[!ht]
\centering\includegraphics[width=8cm,height=6.5cm]{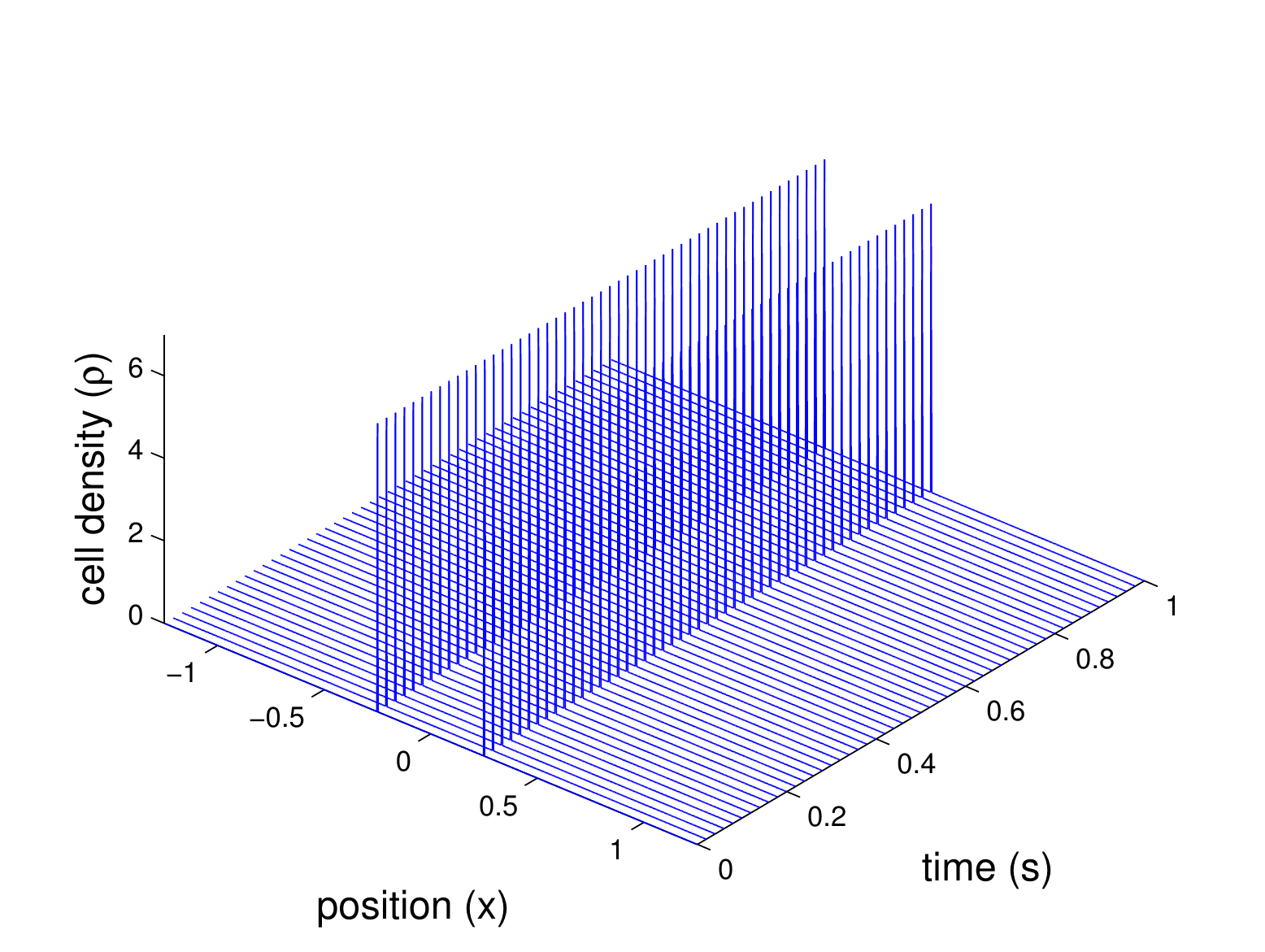}
\includegraphics[width=8cm,height=6.5cm]{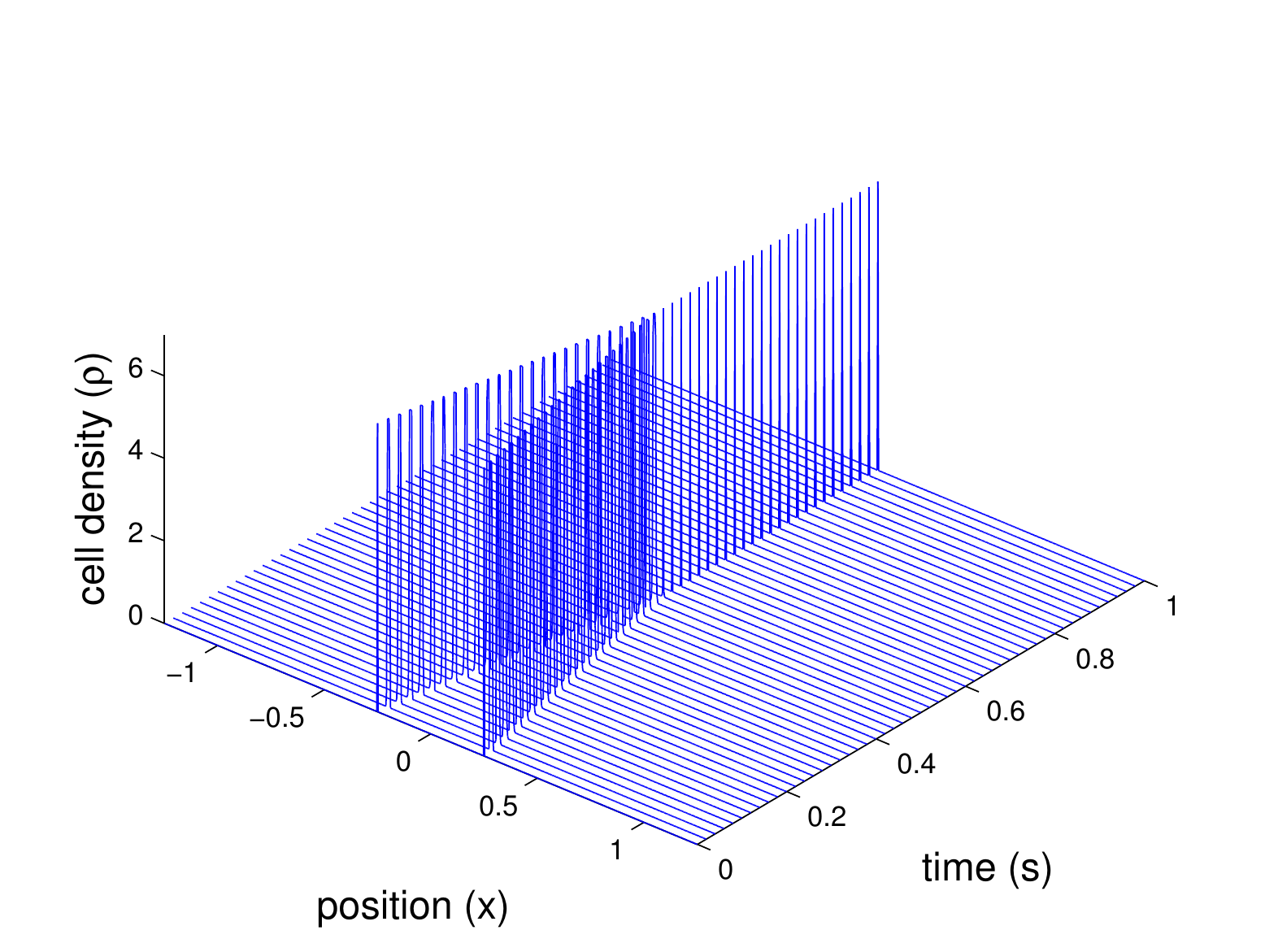}
\caption{Numerical result for the one dimensional aggregation equation with $W(x)=|x|$ 
and an initial datum given by two Dirac deltas. Left: Result obtained with the standard upwind scheme 
\eqref{badscheme} with a velocity computed at the interfaces of the mesh. 
Right: Result with the scheme \eqref{scheme1D}.
As already emphasized in Example 
\ref{ex1D}, this shows once again that a great care must be paid to the choice of the scheme in order to recover the correct dynamics of Dirac deltas.}
\label{fig:wrong}
\end{figure}

\begin{figure}[!ht]
\centering\includegraphics[width=8cm,height=6.5cm]{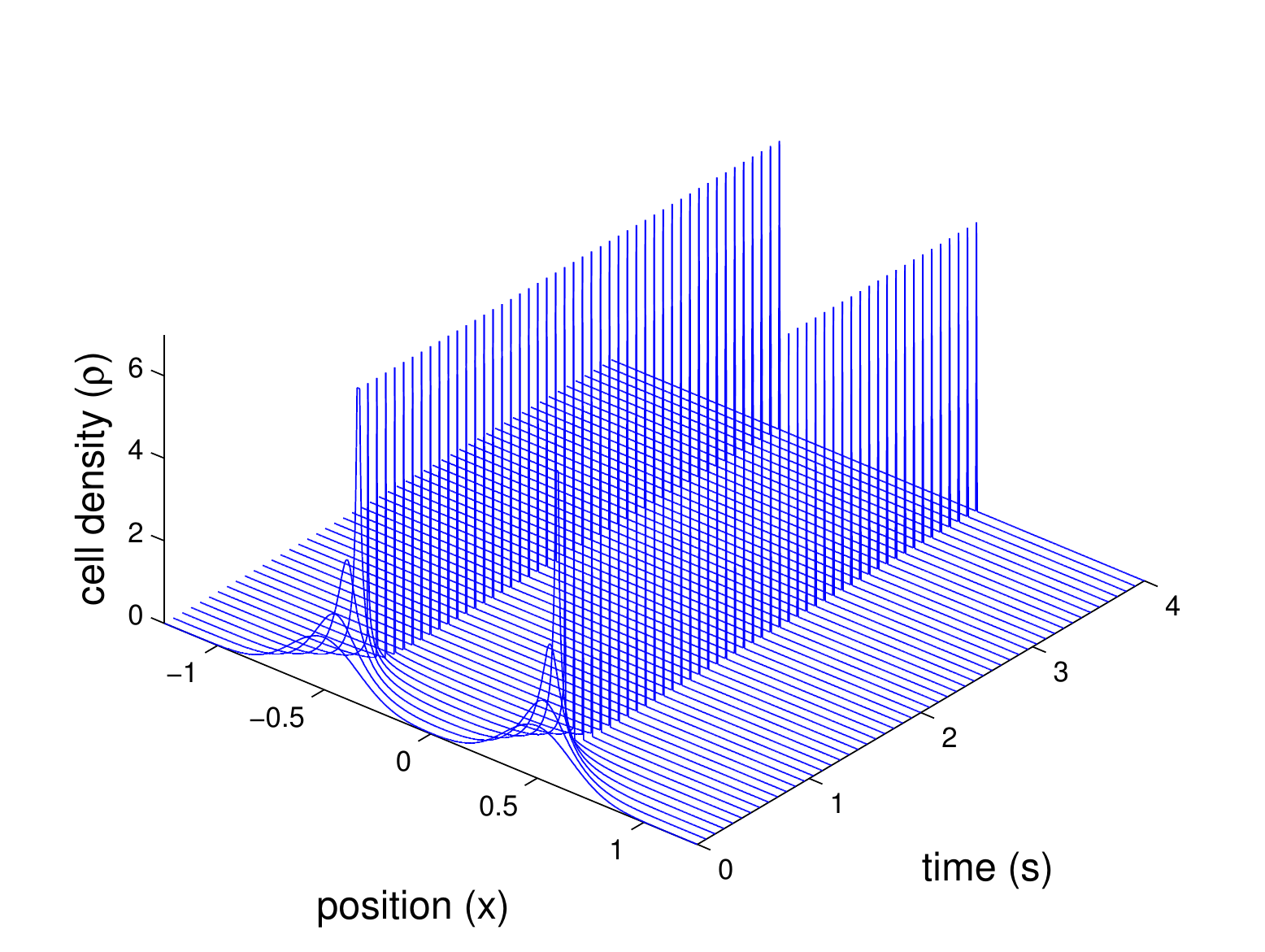}
\includegraphics[width=8cm,height=6.5cm]{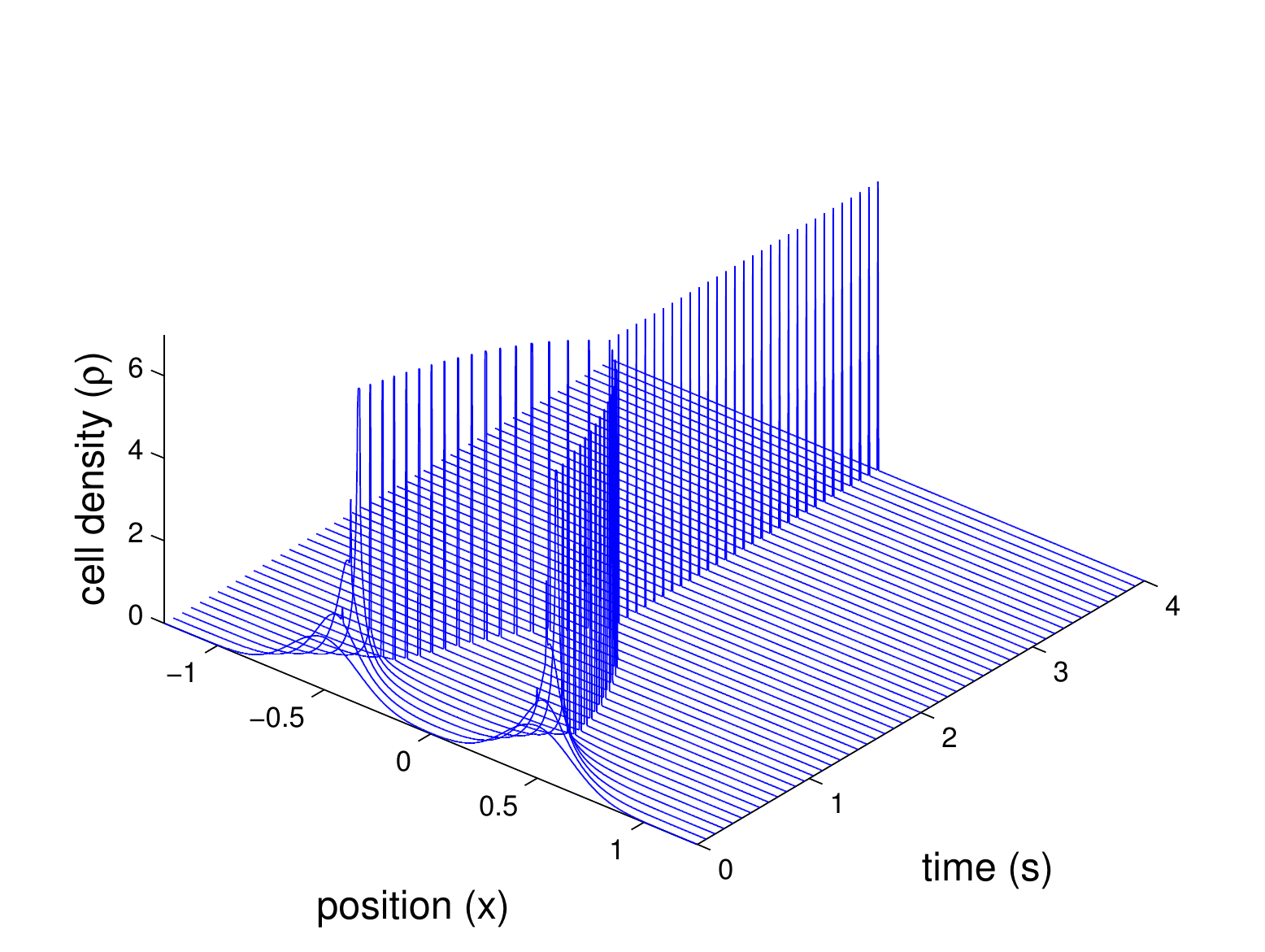}
\caption{Numerical result for the one dimensional aggregation equation with $W(x)=1-e^{-2|x|}$ 
and an initial datum given by the sum of two Gaussian functions. 
Left: Result obtained with the standard upwind scheme 
\eqref{badscheme} with a velocity computed at the interfaces of the mesh. 
Right: Result with the scheme \eqref{scheme1D}.
As in Fig. \ref{fig:wrong}, the upwind scheme \eqref{badscheme} does not capture
the right dynamics of the Dirac deltas after blow-up time.}
\label{fig:wrong_exp}
\end{figure}

{\bf Comparison with Burgers-Hopf equation.}
Considering the potential $W(x)=\frac 12 |x|$,
it has been proved in \cite{GF_dual} (see also \cite{bonaschi}) 
that the following equivalence
 holds true:
$\rho$ is the solution in Theorem \ref{Exist} 
if and only if $u=-W'*\rho$ is the entropy solution of the Burgers-Hopf equation $\pa_t u+\frac 12 \pa_x u^2 = 0$.

Let $(\rho^n_i)_{i \in \ZZ,n \in \NN}$ be given by the scheme \eqref{dis_num}--\eqref{def:aij}.
By conservation of the total mass, see Lemma \ref{bounddismom}, we have $\sum_{k\in \ZZ} \rho_k^n=1$.
Introducing 
\begin{equation*}
u_i^n := \frac 12 - \sum_{k\leq i} \rho_k^n, \quad i \in \ZZ, \ n \in \NN,
\end{equation*}  
we deduce, by summing \eqref{dis_num}
and by using the fact that 
$\rho_i^n=-(u_i^n-u_{i-1}^n)$, 
that the family $(u_i^n)_{i \in \ZZ,n \in \NN}$ satisfies:
\begin{equation}
\label{eq:scheme:burgers}
u_i^{n+1} = u_i^n -\frac{\Delta t}{\Delta x} \bigl((a_i^n)^+ (u_i^n - u_{i-1}^n)
-(a_{i+1}^n)^- (u_{i+1}^n-u_i^n) \bigr),
\end{equation}
where, with \eqref{def:aij}, we have 
$$
a_i^n = -\frac 12 \sum_{k\neq i} \rho_k^n \mbox{sign }(x_i-x_k).
$$
Then
\begin{equation*}
a_i^n = -\frac 12 \biggl(\sum_{k<i} \rho_k^n - \sum_{k>i} \rho_k^n \biggr)
= -\frac 12 \biggl(\sum_{k<i} \rho_k^n - 1 + \sum_{k\leq i} \rho_k^n \biggr)
= \frac 12 (u_{i-1}^n + u_i^n).
\end{equation*}
Moreover, as $\rho_i^n$ remains nonnegative under the CFL condition (see Lemma \ref{lem:CFL}), $u_i^n - u_{i-1}^n = -\rho_i^n \leq 0$, so that 
$$
(a_i^n)^+ (u_i^n-u_{i-1}^n) = -\left(a_i^n (u_i^n-u_{i-1}^n)\right)^- = -\frac 12\left( (u_i^n)^2-(u_{i-1}^n)^2\right)^-.
$$
Similarly, we get 
$$
(a_{i+1}^n)^- (u_{i+1}^n-u_{i}^n) = -\left(a_{i+1}^n (u_{i+1}^n-u_{i}^n)\right)^+ = -\frac 12\left( (u_{i+1}^n)^2-(u_{i}^n)^2\right)^+, 
$$
so that the scheme
\eqref{eq:scheme:burgers}
for $u$ finally rewrites
\begin{equation}\label{schemeBurgers}
u_i^{n+1} = u_i^n -\frac{\Delta t}{2\Delta x} \Big( 
((u_{i+1}^n)^2-(u_i^n)^2)^-
-((u_i^n)^2 - (u_{i-1}^n)^2)^+
\Big).
\end{equation}

Then we may apply the main result of this paper and deduce the convergence at order $1/2$
of the above scheme:
\begin{lemma}
Let $u^{ini}$ be given in $BV(\RR)$ such that $\pa_xu^{ini} \leq 0$ and $TV(u^{ini})=1$.
Define the family $(u^n_{i})_{i \in \ZZ,n\in\NN}$ 
by means of \eqref{schemeBurgers}, 
with the initial data $u_i^{0} := \frac 12 + \pa_x u^{ini}(-\infty,x_{i+\frac 12})$, and 
let $u_{\Delta x}^n:=\sum_{i\in\ZZ} u_i^n \mathbf{1}_{[x_i,x_{i+1})}$. 
Let $u$ be the entropy solution to the Burgers equation $\pa_t u + \frac 12 \pa_x u^2=0$
with $u^{ini}$ as initial condition.
Then, there exists $C\geq 0$, independent of the discretization parameters, such that if the CFL condition 
$\Delta t <  \Delta x$ is satisfied, one has 
$$
\|u(t^n)-u^n_{\Delta x}\|_{L^1} \leq C \bigl( 
\sqrt{t^n \Delta x} + \Delta x \bigr).
$$
\end{lemma}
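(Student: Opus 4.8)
The plan is to read this statement as a corollary of Theorem~\ref{TH}(i) applied to the particular potential $W(x)=\tfrac12|x|$, through the dictionary between the aggregation equation and the Burgers--Hopf equation recalled just above. I would set $\rho^{ini}:=-\partial_x u^{ini}$; since $\partial_x u^{ini}\le 0$ is a measure of total mass $TV(u^{ini})=1$, this $\rho^{ini}$ is a probability measure on $\RR$, and, as is implicit in the initialization of the scheme, I may assume the normalization $u^{ini}(-\infty)=\tfrac12$, hence $u^{ini}(+\infty)=-\tfrac12$. I would then check the correspondence at three levels. \emph{Initial data:} with $\rho_k^0:=\rho^{ini}(C_k)$ as in \eqref{disrho0}, one has $\sum_{k\le i}\rho_k^0=\rho^{ini}\big((-\infty,x_{i+1/2})\big)=-\partial_x u^{ini}\big((-\infty,x_{i+1/2})\big)$, so that $\tfrac12-\sum_{k\le i}\rho_k^0$ is exactly the prescribed $u_i^0$. \emph{The scheme:} the computation carried out just before the statement shows that, if $(\rho_i^n)$ is produced by the aggregation scheme \eqref{dis_num}--\eqref{def:aij} with $W(x)=\tfrac12|x|$, then $u_i^n:=\tfrac12-\sum_{k\le i}\rho_k^n$ satisfies precisely \eqref{schemeBurgers}; hence the Burgers iterates of the statement coincide with the images of the aggregation iterates under this transform. \emph{Exact solutions:} by the equivalence of \cite{GF_dual} recalled above, the entropy solution is $u(t,\cdot)=-W'*\rho(t,\cdot)$, where $\rho$ is the Filippov-flow solution of Theorem~\ref{Exist} with datum $\rho^{ini}$; since $W'(x)=\tfrac12\,\mathrm{sign}(x)$, a direct computation gives $u(t,x)=\tfrac12-F_{\rho(t)}(x)$ for a.e.\ $x$, where $F_{\rho(t)}$ denotes the cumulative distribution function of $\rho(t)$.

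Once this dictionary is in place, I would apply Theorem~\ref{TH}(i): the potential $W(x)=\tfrac12|x|$ satisfies \textbf{(A0)}--\textbf{(A3)} with $\lambda=0$ and $w_\infty=\tfrac12$, and the strict $\tfrac12$-CFL condition \eqref{CFL} then reads exactly $\Delta t<\Delta x$, which is the assumed CFL. Theorem~\ref{TH}(i) thus gives $d_W(\rho(t^n),\rho^n_{\Delta x})\le C\big(\sqrt{t^n\Delta x}+\Delta x\big)$ for an absolute constant $C$ (the exponential prefactor of \eqref{eq:TH:bound:1} disappearing since $\lambda=0$). It remains to transfer this to the $L^1$ norm of the reconstructions $u^n_{\Delta x}$. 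From \eqref{Fdelta} one reads off that $u^n_{\Delta x}(x)=\tfrac12-F_{\rho^n_{\Delta x}}(x)$ for a.e.\ $x$, whence
\begin{equation*}
\|u(t^n)-u^n_{\Delta x}\|_{L^1(\RR)}=\int_\RR\big|F_{\rho(t^n)}(x)-F_{\rho^n_{\Delta x}}(x)\big|\,dx=d_{W_1}\big(\rho(t^n),\rho^n_{\Delta x}\big)\le d_W\big(\rho(t^n),\rho^n_{\Delta x}\big),
\end{equation*}
by the classical one-dimensional identity $d_{W_1}(\mu,\nu)=\int_\RR|F_\mu-F_\nu|$ (the $1$-Wasserstein analogue of \eqref{dWF-1}) and Cauchy--Schwarz, $d_{W_1}\le d_W$. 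This yields the claimed bound.

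The one point that genuinely requires care is that Theorem~\ref{TH} is stated for $\rho^{ini}\in\calP_2(\RR)$, whereas a general $u^{ini}\in BV(\RR)$ only forces $\rho^{ini}\in\calP(\RR)$ with possibly infinite moments, so that a priori $d_W(\rho(t^n),\rho^n_{\Delta x})$ could be infinite and the quotation of Theorem~\ref{TH} vacuous. I would remove this restriction by approximation: truncate $\rho^{ini}$ to $[-R,R]$ and renormalize, getting $\rho^{ini}_{[R]}\in\calP_2(\RR)$, apply the argument above to $\rho^{ini}_{[R]}$, and note that the resulting bound is uniform in $R$ because the constant of Theorem~\ref{TH} does not depend on the second moment. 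Letting $R\to\infty$, the fixed-mesh initial weights converge, $\rho^0_{[R],i}\to\rho^0_i$ for every $i$; by continuous dependence of the scheme on its initial weights, $u^n_{[R],\Delta x}\to u^n_{\Delta x}$ pointwise for each fixed $n$, while $u_{[R]}(t^n)\to u(t^n)$ in $L^1_{loc}$ by stability of the entropy (equivalently duality) solution under $\rho^{ini}_{[R]}\rightharpoonup\rho^{ini}$; Fatou's lemma then transfers the estimate to the limit. I expect this passage to the limit to be the main obstacle --- indeed essentially the only one --- since the discrete equivalence is already established just above the statement and the continuous one is borrowed from \cite{GF_dual}.
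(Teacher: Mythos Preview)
Your proof is correct and follows essentially the same approach as the paper: both reduce to Theorem~\ref{TH}(i) via the aggregation/Burgers dictionary with $W(x)=\tfrac12|x|$ and $\lambda=0$, and both pass from $d_W$ to $\|\cdot\|_{L^1}$ through the one-dimensional identity $\int_\RR|F_\mu-F_\nu|=\int_0^1|Q_\mu-Q_\nu|=d_{W_1}(\mu,\nu)\le d_W(\mu,\nu)$ (the paper phrases this on the quantile side and cites \cite{bobkov:ledoux}, while you work on the CDF side and name it $d_{W_1}$). Your explicit treatment of the $\calP_2(\RR)$ hypothesis by truncation is a point the paper does not address at all.
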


\begin{remark}
We do not claim that the scheme converges for any initial datum of the Cauchy problem for the Burgers equation (and actually it does not). The convergence result above only applies to a non-increasing initial condition belonging to $[-1/2, 1/2]$. \\
Note that this scheme is not conservative, but, surprisingly (see \cite{Legloc}) this does not prevent it from converging toward the right solution. 
\end{remark}

\begin{proof}
First remark that the CFL condition that is here required is $w_\infty \Delta t < \frac 12 \Delta x$, with $w_\infty = 1/2$ as $W(x) = \frac{1}{2} |x|$. \\ 
The entropy solution $u$ of the Burgers equation with a nonincreasing $BV$ initial datum
is a nonincreasing $BV$ function. 
By Cauchy-Schwarz inequality, we have
$$
\int_{0}^{1} |Q_{\rho(t^n)}(z) - Q_{\rho^n_{\Delta x}}(z)|\,dz \leq 
\| Q_{\rho(t^n)} - Q_{\rho^n_{\Delta x}}\|_{L^2(0,1)} = d_W\bigl(\rho(t^n),\rho_{\Delta x}^n\bigr),
$$
where $(\rho(t))_{t \ge 0}$ is the solution of 
\eqref{EqInter}, with $W(x) = \frac{1}{2}|x|$ as before and $\rho^{ini}=-\partial_{x} u^{ini}$ as initial condition, and $(\rho^n_{\Delta x})_{n \geq 0}$ is the numerical solution obtained by Scheme \eqref{dis_num} with $d = 1$ together with initial condition \eqref{disrho0} (numerical solution whose convergence at order $1/2$ is stated in Theorem \ref{TH}).

Observing that $W$ is convex, we apply Theorem \ref{TH} with $\lambda =0$. We obtain
$$
\int_{0}^{1} |Q_{\rho(t^n)}(z) - Q_{\rho^n_{\Delta x}}(z)|\,dz \leq 
d_W(\rho(t^n),\rho_{\Delta x}^n) \leq
C \bigl( \sqrt{t^n \Delta x} + \Delta x \bigr).
$$
The claim follows provided we prove that 
\begin{equation}\label{eq:integral}
\int_{\RR} |u(t^n,x)-u^n_{\Delta x}(x)|\,dx = \int_{0}^{1} 
|Q_{\rho(t^n)}(z) - Q_{\rho^n_{\Delta x}}(z)|\,dz.
\end{equation}

In order to prove \eqref{eq:integral}, we notice that, from a geometrical point of view,
the left hand side of equality \eqref{eq:integral} corresponds to the area between 
the curves $x\mapsto u(t^n,x)$ and $x\mapsto u^n_{\Delta x}(x)$. 
Also, the right hand side is a measure of the area between their generalized inverses.
However, the graph of the pseudo-inverse of a function may be obtained by flipping 
the graph of the function 
with respect to the diagonal. 
Since this operation conserves the area, we deduce that both areas are equal, that is
\eqref{eq:integral} holds.

Another way to prove the identity \eqref{eq:integral} is to observe that the solution 
$u$ of the Burgers-Hopf equation reads:
\begin{equation*}
u(t,x) = \frac12\bigl[ \rho\bigl(t,(x,+\infty)\bigr) - 
\rho\bigl(t,(-\infty,x)\bigr) \bigr], \quad t \geq 0, \ x \in {\mathbb R},
\end{equation*}
where $\rho$ is the solution in Theorem \ref{Exist}. In fact, as the number of points $x$ for which $\rho(t,\{x\})>0$ is at most countable for any given $t >0$, we have the almost everywhere equality:
\begin{equation*}
u(t,x) = \rho\bigl(t,(x,+\infty)\bigr) - \frac12.
\end{equation*}
Similarly, 
\begin{equation*}
\begin{split}
u^n_{\Delta x}(t,x) &= \sum_{i \in {\mathbb Z}}
u ^n_{i} {\mathbf 1}_{[x_{i},x_{i+1})}(x)
= \frac12 - 
\sum_{i \in {\mathbb Z}} {\mathbf 1}_{[x_{i},x_{i+1})}(x)
\sum_{k \leq i} \rho^n_{k}
\\
&= \frac12 - \sum_{i \in {\mathbb Z}} 
{\mathbf 1}_{[x_{i},x_{i+1})}(x) \rho^n_{\Delta x}(t,(-\infty,x_{i}])
= \frac12 - \rho^n_{\Delta x}\bigl(t,(-\infty,x]\bigr) = \rho^n_{\Delta x}\bigl(t,(x,+\infty)\bigr) - 
\frac12.
\end{split}
\end{equation*} 
 
So, to complete the proof, it suffices to use the fact that, for any two
probability measures $\mu$ and $\mu'$ on $\RR$, 
\begin{equation*}
\begin{split}
\int_{\RR} \bigl\vert \mu\bigl((x,+\infty)\bigr)-
\mu'\bigl((x,+\infty)\bigr) \bigr\vert dx &= \int_{0}^1 
\vert Q_{\mu}(z) - Q_{\mu'}(z) \vert 
 dz,
\end{split}
\end{equation*}  
see 
\cite[Theorems 2.9 and 2.10]{bobkov:ledoux}, noticing that the function 
$Q_{\mu}$ we use here is the right continuous version of the quantile function used in 
\cite{bobkov:ledoux}.

\end{proof}

\subsection{Numerical simulation in two dimensions}
\label{subsec:numerical:d2}

\begin{figure}[ht!]
\includegraphics[width=.51\textwidth]{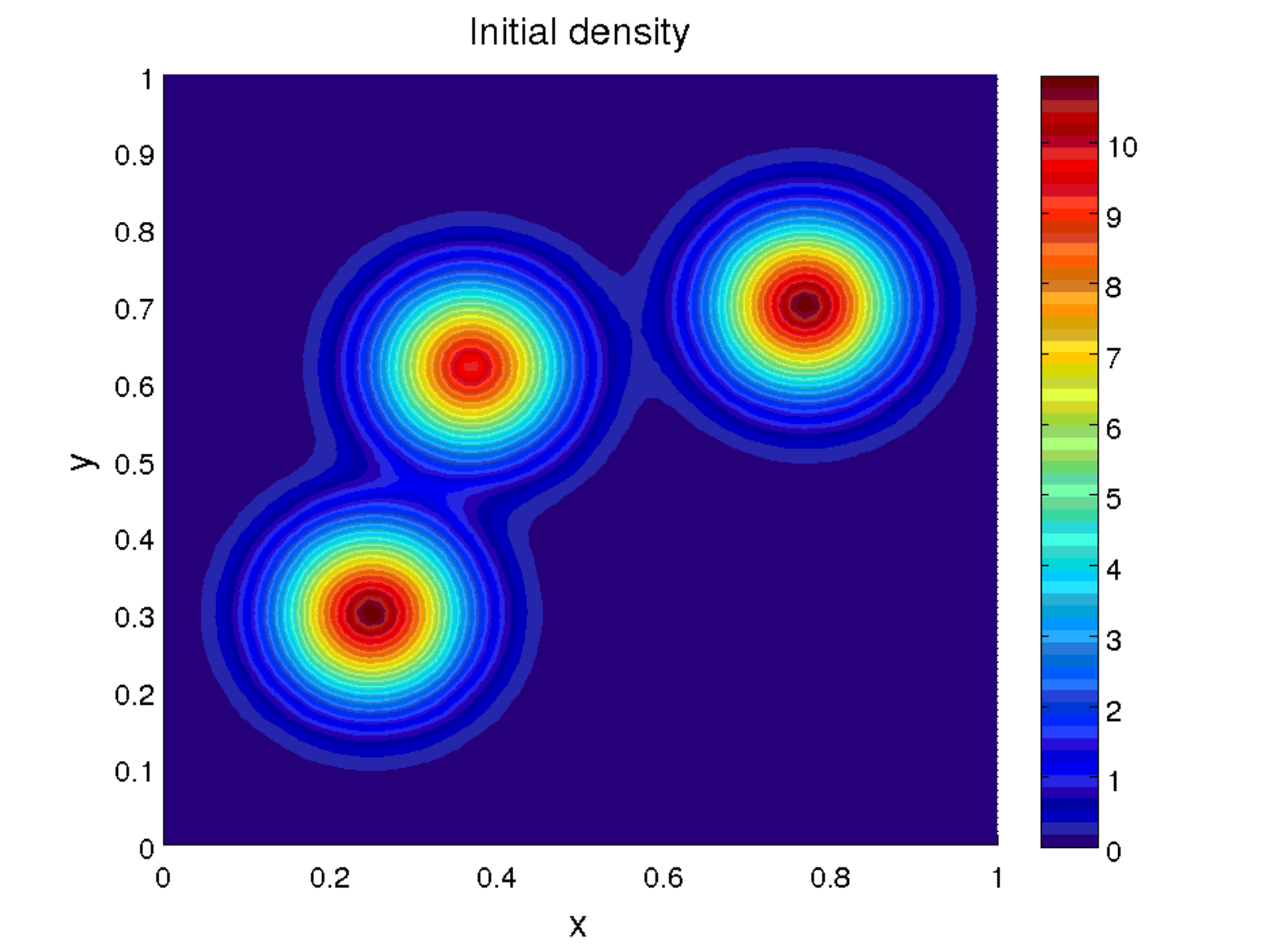}
\includegraphics[width=.51\textwidth]{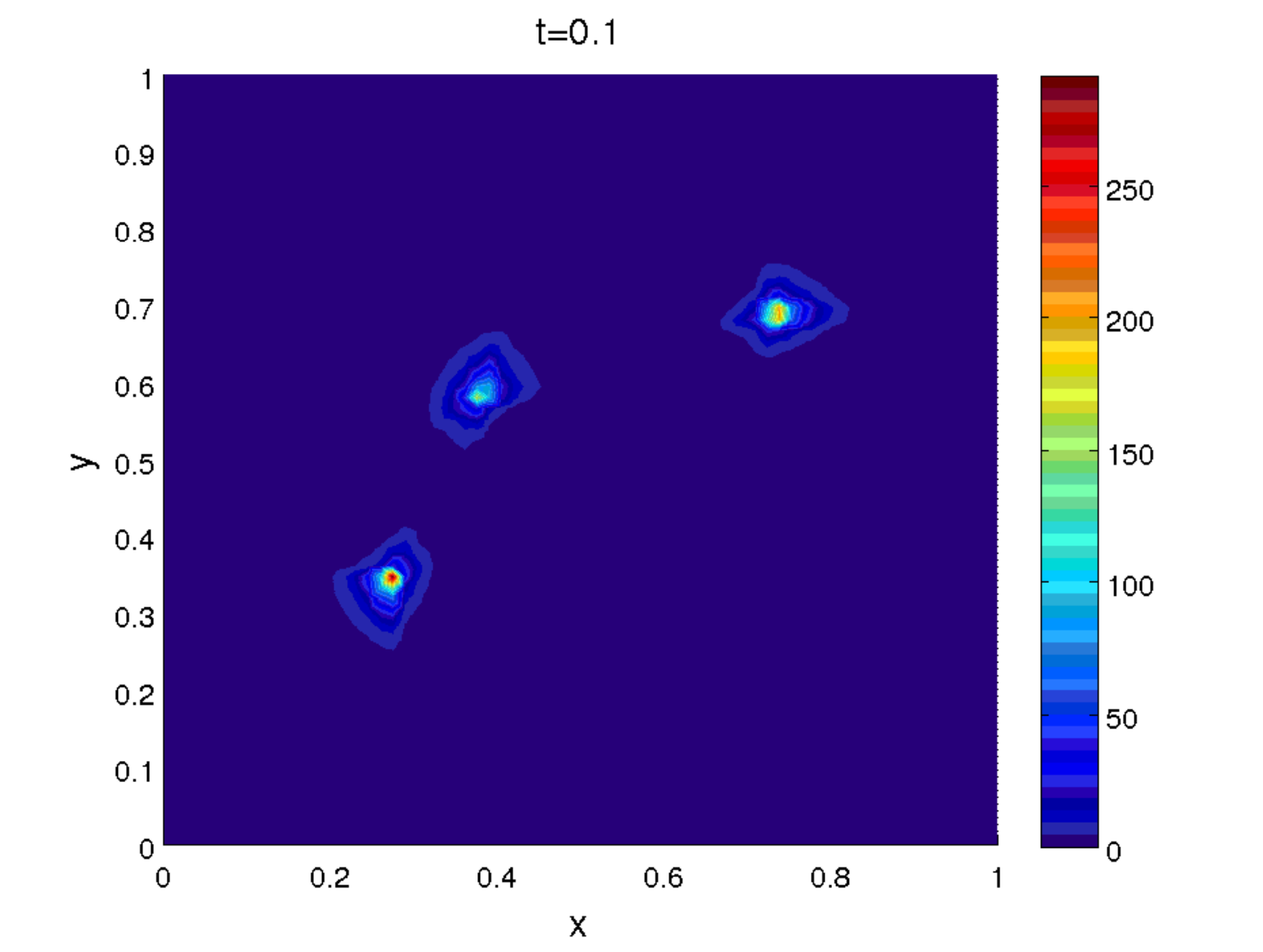}
\includegraphics[width=.51\textwidth]{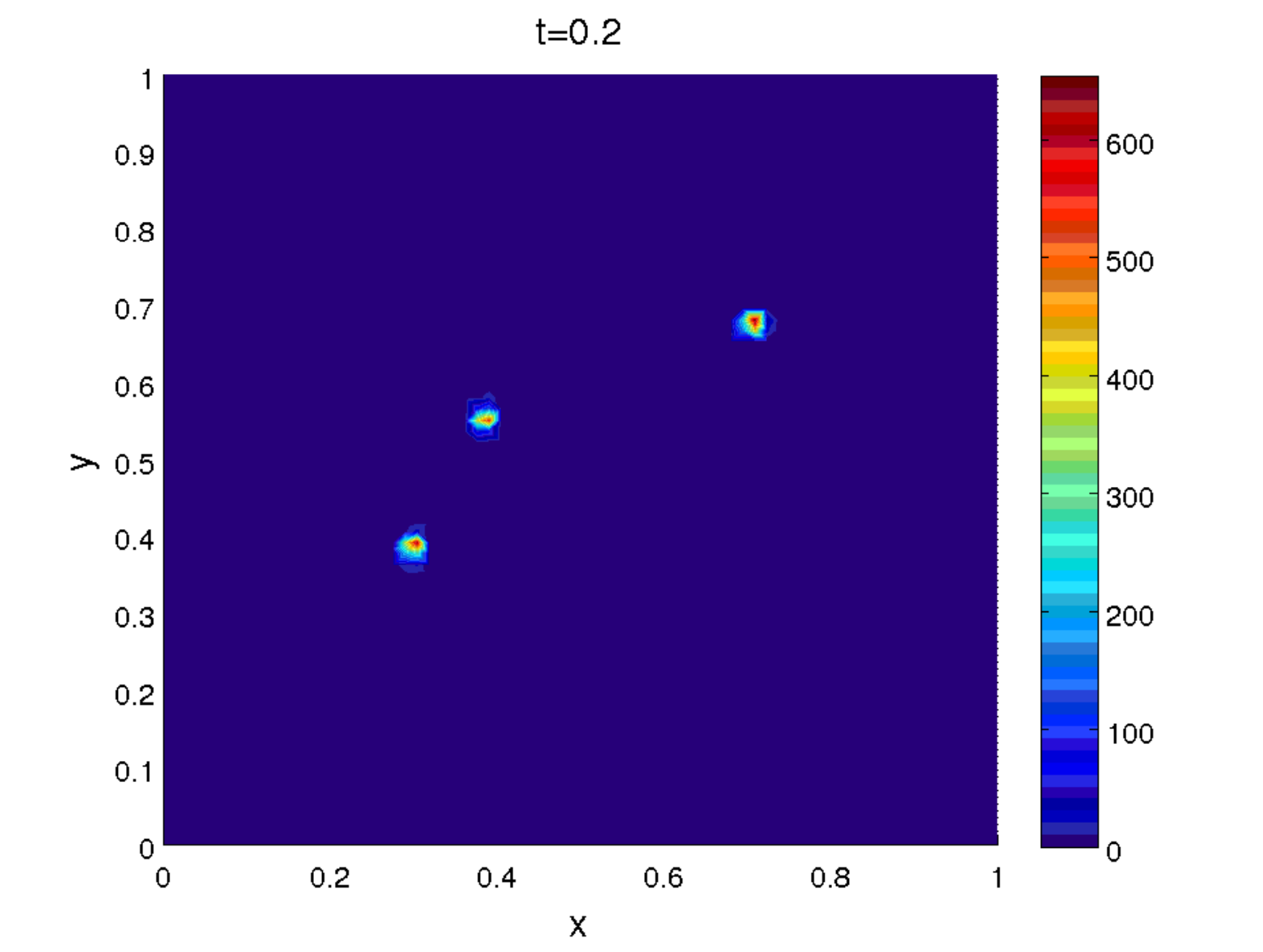}
\includegraphics[width=.51\textwidth]{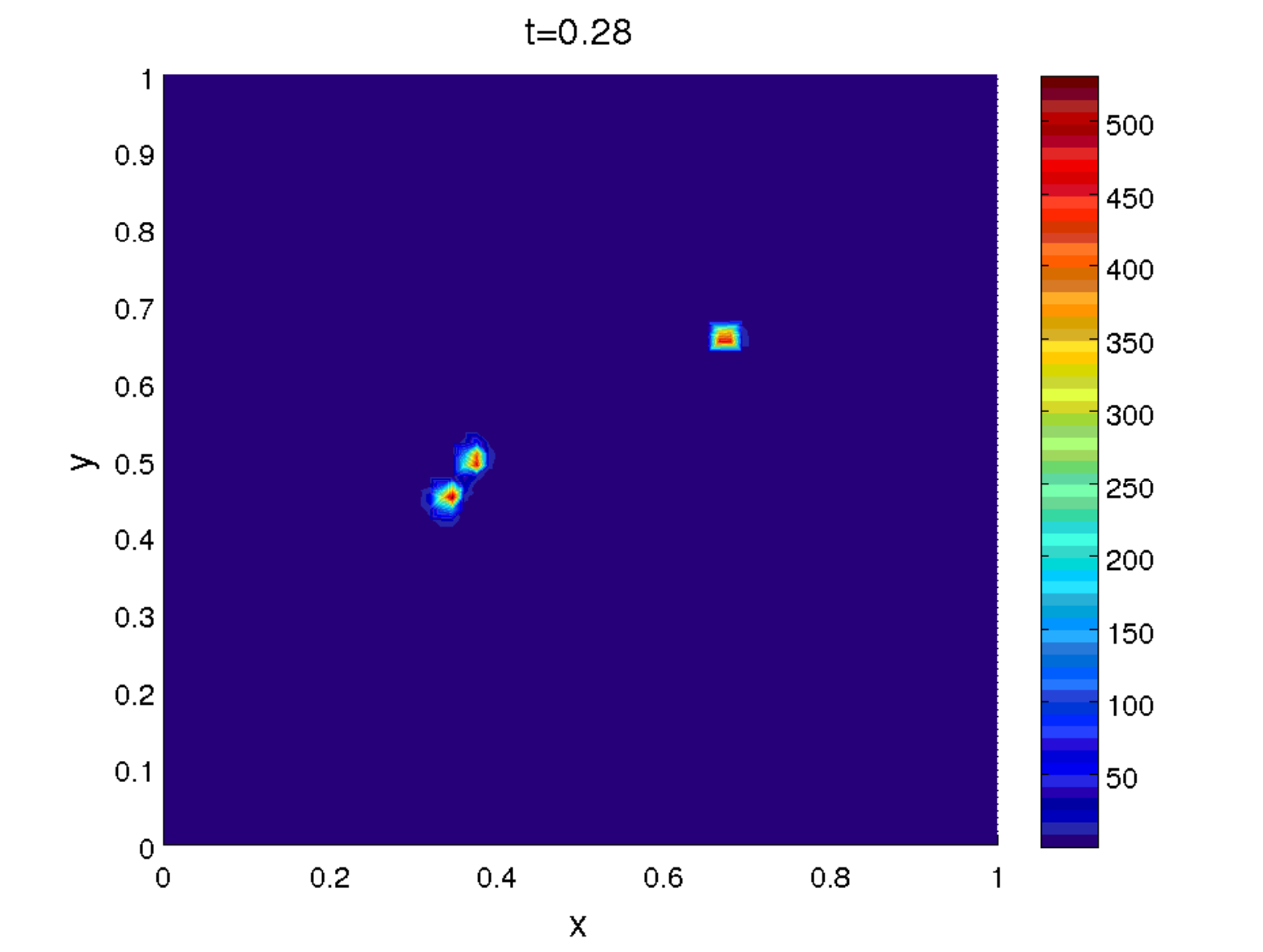}
\includegraphics[width=.51\textwidth]{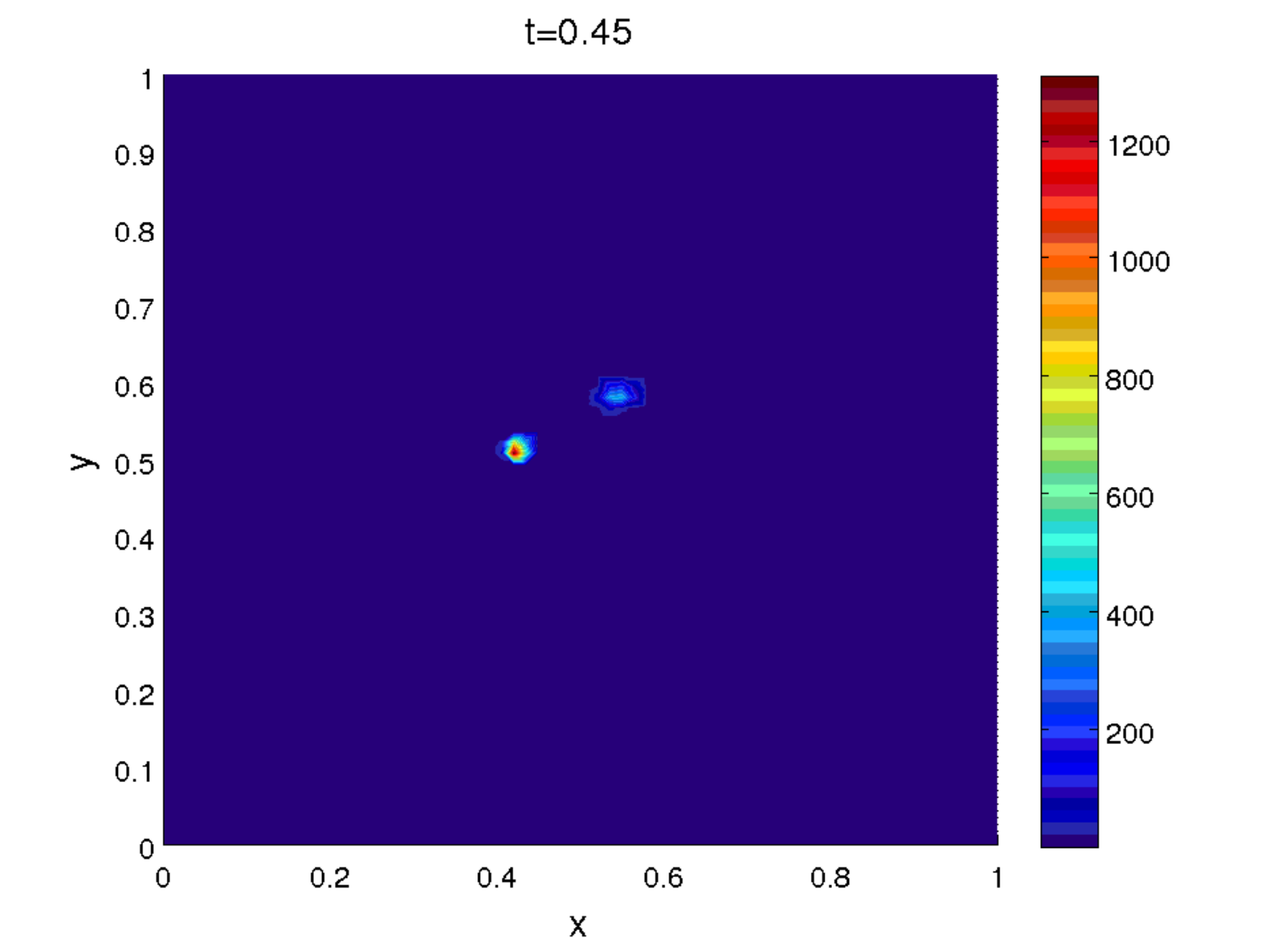}
\includegraphics[width=.51\textwidth]{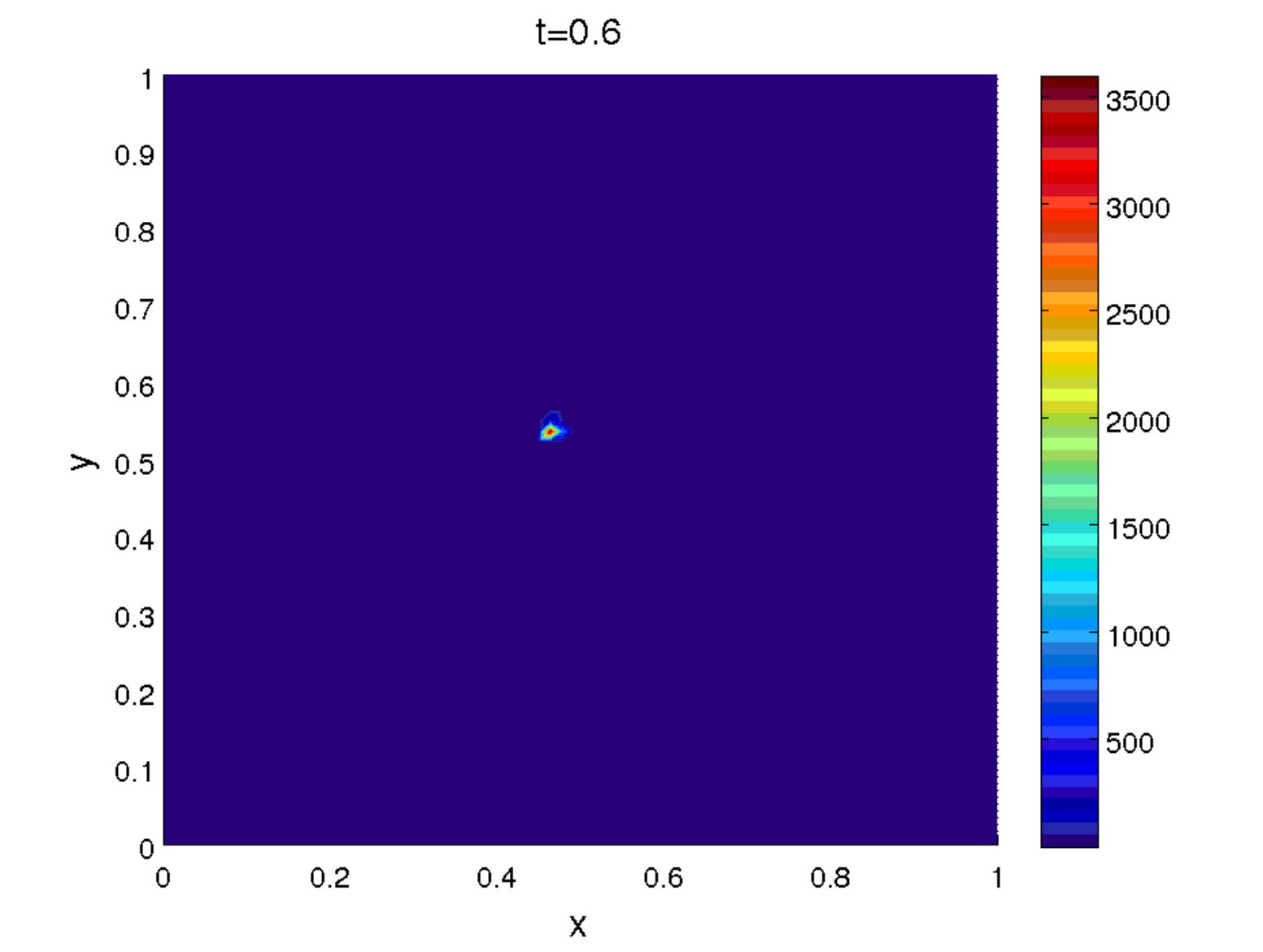}
\caption{Time dynamics of the numerical solution of the aggregation equation 
\eqref{EqInter} with {$W(x)=W_1(x) = 1-e^{-5|x|}$} and an initial datum given by the sum of 
three bumps.
Time increases from top left to bottom right.}
\label{bump2D}       
\end{figure}

\begin{figure}[ht!]
\includegraphics[width=.51\textwidth]{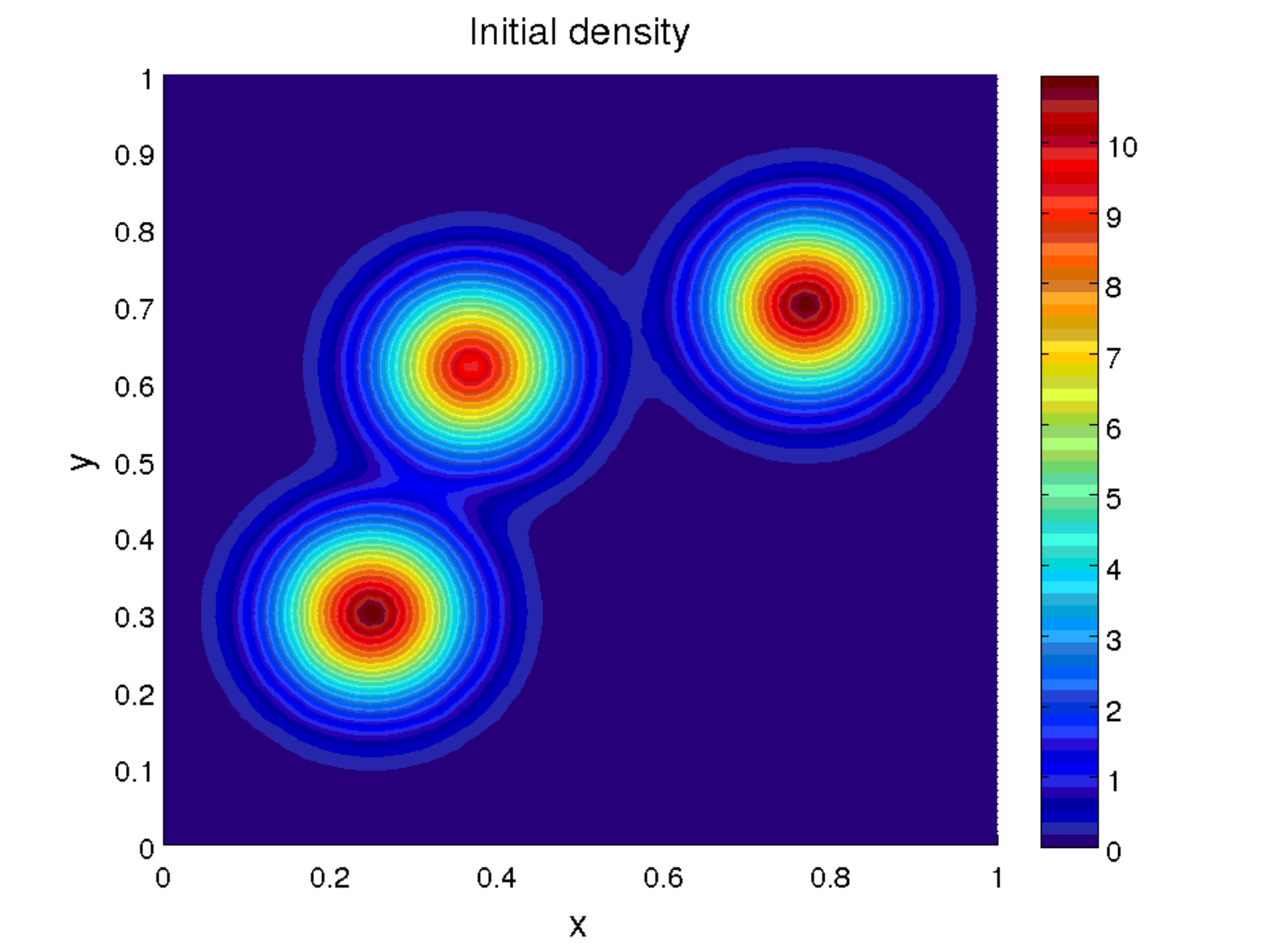}
\includegraphics[width=.51\textwidth]{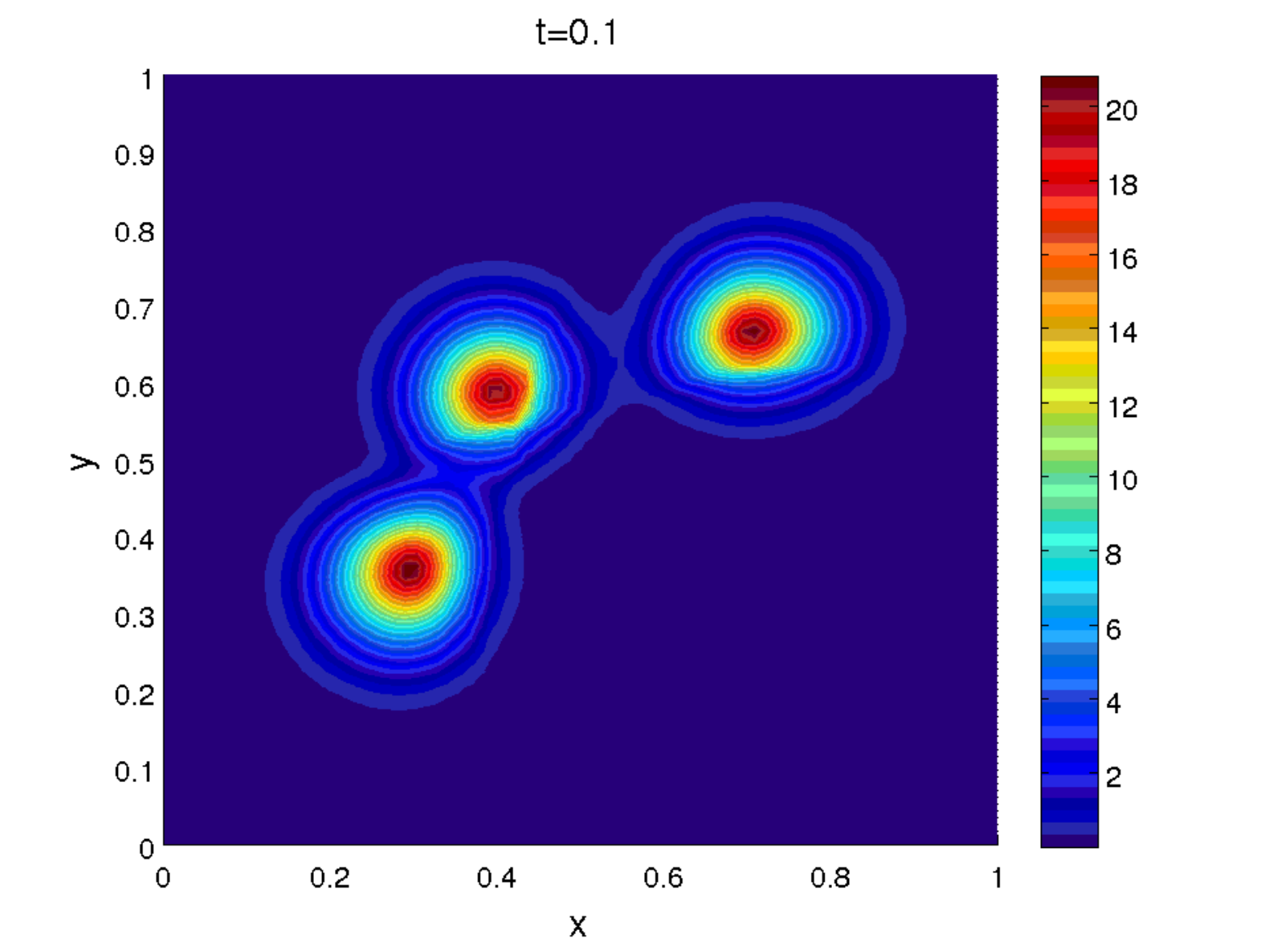}
\includegraphics[width=.51\textwidth]{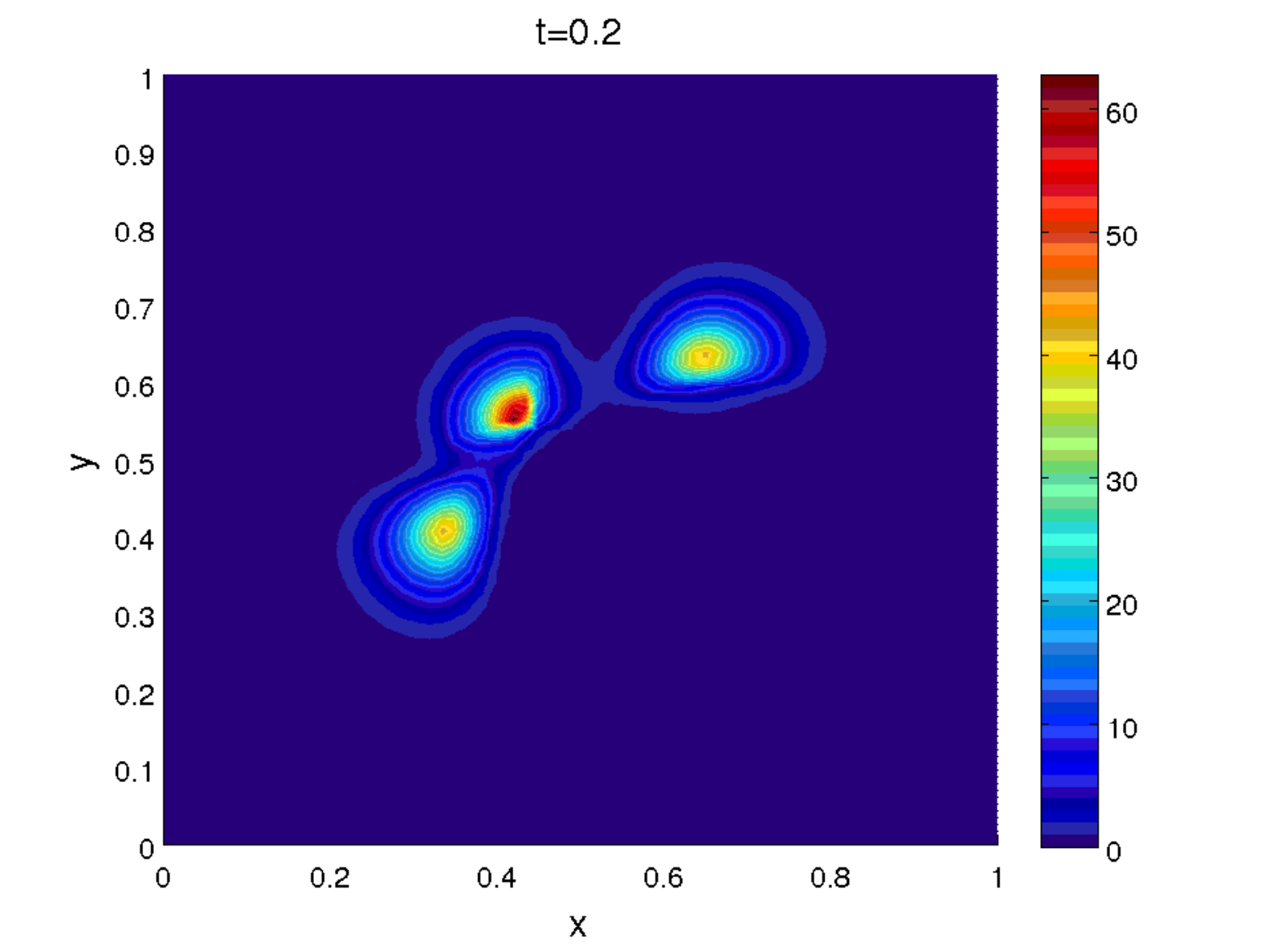}
\includegraphics[width=.51\textwidth]{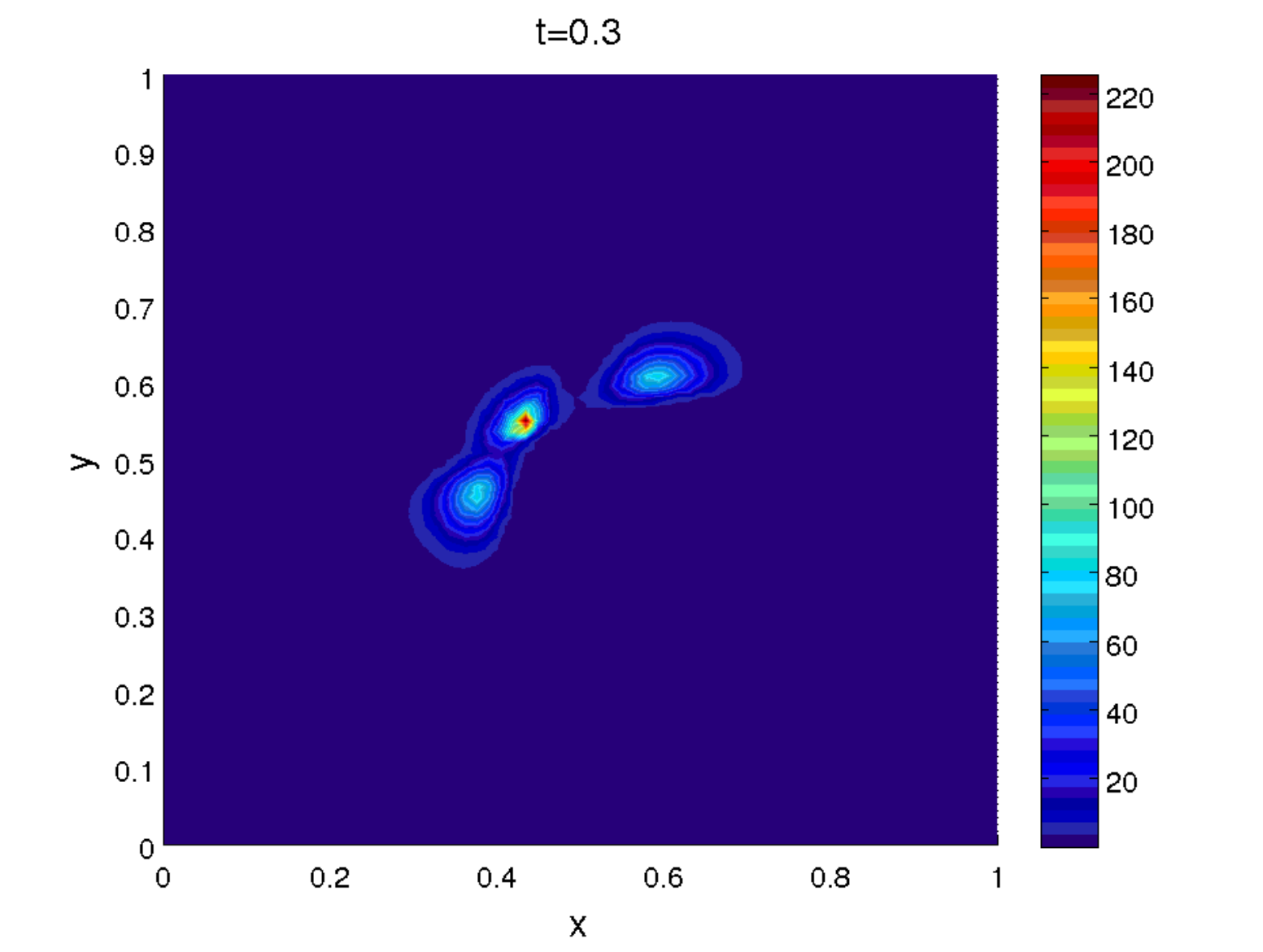}
\includegraphics[width=.51\textwidth]{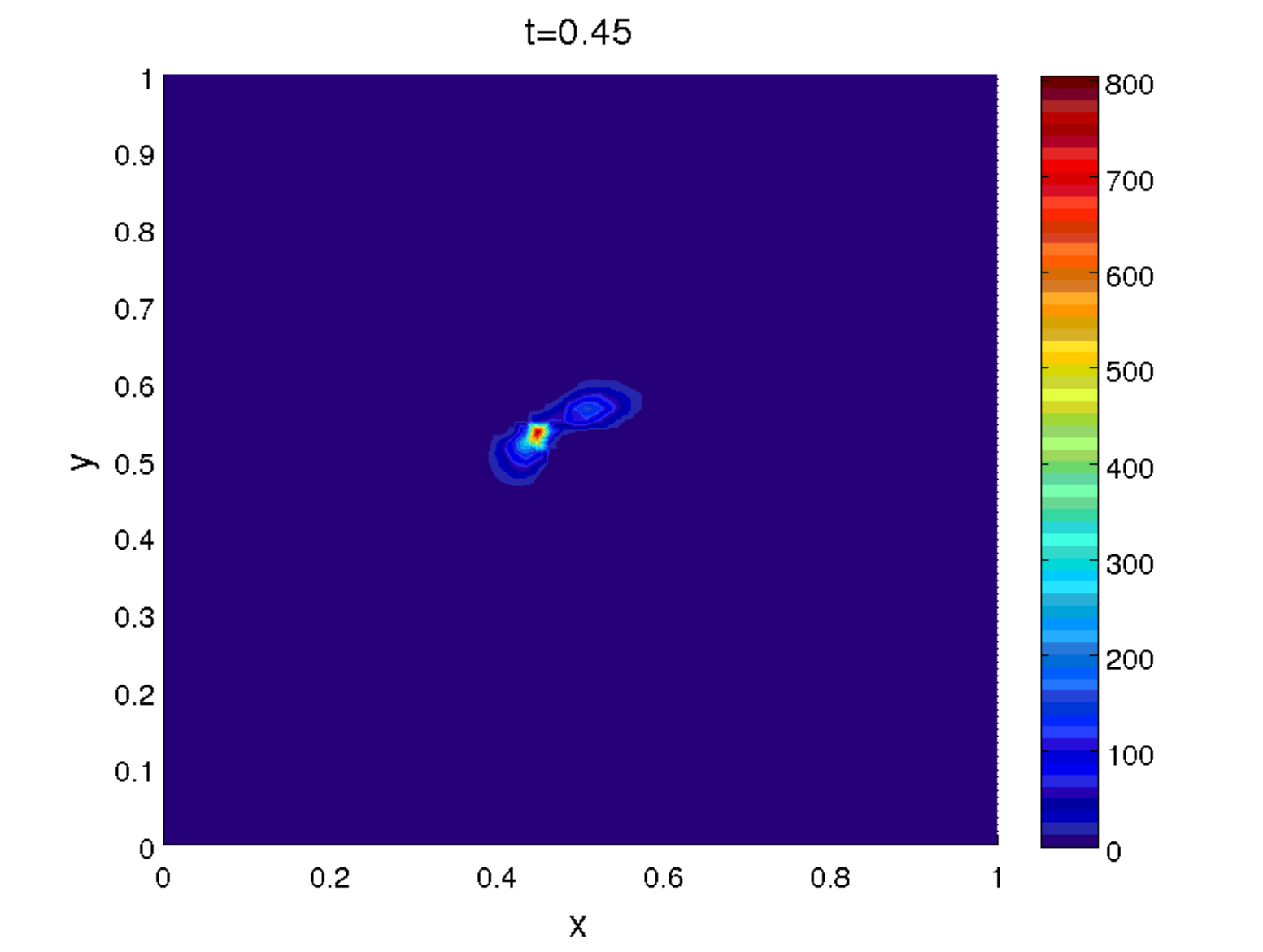}
\includegraphics[width=.51\textwidth]{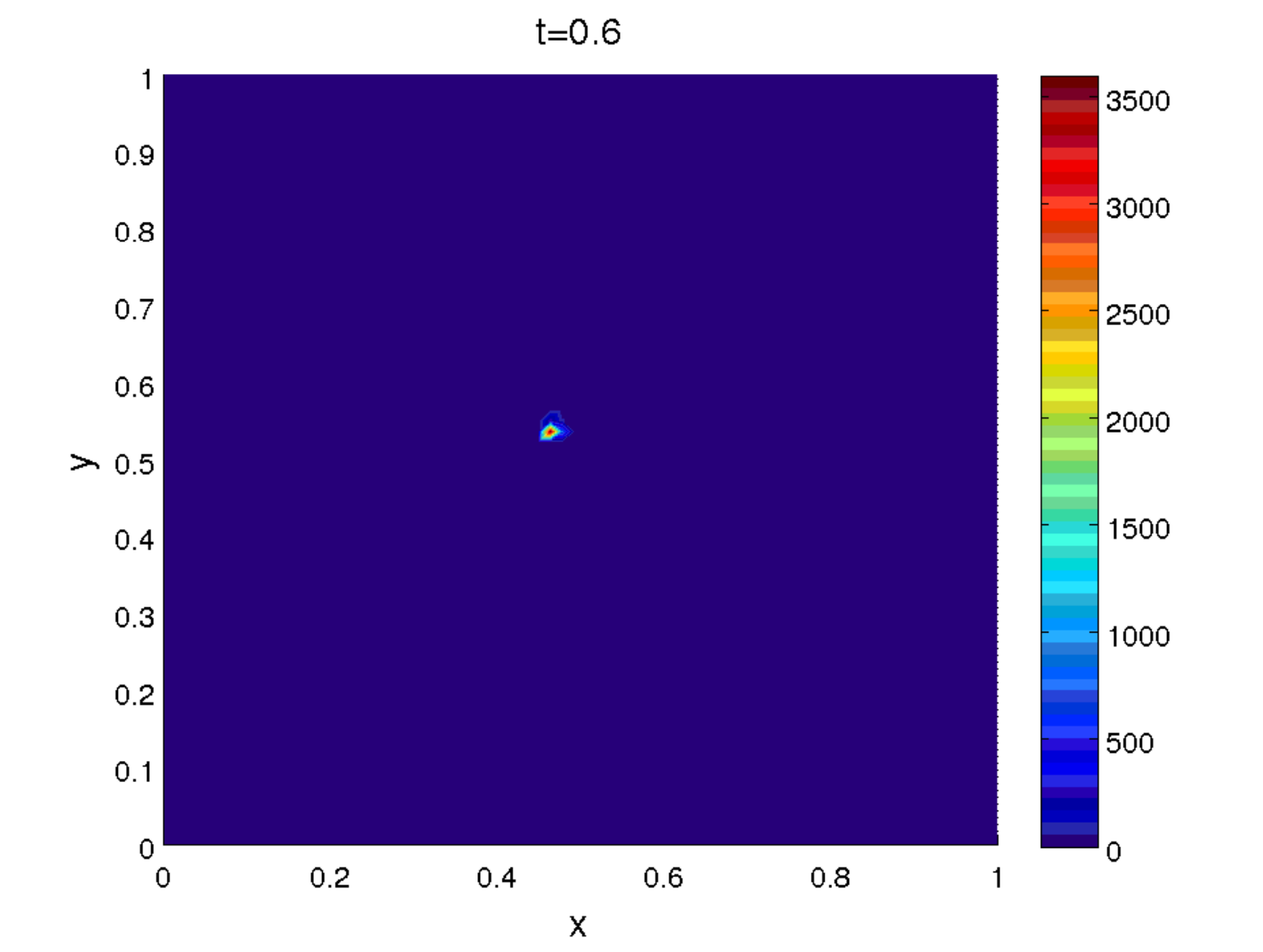}
\caption{Time dynamics of the numerical solution of the aggregation equation 
\eqref{EqInter} with $W(x)=W_2(x) = 5|x|$ and an initial datum given by the sum of three
bumps.
Time increases from top left to bottom right.}
\label{bump2Dbis}       
\end{figure}

\begin{figure}[ht!]
\includegraphics[width=.51\textwidth]{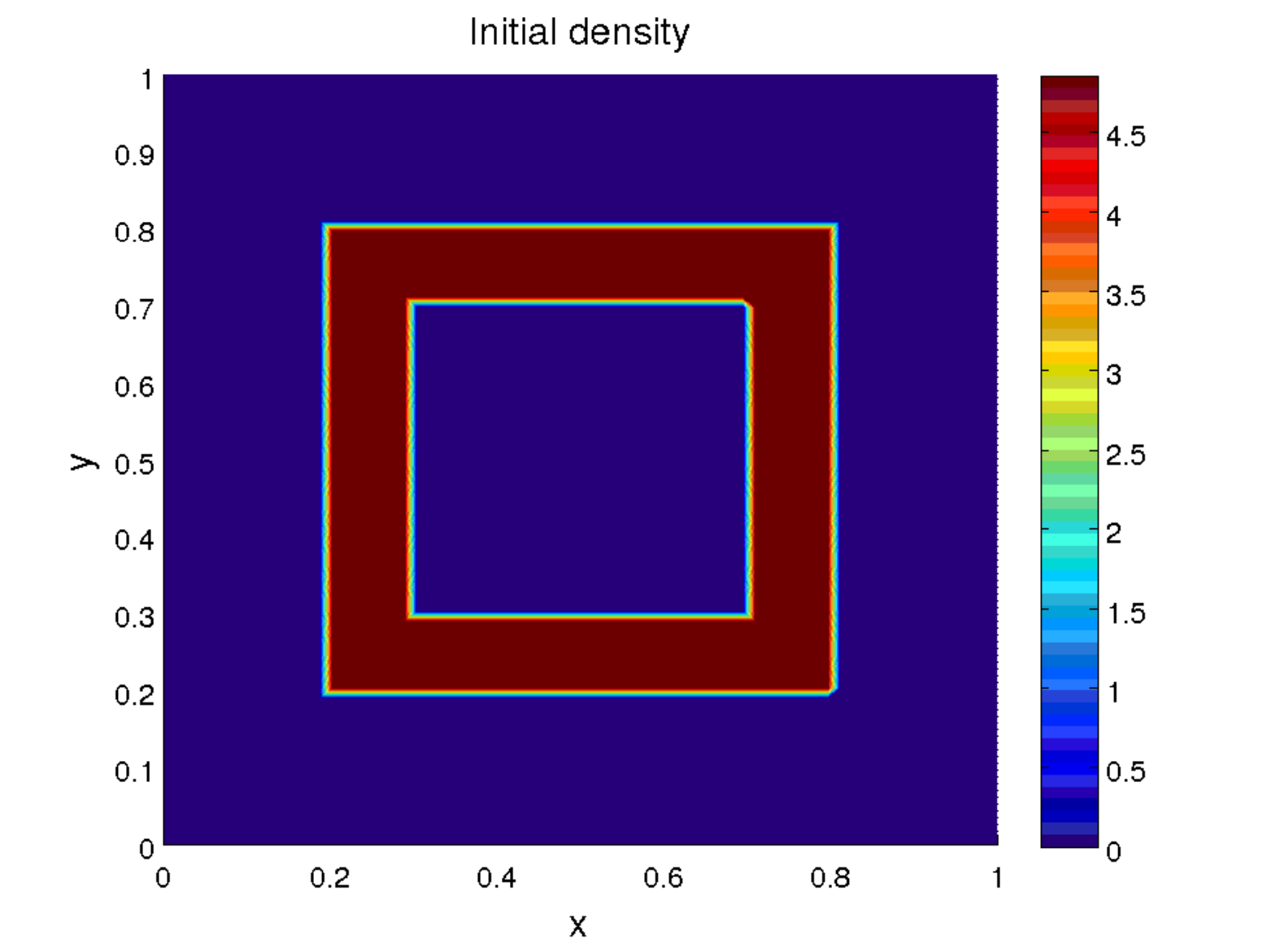}
\includegraphics[width=.51\textwidth]{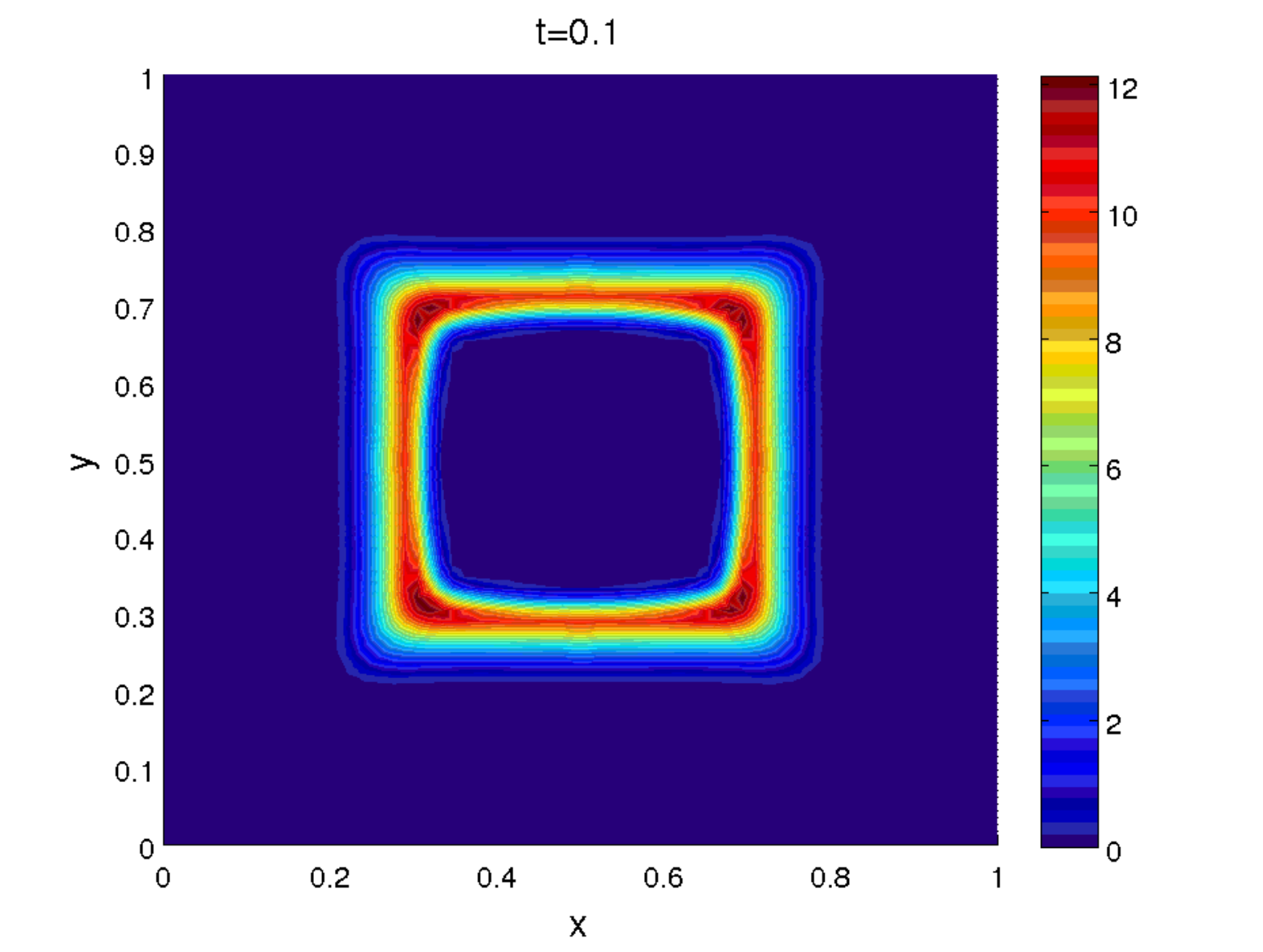}
\includegraphics[width=.51\textwidth]{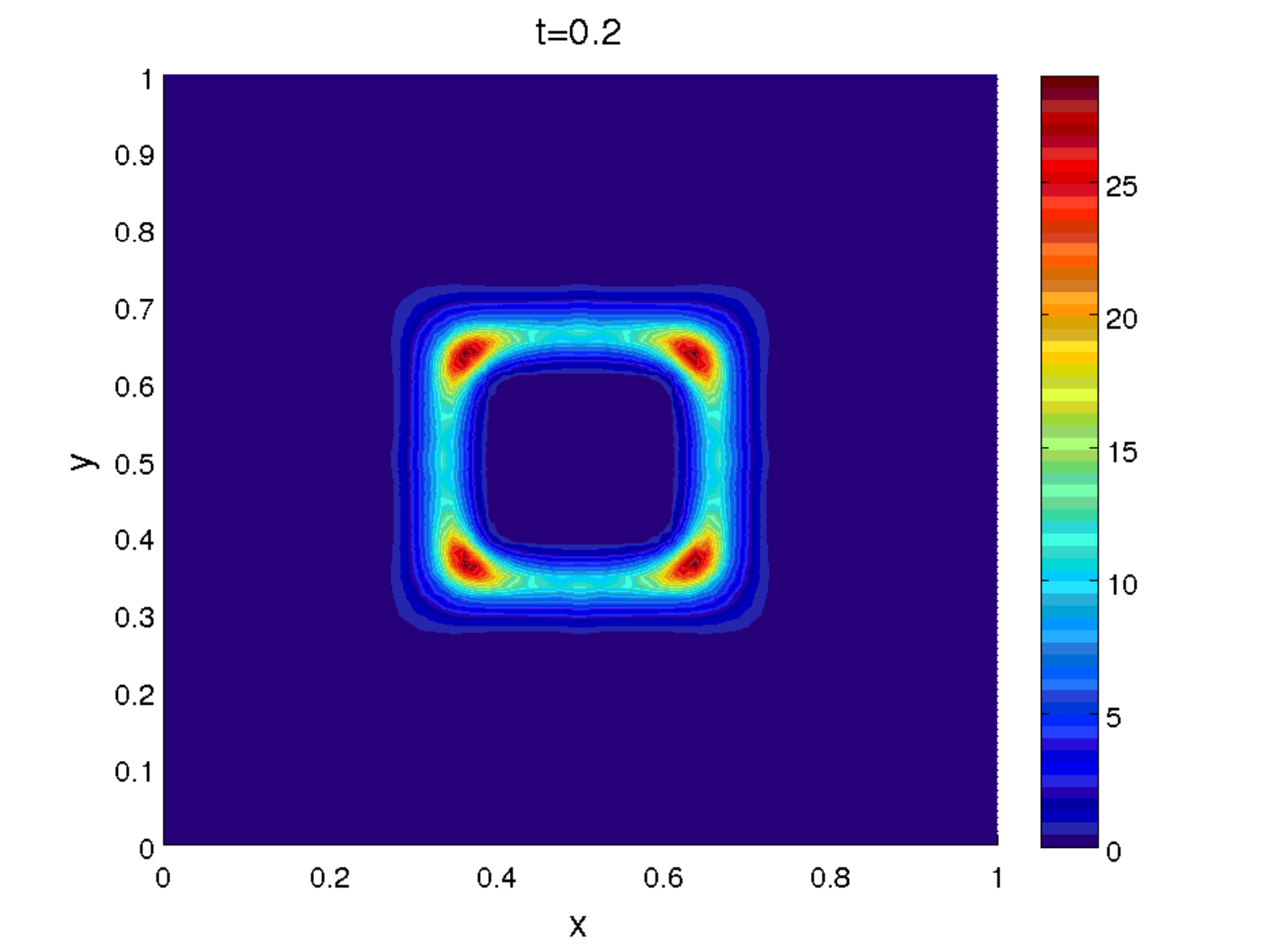}
\includegraphics[width=.51\textwidth]{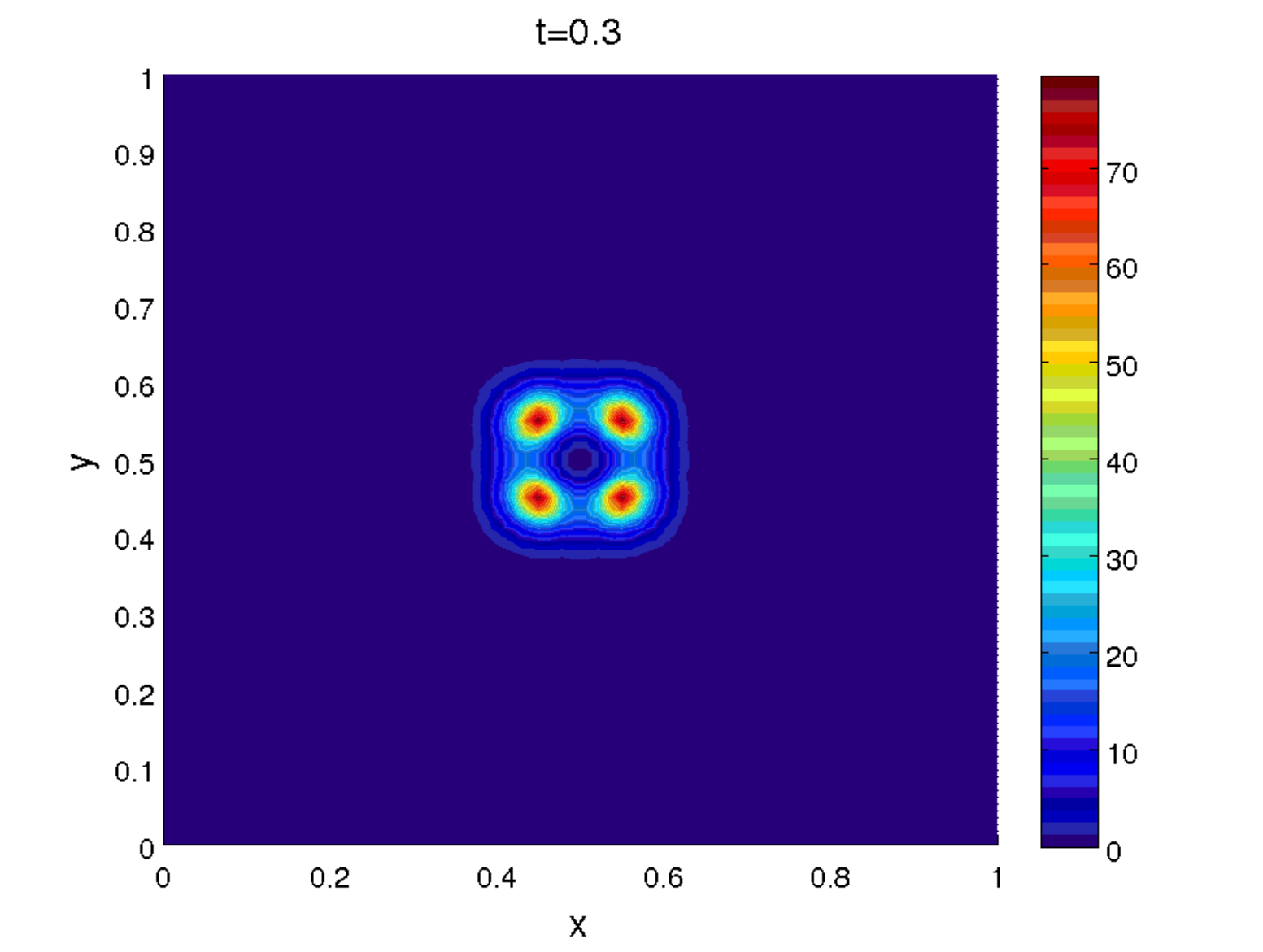}
\includegraphics[width=.51\textwidth]{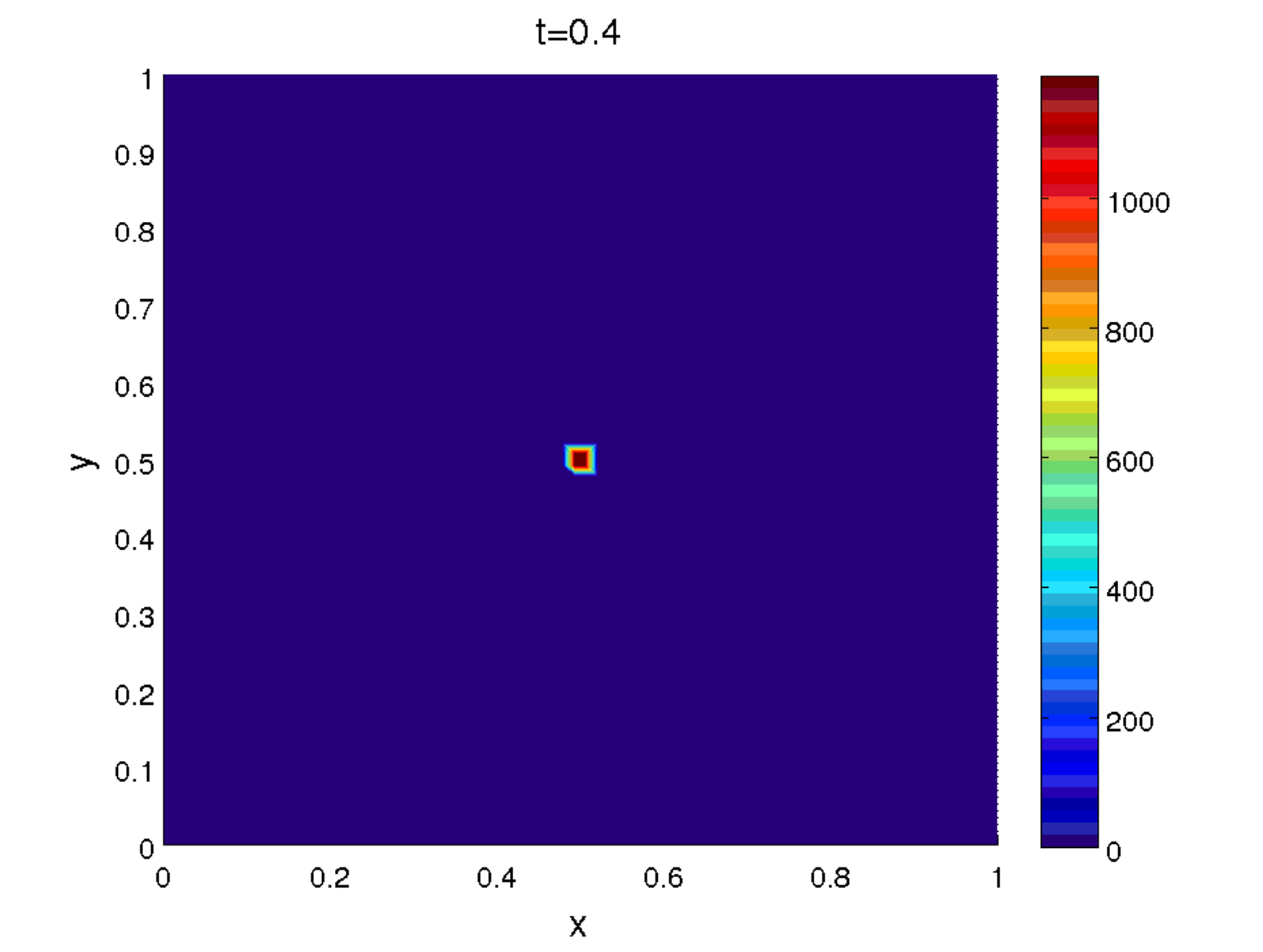}
\includegraphics[width=.51\textwidth]{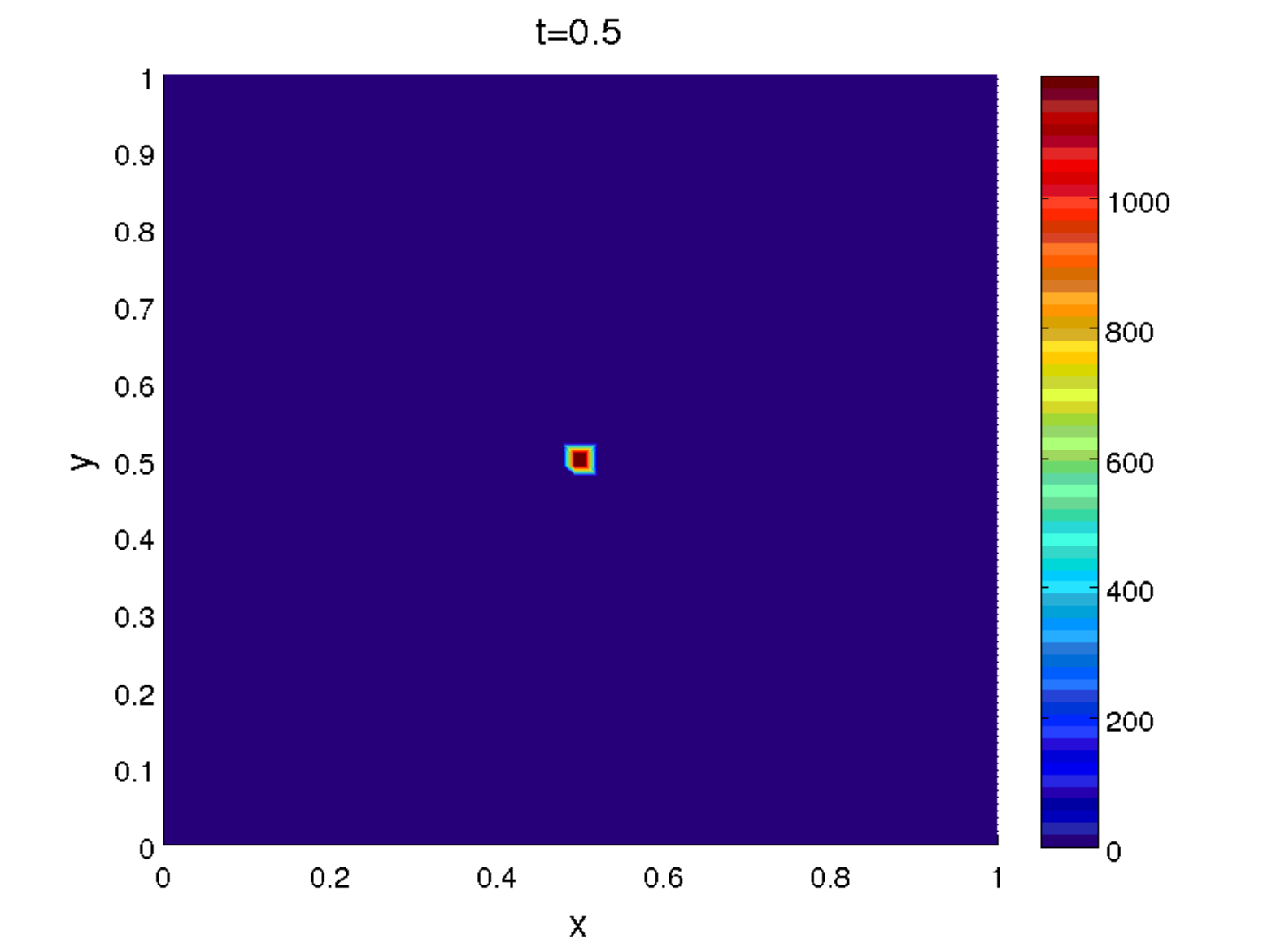}
\caption{Time dynamics of the numerical solution of the aggregation equation 
\eqref{EqInter} with {$W(x)=W_1(x) = 1-e^{-5|x|}$} and an initial datum given by a square.
Time increases from top left to bottom right.}
\label{sq2D}       
\end{figure}

\begin{figure}[ht!]
\includegraphics[width=.51\textwidth]{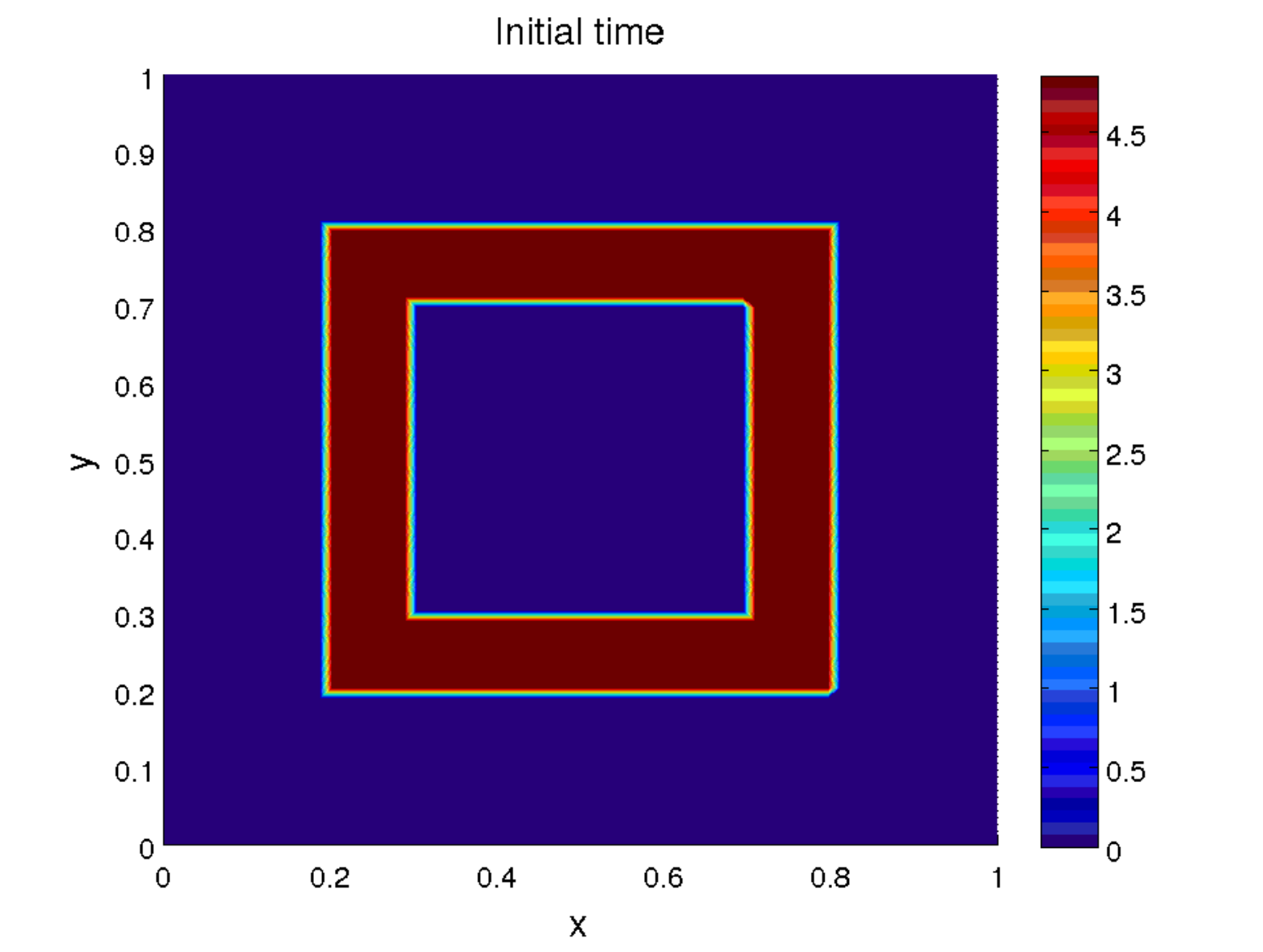}
\includegraphics[width=.51\textwidth]{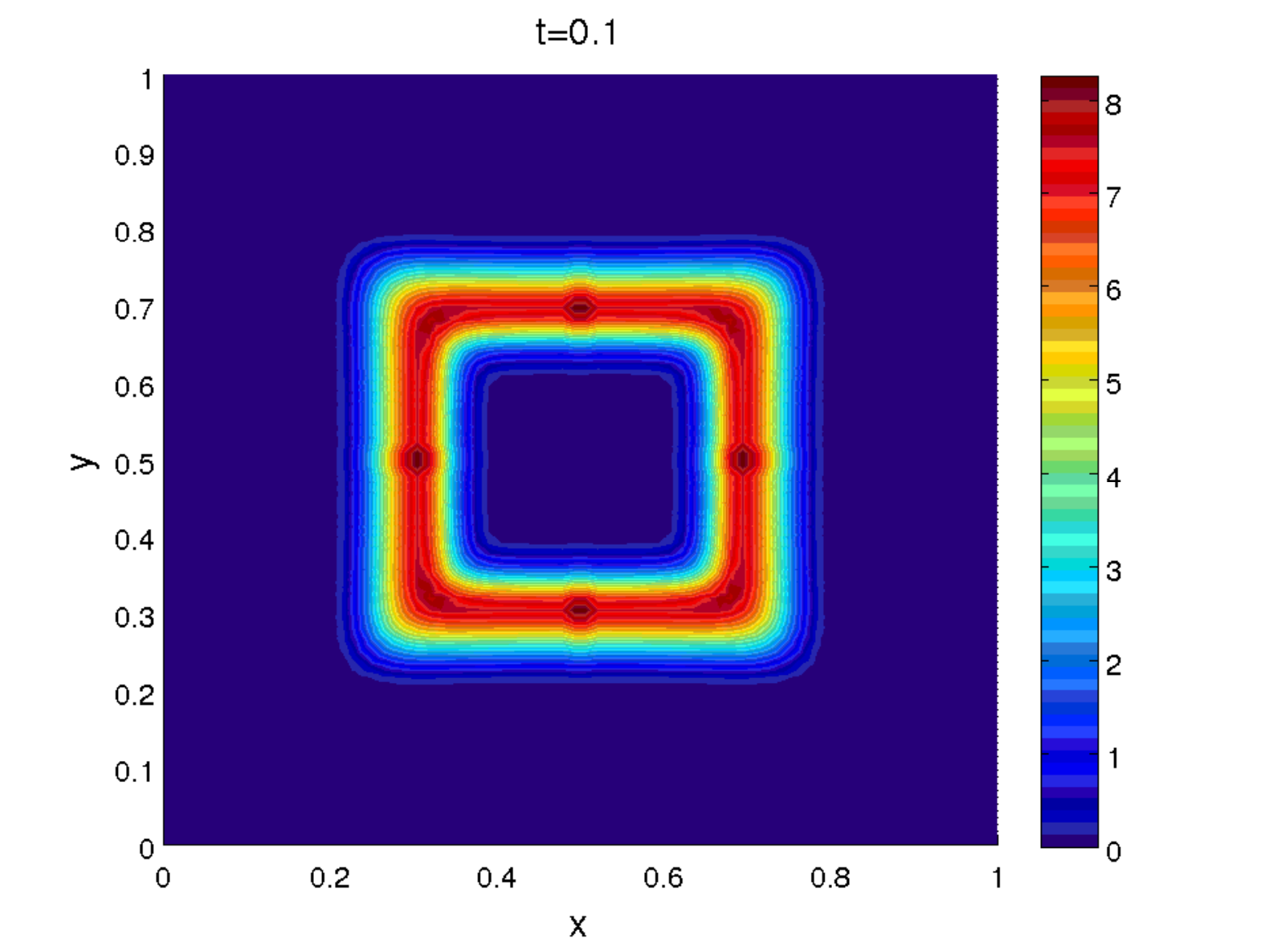}
\includegraphics[width=.51\textwidth]{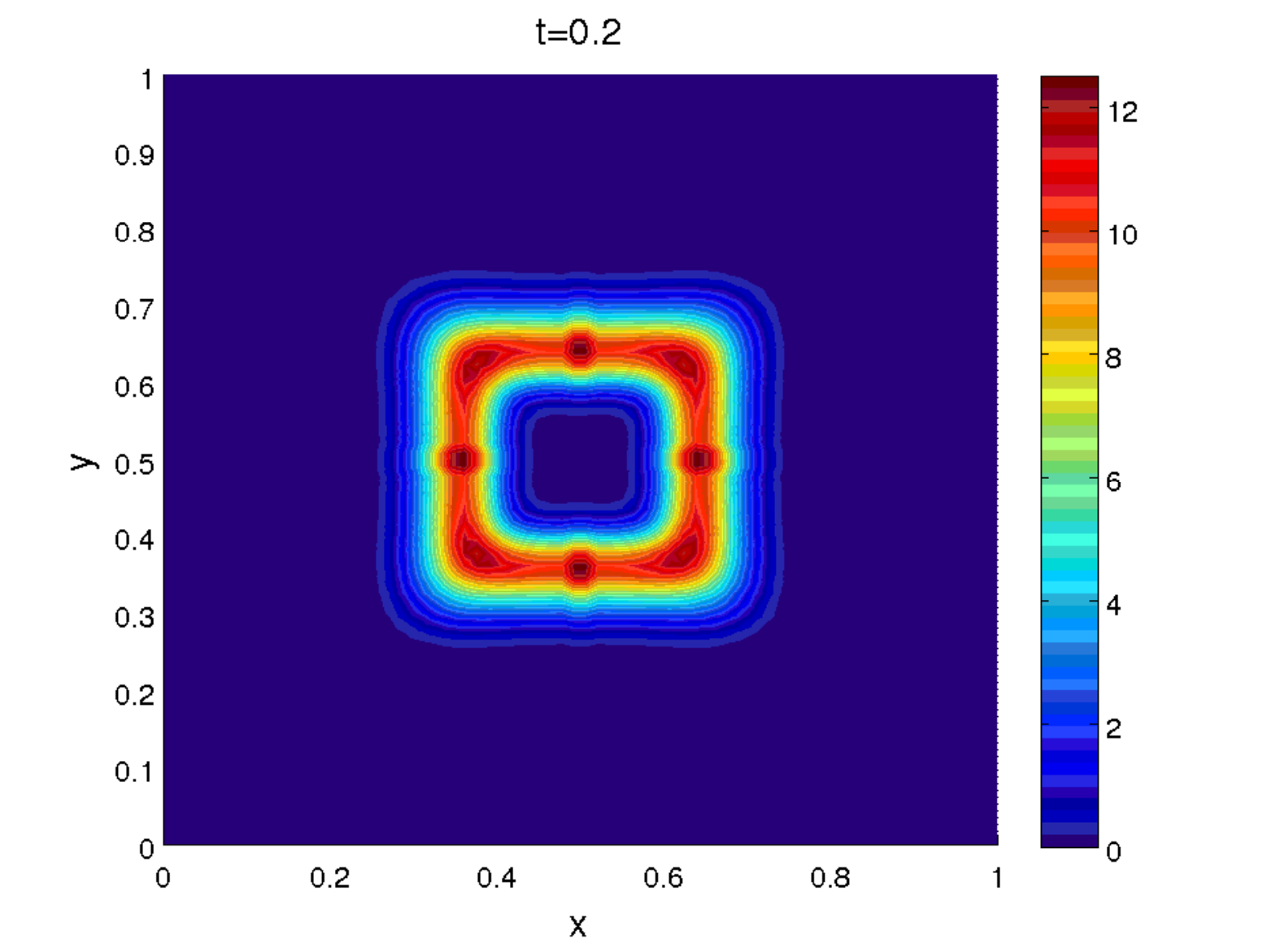}
\includegraphics[width=.51\textwidth]{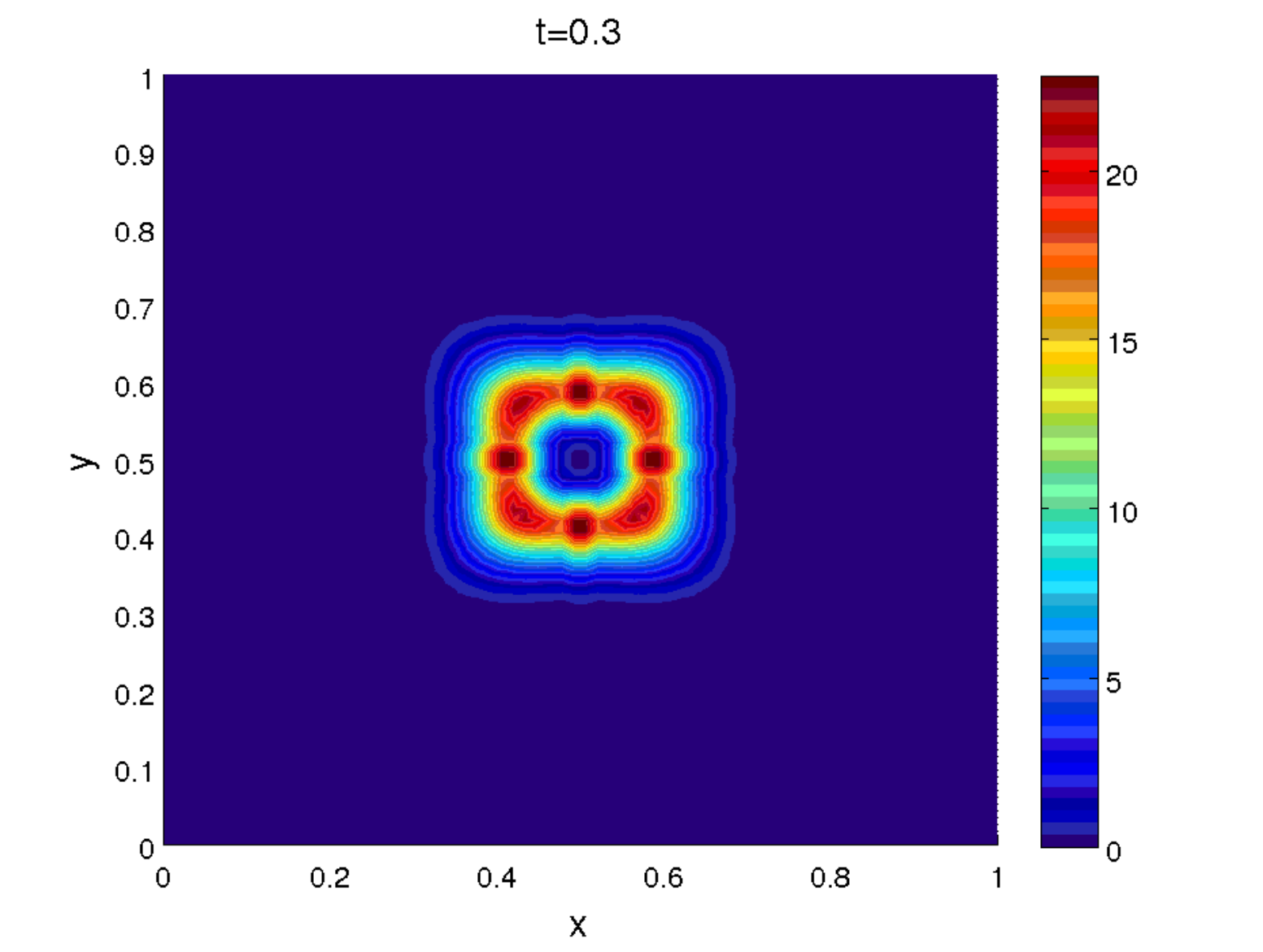}
\includegraphics[width=.51\textwidth]{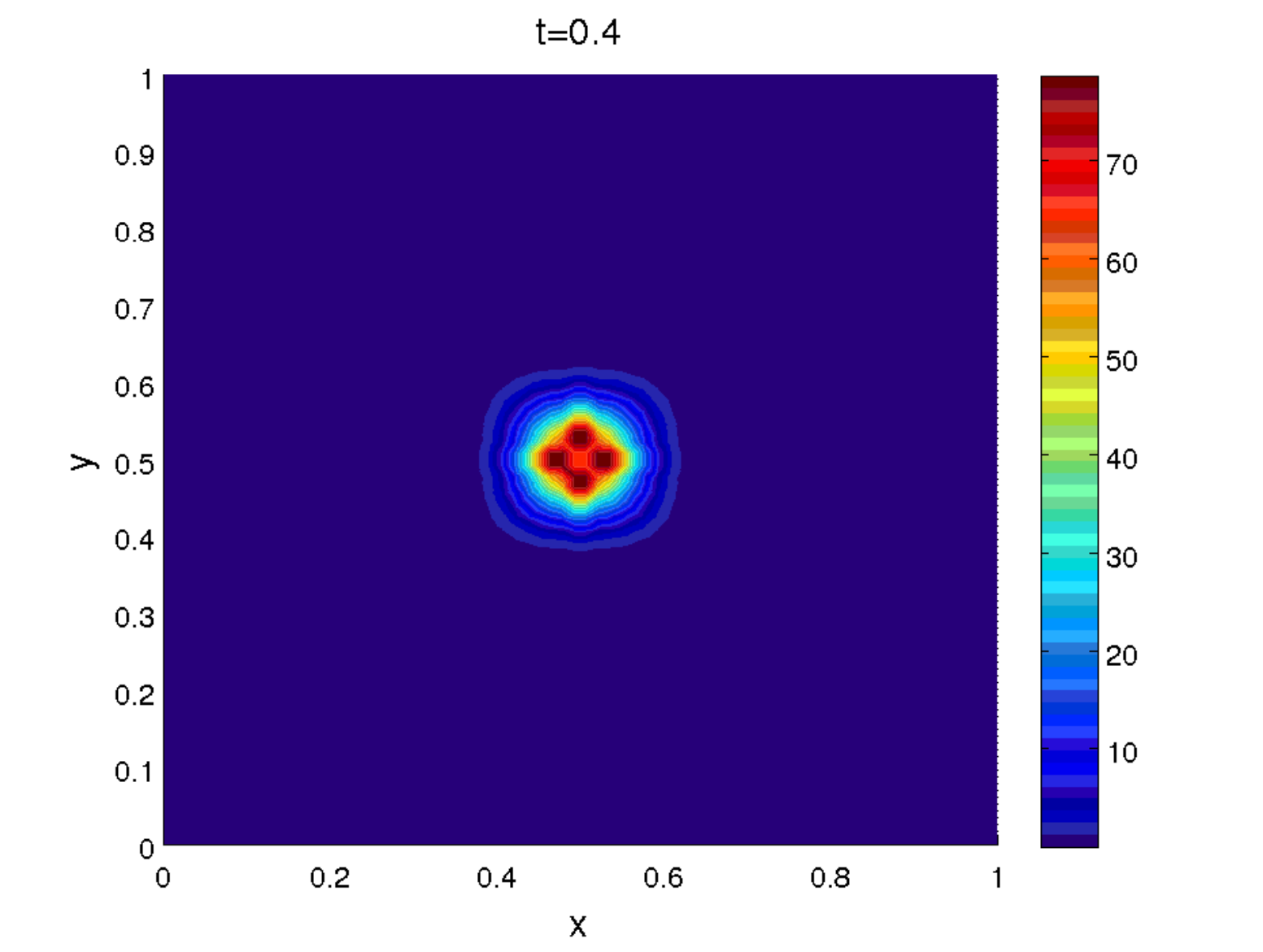}
\includegraphics[width=.51\textwidth]{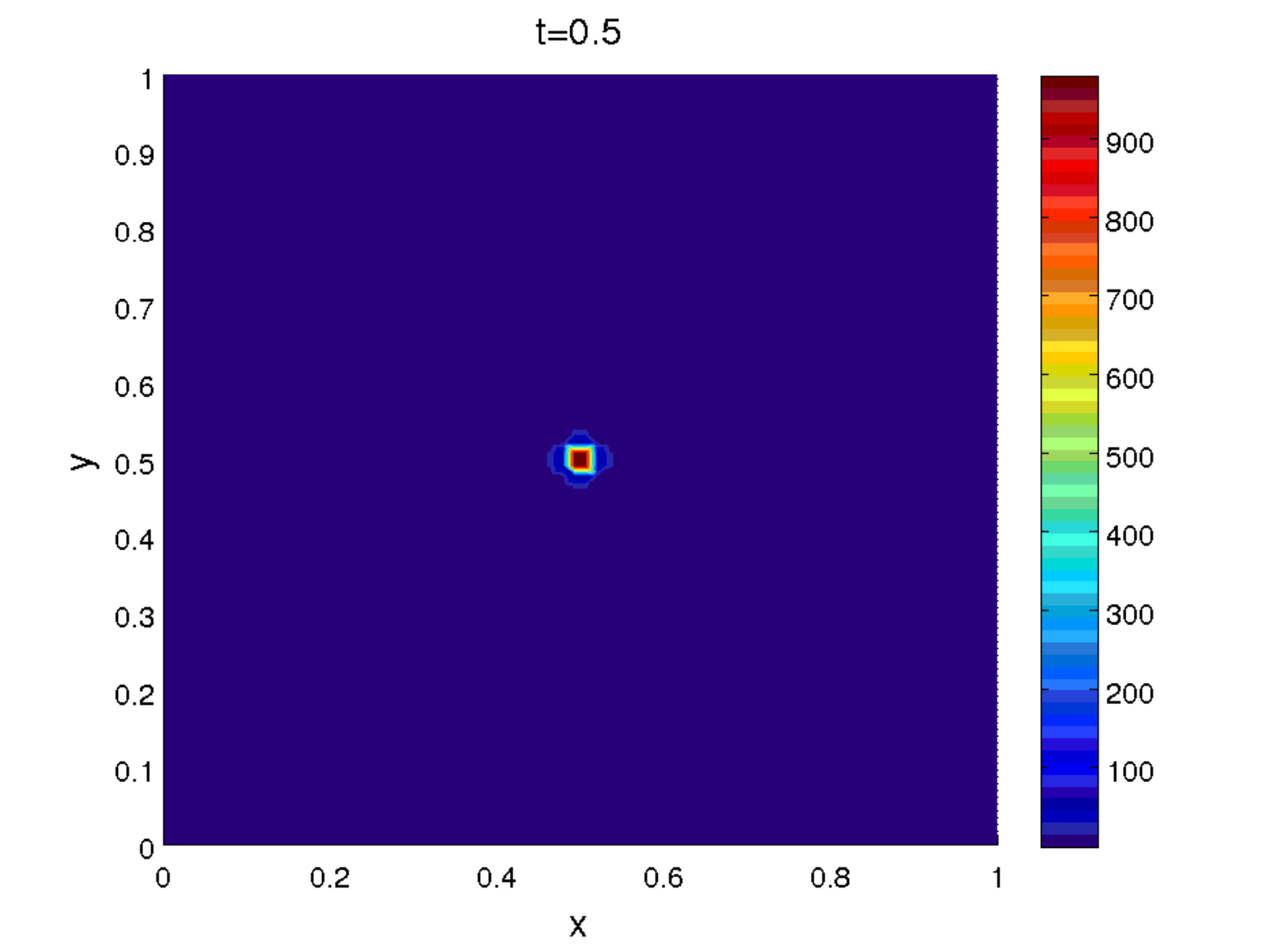}
\caption{Time dynamics of the numerical solution of the aggregation equation 
\eqref{EqInter} with {$W(x)=W_2(x) = 5|x|$} and an initial datum given by a square.
Time increases from top left to bottom right.}
\label{sq2Dbis}       
\end{figure}

As an illustration, we propose now a numerical example in two dimensions.
The spatial domain is the square $[0,1]\times[0,1]$; it
is discretized with $N_x=70$ nodes in the $x$-direction and $N_y=70$
nodes in the $y$-direction; we take a time step $\Delta t=10^{-3}$.
We consider two different initial data: the sum of three bumps
(as in \cite{CJLV})
\begin{equation*}
\begin{split}
&\rho^{ini}(t,x)
\\
&\hspace{5pt} = \frac{1}{M}\left(e^{-100((x_1-0.25)^2+(x_2-0.3)^2)}+e^{-100((x_1-0.77)^2+(x_2-0.7)^2)}
+ 0.9 e^{-100((x_1-0.37)^2+(x_2-0.62)^2)}\right),
\end{split}
\end{equation*}
where $M$ is a normalization constant such that $\|\rho^{ini}\|_{L^1}=1$; 
and an initial density with a square shape
$$
\rho^{ini}(t,x)=5\times\mathbf{1}_{[0.2,0.8]\times[0.2,0.8]\setminus [0.3,0.7]\times[0.3,0.7]}.
$$
With these numerical data, we compare the numerical results between the two potentials
{$W_1(x)=1-e^{-5|x|}$ and $W_2(x)=5|x|$.}
For $|x|$ close to $0$, we have that $\nabla W_1 \sim \nabla W_2$. 
Then the short range interaction is similar between both potentials,
but the long range interaction is different.
The numerical results are displayed in Figures \ref{bump2D} and \ref{sq2D} for the potential 
{$W_1(x)=1-e^{-5|x|}$} and in Figures \ref{bump2Dbis} and \ref{sq2Dbis} for the potential
{$W_2(x)=5|x|$}.

In each case, we observe, as expected, the aggregation in finite time of $\rho$ towards a Dirac delta.
Indeed it has been proved in \cite{Carrillo} that when the initial data is compactly supported,
solutions converge towards a Dirac delta in finite time.
We also observe that the time dynamics during this step of concentration is different between 
potentials $W_1$ and $W_2$.

The case with an initial datum with three bumps has been implemented in \cite{CJLV} 
with a Lax-Friedrichs scheme. We obtain here similar results but we observe 
a smaller numerical diffusion. Then we can make similar comments for the comparison
between the two potentials $W_1$ and $W_2$.
For the potential $W_1$, we observe that each bump coalesces into a Dirac delta,
then the three remaining Dirac deltas merge into a single Dirac delta (see Fig~\ref{bump2D}).
For the potential $W_2$, the solution seems to be more regular and Dirac deltas seems to 
appear for larger time (see Fig~\ref{bump2Dbis}).

For the initial data with a square shape,
the density $\rho$ keeps, 
for both potentials,
a shape similar to the initial square shape which tightens
as time increases. However with the potential $W_1$ (Fig~\ref{sq2D}), we notice
a strong concentration at the corners of the square, whereas in the case of the potential $W_2$
(Fig~\ref{sq2Dbis}) the density  is homogeneous along the edges of the square 
with a slight concentration in the middle of the edges.

\bigskip
{\bf Acknowledgements.}
The authors acknowledge partial support from the french ``ANR blanche" project Kibord~: ANR-13-BS01-0004, as well as from the ``BQR Acceuil EC 2017'' grant from Universit\'e Lyon 1. 

\bigskip

%
%


\begin{thebibliography}{99}
\bs

\bibitem{AubinCellina} J.-P. Aubin, A. Cellina, {\bf Differential inclusions. Set-valued maps and viability theory}. Grundlehren der Mathematischen Wissenschaften [Fundamental Principles of Mathematical Sciences], {\bf 264}. Springer-Verlag, Berlin, 1984.

\bibitem{Ambrosio} L. Ambrosio, N. Gigli, G. Savar\'e, {\bf Gradient flows in metric space of probability measures}, Lectures in Mathematics, Birk\"auser, 2005

\bibitem{benedetto} D. Benedetto, E. Caglioti, M. Pulvirenti, {\it A kinetic equation for granular media}, RAIRO Model. Math. Anal. Numer., {\bf 31} (1997), 615-641.


\bibitem{Andrea_c_toaa} A.L. Bertozzi, J.B. Garnett, T. Laurent, {\it Characterization of radially symmetric finite time blowup in multidimensional aggregation equations}, SIAM J. Math. Anal. {\bf 44}(2) (2012) 651--681.

\bibitem{Bertozzi2} A.L. Bertozzi, T. Laurent, J. Rosado, {\it $L^p$ theory for the multidimensional aggregation equation}, Comm. Pure Appl. Math., {\bf 64} (2011), no 1, 45--83.

\bibitem{Bianchini} S. Bianchini, M. Gloyer, {\it An estimate on the flow generated by monotone operators}, Comm. Partial Diff. Eq., {\bf 36} (2011), no 5, 777--796.

\bibitem{bobkov:ledoux}
S. Bobkov, M. Ledoux, {\it One-dimensional empirical measures, order statistics, and Kantorovich transport distances}, to appear in Memoirs of AMS.  

\bibitem{BV} M. Bodnar, J.J.L. Vel\'azquez, {\it An integro-differential equation arising as a limit of individual cell-based models}, J. Differential Equations {\bf 222} (2006), no 2, 341--380.

\bibitem{bonaschi} G.A. Bonaschi, J.A. Carrillo, M. Di Francesco, M.A. Peletier, {\it Equivalence of gradient flows and entropy solutions for singular nonlocal interaction equations in 1D}, ESAIM Control Optim. Calc. Var. {\bf 21} (2015), no 2, 414--441.

\bibitem{bouche} D. Bouche, J.-M. Ghidaglia, F. Pascal, {\it Error estimate and the geometric corrector for the upwind finite volume method applied to the linear advection equation}, SIAM J. Numer. Anal. {\bf 43} (2005), no 2, 578--603.

\bibitem{bj1} F. Bouchut, F. James, {\it One-dimensional transport equations with discontinuous coefficients}, Nonlinear Analysis TMA, {\bf 32}  (1998), no 7, 891--933.

\bibitem{Freda} M. Campos Pinto, J.A. Carrillo, F. Charles, Y.-P. Choi, {\it Convergence of a linearly transformed particle method for aggregation equations}, preprint, \texttt{https://hal.archives-ouvertes.fr/hal-01180687}.


\bibitem{CCH} J.A. Carrillo, A. Chertock, Y. Huang, {\it A Finite-Volume Method for Nonlinear Nonlocal Equations with a Gradient Flow Structure}, Comm. in Comp. Phys. {\bf 17} (2015), no 1, 233--258.

\bibitem{Carrillo} J.A. Carrillo, M. DiFrancesco, A. Figalli, T. Laurent, D. Slep\v{c}ev, {\it Global-in-time weak measure solutions and finite-time aggregation for nonlocal interaction equations}, Duke Math. J. {\bf 156} (2011), 229--271.

\bibitem{CJLV} J.A. Carrillo, F. James, F. Lagouti\`ere, N. Vauchelet,  {\it The Filippov characteristic flow for the aggregation equation with mildly singular potentials}, J. Differential Equations. {\bf 260} (2016), no 1, 304--338. 

\bibitem{CCV} J.A. Carrillo, R.J. McCann, C. Villani, {\it Contractions in the 2-Wasserstein length space and thermalization of granular media}, Arch. Rational Mech. Anal. {\bf 179} (2006), 217--263.

\bibitem{pieton} R.M. Colombo, M. Garavello, M. L\'ecureux-Mercier, {\it A class of nonlocal models for pedestrian traffic}, Math. Models Methods Appl. Sci., {\bf 22} (2012), no 4:1150023, 34.

\bibitem{CB} K. Craig, A.L. Bertozzi, {\it A blob method for the aggregation equation}, Math of Comp {\bf 85} (2016), no 300, 1681--1717.

\bibitem{pieton2} G. Crippa, M. L\'ecureux-Mercier, {\it Existence and uniqueness of measure solutions for a system of continuity equations with non-local flow}, NoDEA Nonlinear Differential Equations Appl., (2013) {\bf 20} (2013), no 3, 523--537.

\bibitem{caniveau} F. Delarue, F. Lagouti\`ere, {\it Probabilistic analysis of the upwind scheme for transport equations}, Arch. Rational Mech. Anal. {\bf 199} (2011), 229--268.

\bibitem{DLV} F. Delarue, F. Lagouti\`ere, N. Vauchelet, {\it Analysis of finite volume upwind scheme for transport equation with discontinuous coefficients}. Accepted for publication in J. Math. Pures Appliqu\'ees. 

\bibitem{Despres} B. Despr\'es, {\it An explicit a priori estimate for a finite volume approximation of linear advection on non-Cartesian grid}, SIAM J. Numer. Anal. {\bf 42} (2004), no 2, 484--504.

\bibitem{dob} R. Dobrushin, {\it Vlasov equations}, Funct. Anal. Appl. {\bf 13} (1979), 115–123.

\bibitem{dolschmeis} Y. Dolak, C. Schmeiser, {\it Kinetic models for
chemotaxis: Hydrodynamic limits and spatio-temporal mechanisms},
J. Math. Biol., {\bf 51} (2005), 595--615.

\bibitem{filblaurpert} F. Filbet, P. Lauren\c{c}ot, B. Perthame, {\it Derivation of hyperbolic models for chemosensitive movement}, J. Math. Biol., {\bf 50} (2005), 189--207.

\bibitem{Filippov} A.F. Filippov, {\it Differential Equations with Discontinuous Right-Hand Side}, A.M.S. Transl. (2) {\bf 42} (1964), 199--231.

\bibitem{Golse} F. Golse, {\it On the Dynamics of Large Particle Systems in the Mean Field Limit}. In: A. Muntean, J. Rademacher, A. Zagaris (eds), Macroscopic and Large Scale Phenomena: Coarse Graining, Mean Field Limits and Ergodicity. Lecture Notes in Applied Mathematics and Mechanics, vol 3. Springer, Cham, 2016. 

\bibitem{GJ} L. Gosse, F. James, {\it Numerical approximations of one-dimensional linear conservation equations with discontinuous coefficients}, Math. Comput. {\bf 69} (2000) 987--1015.

\bibitem{GT} L. Gosse, G. Toscani, {\it Identification of Asymptotic Decay to Self-Similarity for One-Dimensional Filtration Equations}, SIAM J. Numer. Anal. {\bf 43} (2006) 2590--2606.

\bibitem{sisc} L. Gosse, N. Vauchelet, {\it Numerical high-field limits in two-stream kinetic models and 1D aggregation equations}, SIAM J. Sci. Comput. {\bf 38} (2016), no 1, A412--A434.

\bibitem{Legloc} T.Y. Hou, P.G. LeFloch, {\it Why nonconservative schemes converge to wrong solutions: error analysis}, Math. Comp. {\bf 62} (1994), no 206, 497--530. 

\bibitem{Huang1} Y. Huang, A.L. Bertozzi, {\it Asymptotics of blowup solutions for the aggregation equation}, Discrete and Continuous Dynamical Systems - Series B, {\bf 17} (2012), 1309--1331.

\bibitem{Huang2} Y. Huang, A.L. Bertozzi, {\it Self-similar blowup solutions to an aggregation equation in $\RR^n$}, SIAM Journal on Applied Mathematics, {\bf 70} (2010), 2582--2603.

\bibitem{NoDEA} F. James, N. Vauchelet, {\it Chemotaxis: from kinetic equations to aggregation dynamics}, Nonlinear Diff. Eq. and Appl. (NoDEA), {\bf 20} (2013), no 1, 101--127.

\bibitem{GF_dual} F. James, N. Vauchelet, {\it Equivalence between duality and gradient flow solutions for one-dimensional aggregation equations}, Disc. Cont. Dyn. Syst., {\bf 36} (2016), no 3, 1355--1382.

\bibitem{sinum} F. James, N. Vauchelet, {\it Numerical method for one-dimensional
    aggregation equations}, SIAM J. Numer. Anal. {\bf 53} (2015), no 2, 895--916.

\bibitem{keller} E.F. Keller, L.A. Segel, {\it Initiation of slime mold aggregation viewed as an instability}, J. Theor. Biol., {\bf 26} (1970), 399--415.

\bibitem{Kuznetsov} N.N. Kuznetsov, {\it The accuracy of certain approximate methods for the computation of weak solutions of a first order quasilinear equation}, \v{Z}. Vy\v{c}isl. Mat. i Mat. Fiz. {\bf 16} (1976), no 6, 1489--1502.

\bibitem{lava} F. Lagouti\`ere, N. Vauchelet, {\it Analysis and simulation of nonlinear and nonlocal transport equation}, to appear in Innovative algorithms and analysis, Springer INdAM Series 16, L. Gosse and R. Natalini Ed, 2016. 

\bibitem{Li} H. Li, G. Toscani, {\it Long time asymptotics of kinetic models of granular flows}, Arch. Rat. Mech. Anal., {\bf 172} (2004), 407--428.

\bibitem{M} B. Merlet, {\it $L^\infty$- and $L^2$-error estimates for a finite volume approximation of linear advection}, SIAM J. Numer. Anal. {\bf 46} (2007), no 1, 124--150.

\bibitem{MV} B. Merlet, J. Vovelle, {\it Error estimate for finite volume scheme}, Numer. Math. {\bf 106} (2007), 129--155.

\bibitem{morale} D. Morale, V. Capasso, K. Oelschl\"ager, {\it An interacting particle system modelling aggregation behavior: from individuals to populations}, J. Math. Biol., {\bf 50} (2005), 49--66.

\bibitem{okubo} A. Okubo, S. Levin, {\bf Diffusion and Ecological Problems: Modern Perspectives}, Springer, Berlin, 2002.

\bibitem{patlack} C.S. Patlak, {\it Random walk with persistence and external bias}, Bull. Math. Biophys., {\bf 15} (1953), 311-338.


\bibitem{PoupaudRascle} F. Poupaud, M. Rascle, {\it Measure solutions to the linear multidimensional transport equation with discontinuous coefficients}, Comm. Partial Diff. Equ., {\bf 22} (1997), 337--358.

\bibitem{rachev} S.T. Rachev and L. R\"uschendorf, {\bf Mass Transportation Problems. Vol. I. Theory}, Probab. Appl. (N. Y.), Springer-Verlag, New York, 1998.

\bibitem{Filippo c touo} F. Santambrogio, {\bf Optimal transport for applied mathematicians. Calculus of variations, PDEs, and modeling}. Progress in Nonlinear Differential Equations and their Applications, 87. Birkh\"auser/Springer, Cham, 2015. 

\bibitem{schlichting} A. Schlichting, C. Seis, {\it Convergence rates for upwind schemes with rough coefficients}, SIAM J. Numer. Anal. {\bf 55} (2017), no 2, 812--840.

\bibitem{topaz} C.M. Topaz, A.L. Bertozzi, {\it Swarming patterns in a two-dimensional kinematic model for biological groups}, SIAM J. Appl. Math. {\bf 65} (2004), 152--174.

\bibitem{Toscani} G. Toscani, {\it Kinetic and hydrodynamic models of nearly elastic granular flows}, Monatsh. Math. {\bf 142} (2004), 179--192.

\bibitem{Villani1} C. Villani, {\bf Optimal transport, old and new}, Grundlehren der Mathematischen Wissenschaften 338, Springer, 2009.

\bibitem{Villani2} C. Villani, {\bf Topics in optimal transportation}, Graduate Studies in Mathematics {\bf 58}, Amer. Math. Soc, Providence, 2003.




\end{thebibliography}


\end{document}